\author{Jeffrey S. Meyer}
\address{Department of Mathematics\\
University of Oklahoma\\
Norman, OK 73019 USA}
\email[]{jmeyer@math.ou.edu}
\title{Totally Geodesic Spectra of Arithmetic Hyperbolic Spaces}
\DeclareMathAlphabet{\curly}{U}{rsfs}{m}{n}
\def\@tocline#1#2#3#4#5#6#7{\relax
  \ifnum #1>\c@tocdepth 
  \else
    \par \addpenalty\@secpenalty\addvspace{#2}%
    \begingroup \hyphenpenalty\@M
    \@ifempty{#4}{%
      \@tempdima\csname r@tocindent\number#1\endcsname\relax
    }{%
      \@tempdima#4\relax
    }%
    \parindent\z@ \leftskip#3\relax \advance\leftskip\@tempdima\relax
    \rightskip\@pnumwidth plus4em \parfillskip-\@pnumwidth
    #5\leavevmode\hskip-\@tempdima
      \ifcase #1
       \or\or \hskip 4em \or \hskip 2em \else \hskip 3em \fi%
      #6\nobreak\relax
    \dotfill\hbox to\@pnumwidth{\@tocpagenum{#7}}\par
    \nobreak
    \endgroup
  \fi}
\numberwithin{figure}{section}
\numberwithin{equation}{section}
\numberwithin{table}{section}
\newcommand{\specialcell}[2][c]{\begin{tabular}[#1]{@{}l@{}}#2\end{tabular}}
\DeclareFontFamily{OT1}{rsfs}{}
\DeclareFontShape{OT1}{rsfs}{n}{it}{<-> rsfs10}{}
\DeclareMathAlphabet{\mathscr}{OT1}{rsfs}{n}{it}
\DeclareMathOperator{\N}{N}
\newtheorem{thm}[subsection]{Theorem}
\newtheorem*{thm*}{Theorem}
\newtheorem{mainthm}{Theorem}
\newtheorem{ques}{Question}
\newtheorem{lem}[subsection]{Lemma}
\newtheorem{cor}[subsection]{Corollary}
\newtheorem*{cor*}{Corollary}
\newtheorem{prop}[subsection]{Proposition}
\theoremstyle{definition}
\newtheorem{rem}[subsection]{Remark}
\newtheorem{ex}[subsection]{Example}
\newtheorem{Def}[subsection]{Definition}
\newtheorem{con}[subsection]{Construction}
\def\C{\mathbb{C}}
\def\Q{\mathbf{Q}}
\def\R{\mathbb{R}}
\def\N{\mathbf{N}}
\def\Q{\mathbb{Q}}
\def\Z{\mathbb{Z}}
\def\1{\mathbf{1}}
\def\moverlay{\mathpalette\mov@rlay}
\def\mov@rlay#1#2{\leavevmode\vtop{%
   \baselineskip\z@skip \lineskiplimit-\maxdimen
   \ialign{\hfil$\m@th#1##$\hfil\cr#2\crcr}}}
\newcommand{\charfusion}[3][\mathord]{
    #1{\ifx#1\mathop\vphantom{#2}\fi
        \mathpalette\mov@rlay{#2\cr#3}
      }
    \ifx#1\mathop\expandafter\displaylimits\fi}
\let\@@pmod\pmod
\DeclareRobustCommand{\pmod}{\@ifstar\@pmods\@@pmod}
\def\@pmods#1{\mkern4mu({\operator@font mod}\mkern 6mu#1)}
\begin{document}

\maketitle

\begin{abstract}
In this paper we show that totally geodesic subspaces determine the commensurability class of a standard arithmetic hyperbolic $n$-orbifold, $n\ge 4$.  
Many of the results are more general and apply to locally symmetric spaces associated to arithmetic lattices in $\R$-simple Lie groups of type $B_n$ and $D_n$.
We use a combination of techniques from algebraic groups and quadratic forms to prove several results about these spaces. 
\end{abstract}

\section{Introduction}\label{sectionintroduction}

The goal of this paper is to determine the extent to which the geometry of an arithmetic hyperbolic $n$-manifold, $n\ge 4$, is encoded in the collection of its totally geodesic submanifolds.  
To put this goal in a broader context, we step back a moment and ask a natural question, one going back over a century: 
\textit{What topological and geometric properties of a space $M$ are encoded in certain interesting collections of geometric data associated to $M$?} 
One of the earliest examples of this line of inquiry was in 1911, when Weyl showed that the eigenvalues of the Laplace-Beltrami operator determine the dimension and volume of a closed Riemannian manifold \cite{W}.    
In 1966, Kac popularized this question by asking ``Can one hear the shape of a drum?'' \cite{Kac}.
Since that time many different collections of data, often called \textbf{spectra}, have been studied.  
Over the past few decades, one prominent spectrum has been the collection of lengths of closed geodesics.
The \textbf{weak length spectrum} (sometimes also referred to as the \textbf{length set}) of a Riemannian manifold $M$, is the set 
\begin{align}\label{lm1}
L(M)	&:=\{\lambda\in \R\ | \ \mbox{$\lambda$ is the length of a closed geodesic in $M$}\}.
\end{align} 
Observe that this collection can be equivalently formulated as 
\begin{align}\label{lm2}
L(M)	&\ = \{\mbox{Isometry classes of closed geodesics in $M$}\}.
\end{align}
We call two manifolds with the same weak length spectrum \textbf{weakly iso-length-spectral}.

\begin{ques} 
Are weakly iso-length-spectral spaces necessarily isometric?
\end{ques}

The answer is a resounding no, and since the 1960's, there have been many constructions of weakly iso-length-spectral spaces that are not isometric, the most famous of which being:
\begin{itemize}
\item 16-dimensional flat tori (Milnor, 1964 \cite{Mi}),
\item 2- and 3-dimensional hyperbolic manifolds, and more generally spaces spaces coming from quaternion algebras (Vign\'eras, 1980 \cite{Vi}),
\item General method based on covering space theory (Sunada, 1985 \cite{S}).
\end{itemize} 
However, these constructions produce manifolds that are \textit{almost} isometric in the sense that they are commensurable (Section \ref{sectionnotation}). 
When two Riemannian manifolds $M_1$ and $M_2$ are commensurable, the length of every closed geodesic in $M_1$ is a rational multiple of the length of a closed geodesic in $M_2$, and vice versa.  Motivated by this, \cite{CHLR} defined the \textbf{rational length spectrum} to be the set   
\begin{align}\label{qlm1}
\Q L(M)	&:=\{s\lambda\in \R\ | \ \mbox{$s\in \Q$ and $\lambda$ is the length of a closed geodesic in $M$}\}.
\end{align}
Again, we observe that this definition may be reformulated as follows:
\begin{align}\label{qlm2}
\Q L(M)	&\ = \{\mbox{Commensurability classes of closed geodesic in $M$}\}.
\end{align}

%

Two manifolds with the same rational length spectrum are said to be \textbf{length-commensurable}.  In particular, commensurable manifolds are length-commensurable.  One may then ask the following refined question. 

\begin{ques}
Are length-commensurable spaces necessarily commensurable?
\end{ques}

In many cases, the answer is yes.
When $M_1$ and $M_2$ are arithmetic hyperbolic $n$-manifolds,  then $\Q L(M_1)=\Q L(M_2)$ implies $M_1$ and $M_2$ are commensurable in each of the following cases:
\begin{itemize}
\item $n=2$ (Reid, 1992 \cite{R}),
\item $n=3$ (Chinburg, Hamilton, Long, and Reid, 2008 \cite{CHLR}),
\item $n\ne3$, $n\ne 7$,\  \ $n\not\equiv 1 \ (\mbox{mod } 4)$ (Prasad and Rapinchuk, 2009 \cite{PR}),
\item $n=7$ (Garibaldi, 2013 \cite{G}).
\end{itemize}
However,  for each positive $n\equiv 1 \ (\mbox{mod } 4)$, $n>1$,  \cite{PR} produced examples of noncommensurable length-commensurable arithmetic hyperbolic $n$-manifolds.  
More generally, there are many constructions of families of pairwise noncommensurable length-commensurable arithmetic locally symmetric spaces of the same Killing--Cartain type (see \cite[Theorem 1]{LSV}, \cite[Construction 9.15]{PR}).

Our motivation is to find a collection of data that is complementary to length spectra and that distinguishes commensurability classes.   
In recent years, there have been many papers looking at certain higher dimensional analogues of geodesics: totally geodesic subspaces. 
For us, it will be sufficient to only consider \textit{nonflat} totally geodesic subspaces.  
Furthermore, in analogy with looking at closed geodesics, we only want to look at \textit{finite volume} subspaces. 
With this in mind, we define the \textbf{totally geodesic set} of a Riemannian manifold to be the set
\begin{align}\label{deftgm}
TG(M):=\left\{ \parbox{2.7in}{\begin{center}Isometry classes of nonflat finite volume totally geodesic subspaces of  $M$\end{center}}\right\}.
\end{align}

McReynolds and Reid \cite{McR} prove that if $M_1$ and $M_2$ are standard \cite[Theorem 10.2.3]{MaR2} arithmetic hyperbolic 3-manifolds such that $TG(M_1)=TG(M_2)$, then $M_1$ and $M_2$ are commensurable.
As was the case for the weak length spectrum, $TG(M)$ is not an invariant of commensurability class, and hence we define the \textbf{rational totally geodesic spectrum}  to be the set
\begin{align}\label{defqtgm}
\Q TG(M):=\left\{ \parbox{3.3in}{\begin{center}Commensurability classes of nonflat finite volume totally geodesic subspaces of  $M$\end{center}}\right\}.
\end{align}
Observe that $TG(M)$ and $\Q TG(M)$ are natural analogues of the second formulations of $L(M)$ and $\Q L(M)$ (see \ref{lm2} and \ref{qlm2}).  
The former is more rigid while the later is an invariant of the commensurability class of $M$.   
If two Riemannian orbifolds $M$ and $M'$ have the same rational totally geodesic spectrum, we say they are \textbf{totally-geodesic-commensurable}.
The goal of this paper is to investigate the following question:

\begin{ques}
Are totally-geodesic-commensurable spaces necessarily commensurable?
\end{ques}

In this paper we address this question in the case of locally symmetric spaces of type $B_n$ and $D_n$.  
In particular, we focus on standard arithmetic locally symmetric spaces associated to Lie groups of the form $\prod_{i=1}^r \mathbf{SO}(p_i,m-p_i) \times (\mathbf{SO}_m(\C))^s$, for $m\ge5$.  
These spaces are constructed via the isometry groups of quadratic forms over number fields (see Construction \ref{quadcon}).   
We call a locally symmetric space $\R$-\textbf{simple} if its associated Lie group is not a nontrivial product (i.e., if $r+s=1$).  
Note that standard arithmetic hyperbolic $n$-manifolds are $\R$-simple.   

The first step to proving that the rational totally geodesic spectrum determines the commensurability class is showing it determines the field of definition, which we do in Section \ref{sectionfieldthrma}

\begin{mainthm}\label{thrmA}
Let $M_1$ and $M_2$ be $\R$-simple, arithmetic, locally symmetric orbifolds coming from quadratic forms of dimension $m\ge 5$ over number fields $k_1$ and $k_2$ respectively.  
If $\Q TG(M_1)= \Q TG(M_2)$, then  $k_1$ and $k_2$ are isomorphic.
\end{mainthm}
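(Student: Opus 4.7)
\textit{Proof plan.} My strategy is to recover $k_i$ from $\Q TG(M_i)$ by extracting the invariant trace field of a carefully chosen low-dimensional totally geodesic subspace.

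The totally geodesic subspaces of $M_i$ arise from nondegenerate $k_i$-subforms $q'$ of $q_i$: for each $k_i$-subspace on which $q_i$ restricts to a form $q'$ of appropriate signature at every archimedean place, the inclusion $\mathbf{SO}(q')\hookrightarrow \mathbf{SO}(q_i)$ produces a standard arithmetic locally symmetric suborbifold $N(q')$. The arithmetic subgroup realizing $N(q')$ sits inside $\mathbf{SO}(q_i)(k_i)\subset \mathrm{GL}_m(k_i)$, so every element's trace belongs to $k_i$; hence the invariant trace field of $N(q')$ embeds as a subfield of $k_i$. This bound is moreover a commensurability invariant of $N(q')$, so it is determined by the class $[N(q')]\in \Q TG(M_i)$.

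Because $m\ge 5$ and $M_i$ is $\R$-simple of type $B_n$ or $D_n$, I can choose a $3$-dimensional subform $q'$ of signature $(2,1)$ at the noncompact real place, or in the complex case a $3$- or $4$-dimensional subform yielding a hyperbolic $3$-orbifold. The corresponding $N(q')$ is an arithmetic hyperbolic $2$- or $3$-orbifold, and by Reid \cite{R} (respectively Chinburg--Hamilton--Long--Reid \cite{CHLR}) the commensurability class of $N(q')$ determines its invariant trace field. A genericity argument then shows that for an appropriate choice of $q'$, the arithmetic subgroup of $\mathbf{SO}(q')(\mathcal{O}_{k_i})$ is Zariski dense and contains elements whose traces generate $k_i$ over $\Q$, so the invariant trace field of $N(q')$ equals $k_i$.

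Combining these steps, $k_i$ is characterized intrinsically from $\Q TG(M_i)$ as the (up to isomorphism) unique maximal field appearing as an invariant trace field of an element of $\Q TG(M_i)$: all such trace fields embed into $k_i$ by the first step, and $k_i$ itself is realized by the second. From $\Q TG(M_1)=\Q TG(M_2)$ these collections of abstract trace fields coincide, so their maximal elements agree and $k_1\cong k_2$. The main obstacle I expect is the genericity argument at the end of the second step: producing a $3$-dimensional subform $q'$ whose arithmetic subgroup has invariant trace field exactly $k_i$ rather than some proper subfield, which requires verifying Zariski density in $\mathbf{SO}(q')$ and constructing enough elements of the arithmetic subgroup whose traces collectively generate $k_i$ over $\Q$, ruling out any descent of $q'$ to a proper subfield of $k_i$.
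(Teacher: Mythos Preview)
Your opening assertion that ``the totally geodesic subspaces of $M_i$ arise from nondegenerate $k_i$-subforms $q'$ of $q_i$'' is the crux of the gap. This holds for standard arithmetic \emph{hyperbolic} orbifolds (Proposition~\ref{subformsubspaces}), but Theorem~\ref{thrmA} is stated for arbitrary $\R$-simple spaces coming from quadratic forms, which includes higher-rank signatures such as $(p,m-p)$ with $p\ge 2$. In that generality, $\Q TG(M_i)$ can contain classes whose field of definition strictly contains $k_i$; the paper gives exactly such a construction in the Killing-form example of Section~\ref{sectionfieldthrma}, where a $\Q$-form $M$ contains a totally geodesic $N$ with $k(N)=k\supsetneq\Q=k(M)$. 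Consequently your ``maximal field'' characterization of $k_i$ fails: you have not shown that every element of $\Q TG(M_i)$ has trace field inside $k_i$, only that subform subspaces do.

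Ironically, the part you flag as the main obstacle (genericity) is not one: for any $3$-dimensional subform $q'$, the group $\mathbf{SO}(q')$ is absolutely almost simple over $k_i$, so by \cite[Prop.~2.6]{PR} its arithmetic subgroup has minimal field of definition exactly $k_i$, with no genericity needed. But this does not rescue the argument, because the upper bound on trace fields is missing.

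The paper's route is quite different and avoids the issue entirely. It takes an $(m-1)$-dimensional subform $r\subset q_1$, observes that the resulting subspace must (by $\Q TG(M_1)=\Q TG(M_2)$) correspond to a $\Q$-subgroup of $R_{k_2/\Q}\mathbf{SO}(q_2)$, and then invokes a pure dimension inequality
\[
\dim\mathbf{SO}(q_2)=\tfrac{m(m-1)}{2}<2\cdot\tfrac{(m-1)(m-2)}{2}=2\dim\mathbf{SO}(r)
\]
together with uniqueness of the restriction-of-scalars decomposition (Proposition~\ref{fielddefcontain}) to force $k_1\cong k_2$. The point is that a subgroup this large cannot factor through a nontrivial Weil restriction from a strictly larger field, so no classification of \emph{all} totally geodesic subspaces is needed. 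Your small-subform approach gives up this leverage: a $3$-dimensional $\mathbf{SO}(q')$ is far too small to satisfy the inequality, which is why you were driven to the (unworkable) maximal-field argument instead.
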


Using the technical results of Section \ref{sectionconstructions} on quadratic forms and their isometry groups, we are then able to prove our main theorem on commensurability.


\begin{mainthm}
\label{thrmB}
Let $M_1$ and $M_2$ be $\R$-simple, arithmetic locally symmetric orbifolds coming from quadratic forms of dimension $m\ge 5$.
If $\Q TG(M_1)= \Q TG(M_2)$, then $M_1$ and $M_2$ are commensurable.
\end{mainthm}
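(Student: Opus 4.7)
The plan is a two-stage argument. First I apply Theorem \ref{thrmA} to identify the defining fields, and then I use the hypothesis $\Q TG(M_1)=\Q TG(M_2)$ to reconstruct the defining quadratic form up to similarity over the common field, which forces the isometry groups to be $k$-isomorphic and hence the orbifolds to be commensurable.

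Step 1 (Reduction to quadratic forms). By Theorem \ref{thrmA} I may fix an isomorphism $k_1 \cong k_2 =: k$. Because $M_i$ is standard arithmetic coming from a quadratic form $q_i/k$, the commensurability class of $M_i$ is an invariant of the $k$-isomorphism class of the algebraic $k$-group $\mathbf{SO}(q_i)$, which for orthogonal groups is controlled by the similarity class $[q_i]$ of $q_i$ over $k$ together with the archimedean signature data; the latter is already a commensurability invariant under the $\R$-simple hypothesis. Thus it suffices to show that $q_1$ and $q_2$ are similar over $k$.

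Step 2 (Totally geodesic suborbifolds are subforms). The dictionary I will invoke from Section \ref{sectionconstructions} says that every nonflat finite-volume totally geodesic suborbifold of $M_i$ arises, up to commensurability, from a nondegenerate $k$-subform $q_i'$ of $q_i$ of some dimension $d$ with $3 \le d \le m-1$, via the natural embedding $\mathbf{SO}(q_i') \hookrightarrow \mathbf{SO}(q_i)$ associated to an orthogonal splitting $q_i = q_i' \perp r_i$; moreover the commensurability class of the suborbifold is recorded by the similarity class of $q_i'$ together with the inherited archimedean signature. Consequently $\Q TG(M_1) = \Q TG(M_2)$ yields, for each relevant dimension $d$, a similarity-class-preserving bijection between the $d$-dimensional $k$-subforms of $q_1$ and of $q_2$.

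Step 3 (Recovering $q$ from its subforms). The analytic core is to show that the similarity class of an $m$-dimensional quadratic form $q$ over a number field $k$ is determined by the collection of similarity classes of its subforms of dimension $m-1$ (together with fixed signature data at the real places). By Hasse--Minkowski this reduces to the analogous local statement, where the similarity class of $q$ over $k_v$ is pinned down by dimension, the square class of the discriminant, and the Hasse--Witt invariant. Writing $q = q' \perp \la a\ra$, the identities
\[
\disc(q) = a \cdot \disc(q') \pmod{(k_v^\times)^2}, \qquad s(q) = s(q') \cdot (a,\disc(q'))_v,
\]
allow one to recover the missing local invariants of $q$ from those of enough codimension-one subforms, and the archimedean signature constraint removes residual ambiguity.

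The main obstacle will be reconciling Steps 2 and 3: the geometric dictionary produces \emph{similarity} classes of subforms rather than isometry classes, so the scaling ambiguity must be tracked carefully when extracting local invariants; and one must verify that sufficiently many codimension-one (or small-codimension) subforms actually arise as totally geodesic suborbifolds under the $\R$-simple restriction, which is where the technical work of Section \ref{sectionconstructions} is designed to pay off. Once that bookkeeping is in place, Hasse--Minkowski together with Step 1 yields the $k$-similarity of $q_1$ and $q_2$, and hence the commensurability of $M_1$ and $M_2$.
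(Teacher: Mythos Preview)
Your high-level strategy---use Theorem~\ref{thrmA} to pin down the field, then argue that the subform data determines the isogeny class of $\mathbf{SO}(q)$---is exactly the paper's strategy, and Step~2 is correct (this is Proposition~\ref{subformsubspaces}). However, there are genuine gaps in Steps~1--3 that the paper handles quite differently.

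First, a precision issue that propagates: the commensurability class of $M_i$ is governed by the \emph{isogroupy} class of $q_i$ (i.e.\ the $k$-isomorphism class of $\mathbf{SO}(q_i)$), not the similarity class. For odd $m$ these coincide (Proposition~\ref{simparamgrp}), but for even $m$ they do not: at a place where $\mathrm{disc}_v(q)\neq 1$ the local index is ${}^2D_{n,n-1}^{(1)}$ regardless of $c_v(q)$, so isogroupic forms need not be similar. Likewise, the geometric dictionary in Step~2 hands you isogroupy classes of subforms, not similarity classes. When $m$ is odd your codimension-one subforms are even-dimensional, and their isogroupy class does \emph{not} record $c_v(r)$ at places where $\mathrm{disc}_v(r)\neq 1$. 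Your identities in Step~3 therefore cannot be applied naively: you do not have access to $s(q')$ at all places, only at those where $\mathrm{disc}_v(q')=1$.

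Second, codimension-one subforms alone do not always suffice. For $m$ even with $\det q_1\neq\det q_2$, the paper must drop to codimension two (Theorem~\ref{finiterepeven2}); and by Proposition~\ref{allin}, subforms of codimension~$>2$ carry no distinguishing information whatsoever. So your Step~3 cannot be a clean ``read off the local invariants from $(m-1)$-dimensional subforms'' argument.

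The paper avoids these bookkeeping problems by arguing the contrapositive: assuming $\mathbf{SO}(q_1)\not\cong_k\mathbf{SO}(q_2)$, it uses the Hasse principle to locate a bad finite place $v_0$, then \emph{constructs} (via the Square Existence Lemma~\ref{squareexistence} and Theorems~\ref{finiterepodd}, \ref{finiterepeven1}, \ref{finiterepeven2}) a specific codimension-one or codimension-two subform $r$ of one $q_i$ whose isogroupy class cannot occur as a subform of the other. The construction is engineered so that $\mathrm{disc}_{v_0}(r)=1$, which is exactly what makes the isogroupy class of $r$ detect $c_{v_0}(r)$ and hence distinguish the two ambient forms. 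Your forward reconstruction could in principle be made to work, but you would end up reproving these same constructions to guarantee the existence of enough ``good'' subforms; the contrapositive route is cleaner.
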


Unlike \cite{PR} and \cite{G}, Theorem \ref{thrmB} is not dependent in $\R$-rank$\ge 2$ upon the truth of Schanuel's conjecture.  
Specializing to the $\R$-rank one case, we obtain a result about standard arithmetic hyperbolic orbifolds.

\begin{mainthm}
\label{thrmC}
Let $M_1$ and $M_2$ be standard arithmetic hyperbolic $n$-orbifolds, $n\ge 4$.  If $\Q TG(M_1)= \Q TG(M_2)$, then $M_1$ and $M_2$ are commensurable.
\end{mainthm}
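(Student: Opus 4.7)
The plan is to derive Theorem C as a direct specialization of Theorem B once I verify that the standard arithmetic hyperbolic hypothesis fits the framework of Theorem B. First, I would unpack the definition of ``standard arithmetic hyperbolic $n$-orbifold'' via the cited Construction \ref{quadcon}/\cite[Theorem 10.2.3]{MaR2}: such an orbifold corresponds to an arithmetic subgroup of $\mathbf{SO}(q)$, where $q$ is a non-degenerate quadratic form in $m = n+1$ variables over a totally real number field $k$, with signature $(n,1)$ at one distinguished real place and signature $(n+1,0)$ (hence anisotropic, producing a compact $\mathbf{SO}(n+1)$ factor) at every other real place. The dimension hypothesis $n \geq 4$ translates immediately into $m = n+1 \geq 5$, which is the dimension hypothesis of Theorem B.

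Next, I would check the $\mathbb{R}$-simplicity hypothesis. In the notation $\prod_{i=1}^r \mathbf{SO}(p_i, m - p_i) \times (\mathbf{SO}_m(\mathbb{C}))^s$ of the excerpt, only non-compact archimedean factors contribute to the locally symmetric space, and for a standard arithmetic hyperbolic orbifold there is exactly one such factor, namely $\mathbf{SO}(n,1)$, so $r = 1$ and $s = 0$. Hence $M_1$ and $M_2$ are $\mathbb{R}$-simple arithmetic locally symmetric orbifolds coming from quadratic forms of dimension $m \geq 5$, and the hypotheses of Theorem B are satisfied verbatim.

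Finally, I would invoke Theorem B: since $\mathbb{Q}TG(M_1) = \mathbb{Q}TG(M_2)$, the theorem concludes that $M_1$ and $M_2$ are commensurable, which is exactly the conclusion of Theorem C. There is no genuine obstacle beyond making these identifications precise; the substantive work has been done in Theorems A and B (and the quadratic-form technology of Section \ref{sectionconstructions}), and Theorem C is the rank-one shadow of that machinery. The only minor point requiring care is the bookkeeping that compact archimedean factors (present whenever $k \neq \mathbb{Q}$ at the non-distinguished real places) do not spoil the $\mathbb{R}$-simplicity count, as they are implicitly quotiented out when forming the locally symmetric space; once this is noted, the reduction is immediate.
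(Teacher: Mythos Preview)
Your proposal is correct and matches the paper's own treatment: the paper states in one line that ``Upon specializing to $\R$-rank one spaces, Theorem~\ref{thrmC} follows from Theorem~\ref{thrmB},'' and your write-up simply makes explicit the identifications ($m=n+1\ge 5$, $r+s=1$ since $k$ is totally real with a unique isotropic real place) needed to see that standard arithmetic hyperbolic $n$-orbifolds satisfy the hypotheses of Theorem~\ref{thrmB}.
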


In fact, we show that codimension one and codimension two totally geodesic subspaces determine the commensurability class of a standard arithmetic hyperbolic orbifold.  
We go on to show in Theorem \ref{hypersurfacessuffice} that the commensurability class of an even dimensional arithmetic hyperbolic orbifold is totally determined by its codimension one totally geodesic subspaces. 
To complement these results, we then show that there are many commensurability classes of hyperbolic orbifolds with the exact same collection of totally geodesic subspaces in codimension greater than 2.  

\begin{mainthm}[Hyperbolic Subspace Dichotomy]
\label{thrmD}
Let $M_1$ and $M_2$ be standard arithmetic hyperbolic $n$-orbifolds, $n\ge 4$, with fields of definition $k_1$ and $k_2$, respectively.  
Then either 
\begin{enumerate}
\item $k_1\cong k_2$, in which case, for all $j\in \N$, $1< j<n-2$, up to commensurability, $M_1$ and $M_2$ have the exact same collection of $j$-dimensional finite volume totally geodesic subspaces, or
\item $k_1\not\cong k_2$, in which case, up to commensurability, $M_1$ and $M_2$  do not share a single finite volume totally geodesic subspace of dimension $\ge 2$.
\end{enumerate}
\end{mainthm}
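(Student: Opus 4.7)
The plan is to analyze both alternatives through the quadratic forms $q_1, q_2$ over $k_1, k_2$ defining $M_1, M_2$. Under the correspondence of Construction \ref{quadcon}, a finite-volume totally geodesic subspace of dimension $j$ in $M_i$ is determined, up to commensurability, by the similarity class of a non-degenerate $(j+1)$-dimensional subform of $q_i$ having signature $(j,1)$ at the special real place of $k_i$ and definite signature at every other real place. Such a subspace is itself a standard arithmetic hyperbolic $j$-orbifold with field of definition $k_i$.

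For part (ii), suppose for contradiction that a subspace $N$ of dimension $j \geq 2$ appears, up to commensurability, in both $M_1$ and $M_2$. Since $j \geq 2$, the commensurability class of $N$ has a well-defined field of definition, namely its invariant trace field (cf.~\cite{MaR2}), and the realization of $N$ as a subspace of $M_i$ identifies this field with $k_i$. This forces $k_1 \cong k_2$, the contrapositive of (ii). The hypothesis $j \geq 2$ is essential: for $j = 1$ the subspace is a closed geodesic, which carries no such arithmetic invariant.

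For part (i), identify $k_1 \cong k_2 = k$ and fix defining forms $q_1, q_2$ of dimension $n+1$ over $k$ with the prescribed signatures. It suffices to establish the following embedding lemma: whenever $q$ is a form over $k$ of dimension $n+1$ with signature $(n,1)$ at the special real place and definite elsewhere, and $q'$ is a form over $k$ of dimension $d$ with $3 \leq d \leq n-2$ and compatible signatures, there exists $\lambda \in k^*$ such that $\lambda q'$ is a subform of $q$. Applied to $d = j+1$ with $2 \leq j \leq n-3$, this shows the collection of similarity classes of $(j+1)$-dimensional subforms of $q_i$ depends only on $k$, $j$, and the signature data, so $M_1$ and $M_2$ share all such totally geodesic subspaces up to commensurability.

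To prove the embedding lemma, seek $q''$ of dimension $n+1-d \geq 3$ with $q \cong \lambda q' \perp q''$. The dimension, discriminant, and archimedean signatures of $q''$ are forced by those of $q$ and $\lambda q'$, and its Hasse invariant at each finite place is likewise determined. By the Hasse--Minkowski local-global principle, such a $q''$ exists over $k$ iff the prescribed local invariants are each individually realizable (automatic since $\dim q'' \geq 3$) and satisfy the global product formula for Hasse invariants. The product-formula constraint is a single parity condition in $k^*/k^{*2}$; the main obstacle is verifying that one can always find a scaling $\lambda$, either via a Dirichlet-density argument or by explicit modification of Hasse invariants at a pair of finite places, that flips the parity and makes the decomposition realizable over $k$. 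The codimension hypothesis $n - j \geq 3$ is indispensable here and is consistent with the earlier result of the paper that codim-$1$ and codim-$2$ subspaces instead detect the full commensurability class.
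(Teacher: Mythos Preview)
Your outline is essentially the paper's proof: part (ii) via the field of definition of a shared subspace (the paper's Proposition~\ref{subformsubspaces}), part (i) via an embedding lemma for codimension~$\ge 3$ subforms (the paper's Proposition~\ref{allin}). One substantive correction and one caveat.

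The ``main obstacle'' you flag in the embedding lemma is a phantom, and the scaling $\lambda$ is unnecessary. Having fixed $q$ and $q'$ over $k$ with $q'\otimes k_v$ a subform of $q\otimes k_v$ at every archimedean $v$, define at \emph{every} place
\[
c_v(q'') \;=\; c_v(q)\,c_v(q')\,\bigl(\det q',\ \det q/\det q'\bigr)_v.
\]
At archimedean places this is forced by taking $q''=(q')^\perp$ inside $q$; at finite places it is realizable since $\dim q''\ge 3$. Then
\[
\prod_{v\in V_k} c_v(q'') \;=\; \Bigl(\prod_v c_v(q)\Bigr)\Bigl(\prod_v c_v(q')\Bigr)\Bigl(\prod_v \bigl(\det q',\det q/\det q'\bigr)_v\Bigr) \;=\; 1\cdot 1\cdot 1,
\]
because $q$ and $q'$ are already global forms (so their Hasse invariants satisfy the product formula) and by Hilbert reciprocity. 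Hence the global $q''$ exists with $\lambda=1$, and no Dirichlet-density or parity-flipping argument is needed. This is exactly the computation in the paper's Proposition~\ref{allin}, and it explains why the only real hypothesis is $\dim q''\ge 3$.

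The caveat: your opening assertion that every finite-volume totally geodesic $N\subset M_i$ arises from a $k_i$-subform of $q_i$ is not a formality of Construction~\ref{quadcon}; it is the content of Proposition~\ref{subformsubspaces} and needs Borel density. The lattice $\Lambda$ in the semisimple subgroup $H\subset\mathbf{SO}(q_i)(\R)$ is Zariski-dense, which forces the $\R$-subspace $W'\subset V_\R$ underlying $H$ to be spanned by $L\cap W'$ and hence defined over $k_i$. Without this step you cannot conclude $k(N)=k_i$ in part~(ii); your appeal to the invariant trace field gives the intrinsic field of $N$, but you still need the embedding into $M_i$ to identify that field with $k_i$.
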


The techniques that we use to prove Theorems \ref{thrmA} - \ref{thrmD} have many applications.
In Section \ref{sectionapplications} we use them to answer a question posed to us by Jean-Fran\c{c}ois Lafont, and in Appendix \ref{sectionmaclachlan} we give an alternate proof of Machlachlan's parametrization of commensurability classes of even dimensional arithmetic hyperbolic orbifolds.

Along the way, we construct several explicit examples of commensurability classes of standard arithmetic hyperbolic orbifolds with specific properties.  
In particular, in Example \ref{interestingex}, we construct a hyperbolic 3-orbifold $N$ and a hyperbolic 5-orbifold $M$ such that every totally geodesic surface in $N$ is commensurable to a totally geodesic surface in $M$, yet $N$ is not commensurable to a totally geodesic subspace of $M$.

While all even dimensional arithmetic hyperbolic orbifolds come from quadratic forms, there are odd dimensional ones that do not.  
To this end we also address some results on spaces coming from skew Hermitian forms over quaternion division algebras over number fields.   
Though there are considerable obstructions to finishing the analysis for groups coming from this construction, we do have initial results.
To start, an $\R$-simple, nonstandard arithmetic, locally symmetric space cannot be totally-geodesic-commensurable with a standard one.

\begin{mainthm}
\label{thrmE}
Let $M_1$ and $M_2$ be $\R$-simple, arithmetic locally symmetric spaces where $M_1$ comes from a quadratic form and $M_2$ comes from a skew Hermitian form over a division algebra.  Then $\Q TG(M_1)\ne \Q TG(M_2)$.
\end{mainthm}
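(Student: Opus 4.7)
The plan is to exploit the Brauer class of the central simple algebra underlying the ambient algebraic group as a commensurability invariant: this class is trivial for every totally geodesic subspace of $M_1$, but is forced to encode the division algebra for a suitable totally geodesic subspace of $M_2$.

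Let $D$ be the quaternion division algebra over $k_2$ and let $h$ be the skew-Hermitian form of $D$-dimension $n$ defining $M_2$. Since $M_2$ is $\R$-simple of type $D_n$, one checks $n\geq 3$ (the $D_2\cong A_1\times A_1$ case is not $\R$-simple). Using the constructions of Section~\ref{sectionconstructions}, I would choose a non-degenerate $D$-subspace $W'\subseteq W$ of $D$-dimension $n'$ with $3\leq n'\leq n$, arranged so that $h|_{W'}$ remains isotropic at the non-compact infinite place of $k_2$. The associated totally geodesic suborbifold $N\subset M_2$ is then nonflat and of finite volume, and its ambient algebraic $k_2$-group is $\mathbf{SU}(h|_{W'},D)$: a $k_2$-form of type $D_{n'}$ whose underlying central simple $k_2$-algebra is $M_{n'}(D)$, representing the nontrivial Brauer class $[D]\in\Br(k_2)$.

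Suppose for contradiction that $\Q TG(M_1)=\Q TG(M_2)$. Then $[N]\in \Q TG(M_1)$, so $N$ is commensurable to some totally geodesic suborbifold $N'\subseteq M_1$. By Section~\ref{sectionconstructions}, $N'$ arises from a quadratic subform $q'=q|_{V'}$ with $\dim_{k_1}V'=2n'$, and its ambient algebraic $k_1$-group is $\mathbf{SO}(q')$, whose underlying central simple algebra $M_{2n'}(k_1)$ has trivial Brauer class. Commensurability of arithmetic lattices forces a $k$-isogeny between these two groups after identifying $k:=k_1\cong k_2$; by the classification of $k$-forms of $D_{n'}$ for $n'\geq 4$, the Brauer class of the underlying central simple algebra is an isogeny invariant. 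Since $[M_{n'}(D)]=[D]\neq 0$ while $[M_{2n'}(k)]=0$, this yields the desired contradiction.

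The main obstacle is matching the number fields $k_1$ and $k_2$: Theorem~\ref{thrmA} does not apply directly because $M_2$ is not a quadratic-form orbifold, so one must either adapt its field-recovery argument to the mixed setting or argue that the field of definition of a commensurability class of arithmetic locally symmetric orbifolds is canonically recoverable from its commensurator. A secondary issue is the low-dimensional coincidence $D_3\cong A_3$ when $n=3$ forces $n'=3$: the Brauer-class strategy still applies here via the $A_3$ classification (where $k$-forms are distinguished by a degree-$4$ central simple algebra over a quadratic extension of $k$ together with a unitary involution), but this edge case must be handled separately.
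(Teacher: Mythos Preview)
Your approach is genuinely different from the paper's, and the difference exposes a real gap beyond the two you flag.

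The paper works in the opposite direction: it takes a codimension-one quadratic subform $r\subset q$ (so $\dim r=2n-1$), and argues that $N_r$ cannot lie in $\Q TG(M_2)$. The field-matching issue you identify as the ``main obstacle'' is handled there directly by Proposition~\ref{fielddefcontain}, since $\dim\mathbf{SU}(h)=n(2n-1)<2\dim\mathbf{SO}(r)$ for $n\ge4$. The contradiction then comes from an elementary local rank comparison: at any finite place $v$ where $D$ ramifies, $\mathrm{rank}_{k_v}\mathbf{SU}(h)\le \lfloor n/2\rfloor$, while $\mathrm{rank}_{k_v}\mathbf{SO}(r)\ge n-2$, so $\mathbf{SO}(r)$ cannot sit inside $\mathbf{SU}(h)$ as a $k$-subgroup. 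No Tits algebras or Brauer invariants are needed.

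Your Brauer-class strategy, by contrast, requires knowing that the totally geodesic subspace $N'\subset M_1$ commensurable to your $N$ arises from a quadratic subform $q'\subset q$, so that its ambient group is $\mathbf{SO}(q')$ with trivial $\omega_1$-Tits algebra. You cite ``Section~\ref{sectionconstructions}'' for this, but that section only \emph{constructs} subform subspaces; it does not prove that every nonflat finite-volume totally geodesic subspace of $M_1$ is one. The paper does prove such a statement (Proposition~\ref{subformsubspaces}), but only for hyperbolic $M_1$, and the argument genuinely uses that the only totally geodesic submanifolds of $\bfH^n$ are copies of $\bfH^j$. For a general $\R$-simple $M_1$ attached to $\mathbf{SO}(p,m-p)$ with $p\ge2$, the symmetric space has many totally geodesic submanifolds not of this shape (e.g.\ $G_2\subset\mathbf{SO}_7$, or unitary subgroups embedded via restriction of scalars), so the corresponding $k$-subgroup of $\mathbf{SO}(q)$ need not be an orthogonal group of a subform, and there is no a~priori reason its Tits algebra for the standard representation is trivial. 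This is a genuine gap, not just a missing citation.

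The upshot: going $M_1\to M_2$ as the paper does lets you \emph{choose} the totally geodesic subspace to be a subform subspace and then obstruct its embedding into $M_2$ by rank; going $M_2\to M_1$ as you do forces you to \emph{classify} what can receive it inside $M_1$, which is substantially harder in the non-hyperbolic case.
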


Furthermore, the rational totally geodesic spectrum determines the field and algebra of definition of a nonstandard arithmetic lattice.  

\begin{mainthm}
\label{thrmF}
Let $M_1$ and $M_2$ be $\R$-simple arithmetic locally symmetric spaces coming from skew Hermitian forms of dimension $n\ge 4$ over  quaternion division algebras $D_1$ and $D_2$ over number fields $k_1$ and $k_2$ respectively.  
If $\Q TG(M_1)= \Q TG(M_2)$, then $k_1$ and $k_2$ are isomorphic and this isomorphism induces an isomorphism between $D_1$ and $D_2$.
\end{mainthm}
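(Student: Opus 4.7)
The plan mirrors that of Theorem \ref{thrmA} and proceeds in two stages: first recover the common field $k$ up to isomorphism from the invariant trace fields of low-rank totally geodesic subspaces, then extract the Brauer class $[D_i] \in \Br(k)$ from the local algebraic-group data encoded in those same subspaces.

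For each nondegenerate rank-$r$ $D_i$-submodule $U \subset D_i^n$ with $2 \le r < n$, the restriction $h_i|_U$ yields a $k_i$-subgroup $\mathbf{SU}(U, h_i|_U) \hookrightarrow \mathbf{SU}(D_i^n, h_i)$ and a corresponding finite-volume, nonflat, totally geodesic sublocus $N \subset M_i$. The invariant trace field of the arithmetic lattice of $N$ is a commensurability invariant equal to $k_i$, so matching classes under $\Q TG(M_1) = \Q TG(M_2)$ forces $k_1 \cong k_2$; identify $k_1 = k_2 =: k$. With $k$ fixed, $\mathbf{SU}(D_i^n, h_i)$ is an inner form of $\mathbf{SO}_{2n}$ whose Tits class maps to $[D_i] \in \Br(k)$, and the same class is inherited by each subform group $\mathbf{SU}(U, h_i|_U)$. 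At each place $v$ of $k$, the local behavior of a totally geodesic sublocus records whether $D_i$ splits at $v$ (in which case the local isometry group is that of a quadratic form over $k_v$) or ramifies at $v$ (in which case the local group retains nonsplit quaternionic structure, visible in the real form of $N$ for archimedean $v$ and in local invariants of the arithmetic lattice for finite $v$). Matching these data across $M_1$ and $M_2$ pins down the ramification sets of $D_1$ and $D_2$, and the Albert-Brauer-Hasse-Noether local-global principle yields $D_1 \cong D_2$ as $k$-algebras.

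The principal obstacle is compatibility: the first stage produces an isomorphism $k_1 \cong k_2$ that is a priori unique only up to the Galois action on $k$, whereas aligning the ramification sets of $D_1$ and $D_2$ requires a specific choice of isomorphism. One must therefore show that some $\varphi: k_1 \cong k_2$ simultaneously transports the archimedean signature data realizing each $M_i$ as $\R$-simple and the local splitting behavior of $D_i$; the $\R$-simple hypothesis distinguishes a real place of each $k_i$, and the matched geometry at that place, together with the rich family of rank-$r$ subspaces described above, should rigidify $\varphi$ sufficiently to force coincidence of ramification sets. This place-by-place rigidification is the technical heart of the proof, analogous to but more delicate than the corresponding step for quadratic forms in Section \ref{sectionconstructions} and Theorem \ref{thrmB}.
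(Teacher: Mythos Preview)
Your first step has a genuine gap. You correctly note that a subform subspace $N$ of $M_1$ has invariant trace field $k_1$, but the equality $\Q TG(M_1)=\Q TG(M_2)$ only tells you that $N$ is commensurable to \emph{some} finite-volume totally geodesic subspace of $M_2$; it does not tell you that this subspace is itself a subform subspace of $M_2$, and without that you cannot conclude its invariant trace field is $k_2$. (For standard hyperbolic orbifolds this gap is closed by Proposition~\ref{subformsubspaces}, but no such result is established in the skew Hermitian setting.) The paper circumvents this by choosing a single codimension-one subform $r\subset h_1$ and checking the dimension inequality
\[
\dim \mathbf{SU}(h_2)=n(2n-1)<2(n-1)(2n-3)=2\dim \mathbf{SU}(r)\qquad (n\ge 4),
\]
so that Proposition~\ref{fielddefcontain} applies directly to the $\Q$-subgroup inclusion $R_{k_1/\Q}\mathbf{SU}(r)\hookrightarrow R_{k_2/\Q}\mathbf{SU}(h_2)$ furnished by Proposition~\ref{commimpliesisothm}. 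This dimension count is what forces $k_1\cong k_2$; no hypothesis on how $N$ is realized inside $M_2$ is needed, and your appeal to invariant trace fields alone does not supply it.

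Your second step is more elaborate than necessary and, as you yourself acknowledge, leaves its ``technical heart'' unproved. The paper's argument is far shorter and sidesteps the Galois-compatibility obstacle you raise entirely. Once $k_1\cong k_2=:k$ and $h_2$ is twisted so that both forms live over $k$ and are isotropic at the same infinite place, one argues by contradiction: if $D_1\not\cong D_2$, pick a single finite place $v$ where (after relabeling) $D_1$ splits and $D_2$ ramifies. The same codimension-one subform group $\mathbf{H}=\mathbf{SU}(r)$, now viewed over $k$, has large $k_v$-rank (since $D_1\otimes k_v$ is a matrix algebra, $\mathbf{H}\otimes k_v$ is the special orthogonal group of a $(2n-2)$-dimensional quadratic form over $k_v$), whereas $\mathbf{SU}(h_2)\otimes k_v$ has $k_v$-rank at most $\lfloor n/2\rfloor$ because $D_2\otimes k_v$ remains a division algebra. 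A direct comparison of local ranks then rules out the required subgroup inclusion. There is no need to pin down the full ramification set of either $D_i$, no invocation of Albert--Brauer--Hasse--Noether, and no place-by-place rigidification of the field isomorphism.
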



\section{Notation and Preliminary Results:\\ Commensurability, Totally Geodesic Subspaces, and Locally Symmetric Spaces}\label{sectionnotation}

In this paper $F$ is a field that is not of characteristic 2, $\overline{F}$ is a fixed algebraic closure of $F$, $k$ is a number field, and $\mathcal{O}_k$ is its ring of integers.

Two subgroups $\Gamma_1$ and $\Gamma_2$ of a group $G$ are \textbf{commensurable} if $\Gamma_1\cap \Gamma_2$ is finite index in both $\Gamma_1$ and $\Gamma_2$.  
Following \cite{PR}, we shall say two subgroups $\Gamma_1, \Gamma_2$ of a $G$ are \textbf{commensurable up to $G$-automorphism} if there exists a $G$-automorphism $\varphi$ such that $\Gamma_1$ and $\varphi(\Gamma_2)$ are commensurable. 
(Note that some authors refer to this notion as \textbf{commensurable in the wide sense} \cite[Def. 1.3.4]{MaR2}.)
Commensurability up to $G$-automorphism is an equivalence relation among subgroups of $G$.  
Two Riemannian manifolds are \textbf{commensurable} if they have isometric finite sheeted covers.

Let $M$ be a Riemannian manifold and let $N\subset M$ be a connected immersed submanifold.  
Recall  that $N$ is \textbf{geodesic at $p\in N$} if every geodesic of $M$ starting at $p$ and tangent to $N$ at $p$ is a geodesic of $N$. 
If $N$ is geodesic at each of its points it is called \textbf{totally geodesic}.     

Following \cite[Chp 13]{Th}, we call the quotient of a manifold by a properly discontinuous (not necessarily free) group action a \textbf{good orbifold}.
Since discrete subgroups of semisimple Lie groups often have torsion, good orbifolds naturally appear in the commensurability classes of locally symmetric manifolds.
When a good orbifold is a quotient of a Riemannian manifold by isometries, we call it a good Riemannian orbifold.  
Every good Riemannian orbifold naturally has a Riemannian manifold universal cover.  
A subspace of a Riemannian orbifold is defined to be totally geodesic if it is the image of a totally geodesic subspace in its universal cover.  
It follows  that the sets $TG(M)$ and $\Q TG(M)$ (Definitions \ref{deftgm} and \ref{defqtgm}) make sense for all good Riemannian orbifolds.

\begin{lem}Commensurable good Riemannian orbifolds are totally-geodesic-commensurable.\end{lem}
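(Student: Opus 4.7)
The plan is to prove the slightly stronger statement that any finite Riemannian orbifold cover $\pi\colon \widetilde M \to M$ induces a bijection $\pi_*\colon \Q TG(\widetilde M) \to \Q TG(M)$. Since by definition $M_1$ and $M_2$ are commensurable precisely when they admit a common finite cover $\widetilde M$, applying this bijection on each side yields
$$\Q TG(M_1) \longleftrightarrow \Q TG(\widetilde M) \longleftrightarrow \Q TG(M_2),$$
so the two rational totally geodesic spectra agree, as desired.

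The bijection rests on two complementary facts about a finite orbifold cover $\pi\colon \widetilde M \to M$. First, if $\widetilde N \subset \widetilde M$ is a connected, nonflat, finite-volume, totally geodesic sub-orbifold, then $\pi(\widetilde N)$ is a nonflat, totally geodesic sub-orbifold of $M$---being totally geodesic is a local property preserved by local isometries and by the orbifold quotient, and nonflatness is preserved by local isometries---and the restriction $\pi|_{\widetilde N}\colon \widetilde N \to \pi(\widetilde N)$ is itself a finite orbifold cover, so $\pi(\widetilde N)$ has finite volume and is commensurable to $\widetilde N$. Second, if $N \subset M$ is such a sub-orbifold, then every connected component of $\pi^{-1}(N)$ is a totally geodesic sub-orbifold of $\widetilde M$ that covers $N$ with degree at most $\deg(\pi)$, and hence is nonflat, finite-volume, and commensurable to $N$.

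From these facts, the assignment $[\widetilde N]\mapsto [\pi(\widetilde N)]$ is well defined on commensurability classes (commensurable sub-orbifolds of $\widetilde M$ have $\pi$-images that admit a common finite cover, obtained by pushing down a common cover of the originals), surjective (any $[N]\in \Q TG(M)$ is the image of any component of $\pi^{-1}(N)$), and injective (if $\pi(\widetilde N_1)$ and $\pi(\widetilde N_2)$ share a common finite cover $P\to \pi(\widetilde N_i)$, then pulling $P$ back through $\pi$ and choosing components produces a common finite cover of $\widetilde N_1$ and $\widetilde N_2$). Hence $\pi_*$ is a bijection and the argument is complete.

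The only real subtlety is the orbifold bookkeeping: one must verify that preimages and images of sub-orbifolds with possibly nontrivial isotropy behave as expected under $\pi$, and that the common-cover constructions can be lifted and pushed down between $\widetilde M$ and $M$. These are routine verifications---performed by working in the universal cover and descending via the deck group actions---so I do not expect them to pose a serious obstacle.
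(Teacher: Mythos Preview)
Your proposal is correct and follows essentially the same lift-and-project idea as the paper: pass to a common finite cover, pull back a totally geodesic subspace, and push it down to the other quotient. The paper's version is more terse---it simply observes that for $N_1\subset M_1$ the space $N_2:=\pi_2(N')$, with $N'$ a component of $\pi_1^{-1}(N_1)$, is a nonflat finite-volume totally geodesic subspace of $M_2$ commensurable to $N_1$, and then invokes symmetry---whereas you package the same construction as a bijection $\pi_*$ for a single cover and compose; your injectivity argument is extra work not needed for the lemma, but harmless.
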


\begin{proof}
Let $M_1$ and $M_2$ be commensurable and $\widetilde{M}$ be a shared finite sheeted cover with projections $\pi_1$ and $\pi_2$.  
Pick a nonflat finite volume totally geodesic subspace $N_1\subset M_1$.  
Then $N_2:=\pi_2(N')$, where $N'$ is a connected componant of $\pi_1^{-1}(N_1)$, is a totally geodesic submanifold of $M_2$.  
Since $\pi_1$ and $\pi_2$ are finite sheeted covers, $N_2$ is also nonflat and of finite volume.  By symmetry of argument, the result follows.
\end{proof}

In general, totally geodesic subspaces are rare, and we should only expect to find such subspaces when we are considering an ambient space with many symmetries.  
As such, in what follows, we shall only consider locally symmetric spaces.  
A Riemannian manifold $M$ is a \textbf{globally symmetric space} if each point $p\in M$ is an isolated fixed point of an involutive isometry of $M$.  
Totally geodesic subspaces of a globally symmetric space are also globally symmetric \cite[Ch. IV  Prop 7.1]{H}.
One of the advantages to working with globally symmetric spaces is that questions about the spaces can be translated into questions about its isometry group.  
A globally symmetric space is of \textbf{noncompact type}  if $G:=\mathrm{Isom}^\circ(M)$ is a semisimple Lie group with no compact factors, in which case $M$ is isometric to $G/K$ where $K$ is a maximal compact subgroup of $G$.

\begin{lem}\label{prop24}
Let $M$ a connected globally symmetric space of noncompact type, $G=\mathrm{Isom}^\circ(M)$ and $K$ a stabilizer of a point $p_0\in M$.   
\begin{enumerate}
\item Let $H\subset G$ be a semisimple Lie subgroup with no compact factors.  Then $N_H:=H/(H\cap K)$ is a totally geodesic submanifold of $M$.
\item Let $N\subset M$ be a totally geodesic submanifold of noncompact type such that $p_0\in N$.  Then there exists a semisimple Lie subgroup $H_N\subset G$ with no compact factors such that $H_N/(H_N\cap K)=N$.
\end{enumerate}
\end{lem}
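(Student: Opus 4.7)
The plan is to reduce both directions to the classical correspondence between totally geodesic submanifolds of $M$ through $p_0$ and Lie triple systems in $T_{p_0} M$. Let $\theta$ be the Cartan involution of $\frakg := \mathrm{Lie}(G)$ fixing $\mathfrak{k} := \mathrm{Lie}(K)$, with Cartan decomposition $\frakg = \mathfrak{k} \oplus \frakp$ identifying $\frakp \cong T_{p_0} M$. I will use the bracket relations $[\mathfrak{k}, \frakp] \subset \frakp$ and $[\frakp, \frakp] \subset \mathfrak{k}$, together with the fact that the Killing form of $\frakg$ is negative definite on $\mathfrak{k}$ and positive definite on $\frakp$ (the noncompact type hypothesis). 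The key input, essentially \cite[Ch.~IV, Thm.~7.2]{H}, is that complete totally geodesic submanifolds of $M$ through $p_0$ are in bijection with Lie triple systems $\mathfrak{s} \subset \frakp$ (subspaces satisfying $[\mathfrak{s}, [\mathfrak{s}, \mathfrak{s}]] \subset \mathfrak{s}$), via $\mathfrak{s} \mapsto \exp_{p_0}(\mathfrak{s})$.

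For part (1), given a semisimple Lie subgroup $H \subset G$ with no compact factors, I would work in the natural setting (implicit in the hypothesis that $H/(H\cap K)$ plays the role of the submanifold) in which $H$ is $\theta$-stable, so that $H \cap K$ is a maximal compact subgroup of $H$ and $\frakh = (\frakh \cap \mathfrak{k}) \oplus (\frakh \cap \frakp)$. The subspace $\mathfrak{s} := \frakh \cap \frakp$ is then automatically a Lie triple system since $[\mathfrak{s}, [\mathfrak{s}, \mathfrak{s}]] \subset \frakh \cap [\frakp, \mathfrak{k}] \subset \frakh \cap \frakp = \mathfrak{s}$. The associated totally geodesic submanifold $\exp_{p_0}(\mathfrak{s})$ coincides with the orbit $H \cdot p_0$ via the Cartan decomposition $H = \exp(\mathfrak{s}) \cdot (H \cap K)$, and orbit-stabilizer identifies this orbit with $H/(H \cap K)$.

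For part (2), set $\mathfrak{s} := T_{p_0} N \subset \frakp$, which is a Lie triple system by the above correspondence. Define $\frakh_N := [\mathfrak{s}, \mathfrak{s}] \oplus \mathfrak{s}$; a short Jacobi calculation using the triple-system condition shows this is closed under the bracket of $\frakg$, so it is a $\theta$-stable subalgebra. The Killing form of $\frakg$ restricts to a nondegenerate form on $\frakh_N$, negative definite on $[\mathfrak{s}, \mathfrak{s}] \subset \mathfrak{k}$ and positive definite on $\mathfrak{s} \subset \frakp$, making $\frakh_N$ reductive in $\frakg$. To upgrade to semisimplicity with no compact factors I would use the noncompact type of $N$: any central element $Z \in \frakh_N$ splits as $Z = Z_\mathfrak{k} + Z_\frakp$ with each component central by $\theta$-invariance, and $Z_\frakp \in \mathfrak{s}$ commuting with $\mathfrak{s}$ would produce a flat Euclidean factor of $N$, contradicting noncompact type; meanwhile any compact simple ideal of $\frakh_N$ would lie entirely in $[\mathfrak{s}, \mathfrak{s}] \subset \mathfrak{k}$, hence centralize $\mathfrak{s}$ (since $[\mathfrak{k}, \frakp] \subset \frakp$ forces the intersection with $\mathfrak{s}$ to be trivial), hence be central in $\frakh_N$, hence zero. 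Taking $H_N$ to be the connected analytic subgroup of $G$ with Lie algebra $\frakh_N$, the orbit $H_N \cdot p_0$ is the totally geodesic submanifold with tangent space $\mathfrak{s}$, so equals $N$, and the stabilizer of $p_0$ in $H_N$ is $H_N \cap K$.

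I expect the main technical point to be the verification in part (2) that $\frakh_N$ is semisimple with no compact factors, as sketched above; the rest of the argument amounts to tracking the Cartan decompositions on both sides of the Lie triple system correspondence and invoking standard orbit-stabilizer considerations.
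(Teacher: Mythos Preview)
Your proposal is correct and follows the same overall framework as the paper (Cartan decomposition, tangent space $\mathfrak{s}\subset\mathfrak{p}$, build a $\theta$-stable subalgebra), but the two diverge in a meaningful way in part~(2). The paper takes $\mathfrak{h}:=N_{\mathfrak{k}}(\mathfrak{s})\oplus\mathfrak{s}$, i.e.\ the full normalizer of $\mathfrak{s}$ inside $\mathfrak{k}$, and then simply asserts that the resulting $H_N$ ``has the desired properties.'' You instead take the minimal choice $\mathfrak{h}_N:=[\mathfrak{s},\mathfrak{s}]\oplus\mathfrak{s}$ and actually verify semisimplicity with no compact factors. Your choice is the standard one from Helgason and is genuinely safer: the paper's larger algebra contains the centralizer $Z_{\mathfrak{k}}(\mathfrak{s})$ as an ideal, which in general is nonzero (e.g.\ for $M=\bfH^2\times\bfH^2$ and $N=\bfH^2\times\{\mathrm{pt}\}$ one gets an extra $\mathfrak{so}(2)$ factor), so the paper's $H_N$ can carry compact factors that your construction avoids by design. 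Your careful check that the center vanishes and that no compact ideal survives is exactly what is needed and is absent from the paper's sketch. For part~(1) the two arguments are close in spirit; you are more explicit than the paper in flagging the implicit $\theta$-stability of $H$ needed to identify $T_{p_0}(H\cdot p_0)$ with $\mathfrak{h}\cap\mathfrak{p}$.
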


\begin{proof}${}$

(1).  Note that $N_H$ is an immersed submanifold of $M$.  Geodesics of $M$ arise from the exponential map of $G$.  Given an element  $X\in\mathrm{Lie}(H)$ we know that $\exp_G(tX)\in H$ for all $t\in \R$, and hence $N$ must be totally geodesic.

(2). 
Let $\mathrm{Lie}(G)=\mathfrak{k}\oplus \mathfrak{p}$ be the Cartan decomposition.  Let $\mathfrak{s}\subset \mathfrak{p}$ be the subspace associated with the tangent space of $N$.  Then $\mathfrak{k}$ acts on $\mathfrak{p}$ by the adjoint representation and let $\mathfrak{k}'=N_{\mathfrak{k}}(\mathfrak{s})=\{X\in \mathfrak{k}\ | \ \mathrm{ad}(X)(\mathfrak{s})\subset\mathfrak{s}\}$.  Then $\mathfrak{h}:=\mathfrak{k}'\oplus \mathfrak{s}$ is a Lie subalgebra of $\mathrm{Lie}(G)$.  Let $H_N$ be the unique connected Lie subgroup of $G$ with Lie algebra $\mathfrak{h}$.  It follows that $H_N$ has the desired properties.
\end{proof}

A good Riemannian orbifold $M$ is a  \textbf{locally symmetric space} if $M$ has universal cover $\widetilde{M}$ that is a globally symmetric space.  
In which case $M=\Gamma\backslash \widetilde{M}$ where  $\Gamma$ is a discrete subgroup of $\mathrm{Isom}^\circ(\widetilde{M})$.  
A locally symmetric space is of \textbf{noncompact type} if its universal cover is a globally symmetric space of  noncompact type.  
Totally geodesic subspaces of a locally symmetric space are also locally symmetric.
The study of locally symmetric spaces of noncompact type translates to the study of discrete subgroups of semisimple Lie groups with no compact factors, as we shall now record with the following well known proposition.  

\begin{prop}
Let $M_1=\Gamma_1\backslash G_1/K_1$ and $M_2=\Gamma_2\backslash G_2/K_2$ be locally symmetric spaces of noncompact type where $G_1$ and $G_2$ are connected, adjoint, semisimple Lie groups with no compact factors.  Then $M_1$ and $M_2$ are isometric if and only if there is a Lie group isomorphism $\varphi: G_1\to G_2$ such that $\varphi(K_1)=K_2$ and $\varphi(\Gamma_1)=\Gamma_2$
\end{prop}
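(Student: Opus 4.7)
The plan is to prove the two directions of the biconditional separately, with the reverse implication being largely formal and the forward implication requiring a lift-and-conjugate argument.

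For the reverse direction ($\Leftarrow$): given a Lie group isomorphism $\varphi\colon G_1 \to G_2$ satisfying $\varphi(K_1)=K_2$ and $\varphi(\Gamma_1)=\Gamma_2$, I would define the map $\bar\varphi\colon G_1/K_1 \to G_2/K_2$ by $gK_1 \mapsto \varphi(g)K_2$, observe that it is well-defined and $\varphi$-equivariant, and use $\varphi(\Gamma_1)=\Gamma_2$ to descend it to a diffeomorphism $\Gamma_1\backslash G_1/K_1 \to \Gamma_2\backslash G_2/K_2$. To upgrade this to an isometry, I would use the Cartan decompositions $\mathrm{Lie}(G_i)=\mathfrak{k}_i\oplus\mathfrak{p}_i$: the derivative of $\varphi$ is a Lie algebra isomorphism that carries $\mathfrak{p}_1$ to $\mathfrak{p}_2$ and preserves the Killing form, which (with the standard normalization for each simple factor) is the metric identified with $T_{eK_i}(G_i/K_i)$. $G_1$-equivariance then propagates the isometry from the basepoint to all of $M_1$.

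For the forward direction ($\Rightarrow$): an isometry $f\colon M_1 \to M_2$ lifts to an isometry $\tilde f\colon \widetilde{M_1}\to \widetilde{M_2}$ of universal covers. Since $G_i$ is connected, adjoint, semisimple, and has no compact factors, the natural homomorphism $G_i \to \mathrm{Isom}^\circ(\widetilde{M_i})$ is an isomorphism (injectivity uses the adjoint hypothesis; surjectivity is standard for globally symmetric spaces of noncompact type). Conjugation by $\tilde f$ therefore transports $G_1$ onto $G_2$, giving a Lie group isomorphism $\varphi\colon G_1\to G_2$. Composing $\tilde f$ with a translation by a suitable element of $G_2$ if necessary (which does not change whether $f$ is an isometry), we may assume $\tilde f$ sends the distinguished basepoint of $\widetilde{M_1}$ to that of $\widetilde{M_2}$; then $\varphi$ sends stabilizers to stabilizers, yielding $\varphi(K_1)=K_2$. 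Finally, $\tilde f$ conjugates the deck transformation group of $\widetilde{M_1}\to M_1$ onto that of $\widetilde{M_2}\to M_2$, giving $\varphi(\Gamma_1)=\Gamma_2$.

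The main obstacle is the identification $G_i \cong \mathrm{Isom}^\circ(\widetilde{M_i})$, which is precisely where the adjoint hypothesis is essential: without it, one would only get $\varphi$ on $\mathrm{Isom}^\circ$ and would then need to lift through a possibly nontrivial center. A secondary issue is the metric normalization in the $(\Leftarrow)$ direction: the Killing form determines a $G_i$-invariant metric on $G_i/K_i$ only up to a positive scalar on each simple factor, so the two metrics on $M_1$ and $M_2$ must be compatibly normalized for $\bar\varphi$ to be an actual isometry rather than merely a conformal map. Both points are standard in the theory of symmetric spaces and do not require new ideas.
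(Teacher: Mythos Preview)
The paper states this as a ``well known proposition'' and gives no proof, so there is nothing to compare against directly. Your approach is the standard one, and the reverse direction is correct (modulo the metric-normalization caveat you already flag).

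There is, however, a gap in the forward direction at the translation step. Once you replace the lift $\tilde f$ by $L_{g}\circ\tilde f$ for some $g\in G_2$ so that basepoints match, the new map is no longer a lift of $f$ unless $g\in\Gamma_2$: conjugation by it sends $\Gamma_1$ to a $G_2$-conjugate of $\Gamma_2$, not to $\Gamma_2$ itself. Thus you do not obtain $\varphi(K_1)=K_2$ and $\varphi(\Gamma_1)=\Gamma_2$ for the \emph{same} $\varphi$ by this maneuver. In fact the proposition as literally written is slightly loose on exactly this point (take $G_1=G_2$, $K_1=K_2$, and $\Gamma_2=g\Gamma_1 g^{-1}$ for generic $g$, with $\Gamma_1$ having trivial normalizer and no helpful outer automorphism of $G$). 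What your argument does establish, if you drop the translation and keep the original lift, is precisely what the paper actually uses in the paragraph following the proposition: $M_1\cong M_2$ if and only if there is a Lie group isomorphism $\varphi\colon G_1\to G_2$ with $\varphi(\Gamma_1)=\Gamma_2$. The condition $\varphi(K_1)=K_2$ then holds after replacing $K_2$ by the conjugate maximal compact $\varphi(K_1)$, which does not change $M_2$ up to isometry; this is presumably the intended reading.
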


Since the image of a maximal compact (resp. discrete) subgroup under an automorphism is always a maximal compact (resp. discrete) subgroup, understanding isometry classes of locally symmetric spaces of noncompact type with universal cover $G/K$ reduces to understanding $\mathrm{Aut}(G)$-orbits of discrete subgroups of $G$.   In particular, understanding the commensurability classes of locally symmetric spaces is equivalent to understanding the commensurability classes of discrete subgroups of $G$ up to $G$-automorphism.

Let $G$ be a semisimple Lie group and $\Gamma\subset G$ be a discrete subgroup.  The Haar measure on $G$ naturally descends to a $G$-invariant measure on $\Gamma\backslash G$.  When the Haar measure on $G$ descends to a measure of finite volume on $\Gamma\backslash G$, $\Gamma$ is called a \textbf{lattice}.  When $\Gamma\backslash G$ is compact, $\Gamma$ is said to be \textbf{cocompact} or a \textbf{uniform lattice}.  Cocompact discrete subgroups are always lattices.  A lattice is \textbf{irreducible} if, up to commensurability, it is not a product of smaller lattices.  Being cocompact, a lattice, or irreducible is an invariant of commensurability class.

Henceforth, our orbifolds will be good and our locally symmetric spaces will be of noncompact type.


\section{Arithmetic Groups and Arithmetic Locally Symmetric Spaces}\label{sectionarithmetic}

\subsection*{Arithmetic Subgroups of Algebraic $\Q$-Groups}\label{arithmeticsectionqgroups}${}$

Let $\mathbf{G}$ be an algebraic group defined over $\Q$.  
There exists a faithful $\Q$-rational embedding $\rho: \mathbf{G}\to \mathbf{GL}(V)$ for some $\Q$-vector space $V$ \cite[1.10]{B1}.   
Let $L\subset V$ be a $\Z$-lattice of $V$, i.e., a free $\Z$-module such that $L\otimes_{\Z} \Q=V$.  Define the group
$$G_{\rho, L}:=\{g\in \mathbf{G}(\Q)\ | \ \rho(g)(L)=L\}.$$
Any subgroup $\Gamma\subset \mathbf{G}(\Q)$ commensurable
with $G_{\rho, L}$ is an  \textbf{arithmetic subgroup} of $\mathbf{G}(\Q)$.  
Were we to chose a different embedding, $\rho'$, and different $\Z$-lattice, $L'$, we would have obtained a different group $G_{\rho', L'}$, however, any such $G_{\rho',L'}$ is commensurable with $G_{\rho, L}$ 
(see \cite[7.12]{B} and preceding discussion).
It follows that the commensurability class of an arithmetic group is independent of the choices of $\rho$ and $L$.    In other words, the $\Q$-isomorphism class of $\mathbf{G}$ determines a commensurability class of arithmetic groups.  

Often we will assume the existence of some embedding $\rho$ and lattice $L$, and we will denote $\mathbf{G}(\Z):=G_{\rho, L}$.  
Note however that not all arithmetic groups arise as the stabilizer of a lattice.  
This can be seen from the fact that every lattice stabilizer contains a congruence subgroup \cite[7.12]{B} but there are arithmetic groups  that  do not contain any congruence subgroups (for example, there are such groups in $\mathbf{SL}_2(\Z)$) \cite[\S2.1]{PR2}.

One way to construct algebraic $\Q$-groups is to start with a $k$-group, where $k$ is a number field, and then apply the \textbf{Weil restriction of scalars functor} $R_{k/\Q}$ \cite[\S 2.1.2]{PlRa}, \cite[\S10.3]{MaR2}.  
This functor has the property that if $\mathbf{G}$ is an algebraic $k$-group, then $R_{k/\Q}\mathbf{G}$ is an algebraic $\Q$-group and there is an abstract group isomorphism between $\mathbf{G}(k)$ and $(R_{k/\Q}\mathbf{G})(\Q)$. 
With this identification, it makes sense to talk about arithmetic subgroups of $\mathbf{G}(k)$.  Furthermore, it is not hard to see that arithmetic subgroups of $\mathbf{G}(k)$ are precisely the groups commensurable with the stabilizer of an $\mathcal{O}_k$-lattice of a $k$-vector space $V$ where there is a $k$-rational embedding of $\mathbf{G}$ into $\mathbf{GL}(V)$.  

An \textbf{absolutely (resp. absolutely almost) simple algebraic $F$-group} is an algebraic $F$-group that, upon extending scalars to $\overline{F}$, is (resp. isogenous to) a simple semisimple algebraic $\overline{F}$-group.  
For example the $\C$-group $\mathbf{SL}_n$ is absolutely almost simple but not absolutely simple since it has nontrivial center equal to the group of $n^{th}$ roots of unity.  
The semisimple $\R$-group $R_{\C/\R}\mathbf{SL}_2$, which is related to the study of hyperbolic 3-manifolds, is not absolutely almost simple, since it is $\C$-isomorphic to $\mathbf{SL}_2\times \mathbf{SL}_2$.  
If we start with an absolutely almost simple $k$-group, then $R_{k/\Q}(\mathbf{G})$ is always a semisimple $\Q$-group.  An \textbf{$F$-simple $F$-group} is an algebraic $F$-group which, up to isogeny, does not contain a proper nontrivial normal $F$-subgroup.  
Absolutely almost simple $F$-groups are $F$-simple and $R_{\C/\R}\mathbf{SL}_2$ is  $\R$-simple.  
All semisimple $k$-groups are built from absolutely almost simple groups over number fields \cite[6.21(ii)]{BoTi} and \cite[Prop. A.5.14]{CGP}.  
For the reader's convenience, we record a corollary of \cite[Prop. A.5.14]{CGP} that will be useful in what follows.

\begin{prop}\label{restrictionofscalars}
Let $\mathbf{G}$ be a semisimple $k$-simple $k$-group.  
\begin{enumerate}
\item(Existence) There exists a number field $k'$ containing $k$ and an absolutely almost simple $k'$-group $\mathbf{H}'$ such that $\mathbf{G}$ and $R_{k'/k}(\mathbf{H}')$ are $k$-isogenous.  Furthermore, if $\mathbf{G}$ is adjoint, $\mathbf{G}$ and $R_{k'/k}(\mathbf{H}')$ are $k$-isomorphic.  
\item (Uniqueness)  The pair $(\mathbf{H}',k')$ is unique in the following sense:  If $k''$ is a number field containing $k$, and $\mathbf{H}''$ an absolutely almost simple $k''$-group such that $\mathbf{G}$ and $R_{k''/k}\mathbf{H}''$ are $k$-isogenous, then there is a field isomorphism $\tau:k'\to k''$ and a $k''$-isogeny between $\mathbf{H}'\times_{\tau}\mathrm{spec}\,k'$ and $\mathbf{H}''$.
\end{enumerate}
\end{prop}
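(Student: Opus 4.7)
The plan is to deduce this from Proposition A.5.14 of \cite{CGP}, which already handles the general isogeny statement for semisimple simple groups over arbitrary fields; what remains is a short Galois-theoretic unpacking plus the adjoint refinement.

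For existence, I would first pass to a separable closure $\kbar$ of $k$ and decompose
\[
\mathbf{G}_{\kbar} \ = \ \prod_{i\in I} \mathbf{H}_i
\]
into its absolutely almost simple factors. The $k$-simplicity of $\mathbf{G}$ forces $\Gal(\kbar/k)$ to permute the factors $\mathbf{H}_i$ transitively (a nontrivial $\Gal(\kbar/k)$-stable product subgroup would descend to a proper normal $k$-subgroup, contradicting $k$-simplicity). Fix a factor $\mathbf{H}_0$, let $S\subset \Gal(\kbar/k)$ be its stabilizer, and set $k' := \kbar^S$; since $[G:S] = |I|$ is finite, $k'/k$ is a finite extension, and the $S$-action equips $\mathbf{H}_0$ with a descent datum producing an absolutely almost simple $k'$-group $\mathbf{H}'$. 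One then checks, exactly as in \cite[Prop.~A.5.14]{CGP}, that assembling the Galois translates yields a $k$-isogeny $R_{k'/k}\mathbf{H}' \to \mathbf{G}$.

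For the adjoint refinement, I would simply note that a $k$-isogeny between two adjoint semisimple $k$-groups is automatically a $k$-isomorphism: the kernel is a finite central $k$-subgroup of the target, and adjoint groups have trivial center. Since $R_{k'/k}\mathbf{H}'$ is adjoint whenever $\mathbf{H}'$ is (Weil restriction preserves the property of being adjoint), the isogeny produced above becomes an isomorphism in this case.

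For uniqueness, given a second pair $(\mathbf{H}'', k'')$ with $R_{k''/k}\mathbf{H}''$ $k$-isogenous to $\mathbf{G}$, I would compare absolutely almost simple decompositions over $\kbar$. On the $R_{k'/k}\mathbf{H}'$ side these factors are canonically indexed by $\Hom_k(k',\kbar)$, and similarly on the other side by $\Hom_k(k'',\kbar)$, with the natural transitive $\Gal(\kbar/k)$-action in each case. An isogeny between the two groups induces a bijection between these two sets of factors that is $\Gal(\kbar/k)$-equivariant, and the stabilizer subgroups (which recover $k'$ and $k''$ as fixed fields) must therefore be conjugate in $\Gal(\kbar/k)$, producing a field isomorphism $\tau: k'\to k''$. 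Comparing the descent data for $\mathbf{H}'$ and $\mathbf{H}''$ along $\tau$ then yields the claimed $k''$-isogeny between $\mathbf{H}'\times_\tau \Spec k'$ and $\mathbf{H}''$.

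The main subtlety is not any hard computation but ensuring that the $\kbar$-level identifications are Galois-equivariant so that they descend. This is exactly the point of \cite[Prop.~A.5.14]{CGP}, so in practice I would quote that result for the isogeny existence/uniqueness and only carry out by hand (i) the reduction from semisimple $k$-simple to the restriction-of-scalars setting via transitive Galois action on simple factors, and (ii) the upgrade from isogeny to isomorphism in the adjoint case.
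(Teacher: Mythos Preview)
Your approach is exactly the paper's: the paper does not prove this proposition at all but simply records it as a corollary of \cite[Prop.~A.5.14]{CGP}, which is precisely the reference you invoke and unpack. Your sketch is correct and in fact supplies more detail than the paper gives; one tiny slip is that in the adjoint step the kernel of the isogeny is central in the \emph{source} $R_{k'/k}\mathbf{H}'$, not the target, so you should note that one may take $\mathbf{H}'$ adjoint (replacing it by its adjoint quotient if necessary), making $R_{k'/k}\mathbf{H}'$ adjoint and hence the isogeny an isomorphism.
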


\subsection*{Arithmetic Lattices in Semisimple Lie Groups}\label{arithmeticsectionliegroups}${}$

Let $\overline{G}$ be a connected, adjoint, semisimple Lie group with no compact factors.  Let $\Gamma \subset \overline{G}$ be a lattice.  
Then $\Gamma$ is \textbf{arithmetic} if there exists a semisimple algebraic $\Q$-group $\mathbf{G}$ and a surjective analytic homomorphism $\pi: \mathbf{G}(\R)^\circ \to \overline{G}$ with compact kernel such that $\pi(\mathbf{G}(\Z)\cap \mathbf{G}(\R)^\circ)$ and $\Gamma$ are commensurable up to $\overline{G}$-automorphism.  

\centerline{\hfill
\xymatrix{
	&\mathbf{G}(\Z)\cap \mathbf{G}(\R)^\circ \ar[d]_\pi \ar[rr]	&&\mathbf{G}(\R)^\circ\ar[d]^\pi\ar[r]&\mathbf{G}(\R)  \\
	&\pi(\mathbf{G}(\Z)\cap \mathbf{G}(\R)^\circ)\ar[r]^-{\sim_{c}}&\varphi(\Gamma) \ar[l]\ar[r]		&\overline{G}&}\hfill}

In what follows, we shall say that \textbf{$\mathbf{G}$ gives rise to $\Gamma$}.    
If $\mathbf{H}\subset \mathbf{G}$ is a $\Q$-simple factor, we will always assume that it is $\R$-isotropic, since otherwise $\mathbf{H}(\R)^\circ\subset \mathrm{ker}(\pi)$, and we may just replace $\mathbf{G}$ with $\mathbf{G}/\mathbf{H}$.  
Observe that if $\Gamma, \Gamma' \subset \overline{G}$ are subgroups that are commensurable up to $\overline{G}$-automorphism and one is an arithmetic lattice, then so is the other.

It may appear as though arithmetic lattices are rather specific and potentially rare type of lattice.  
However, thanks to Margulis's  arithmeticity theorem \cite{Mar} and the work of Gromov and Schoen \cite{GS}, irreducible lattices in groups not locally isomorphic to $\mathbf{SO}(n, 1)$ or $\mathbf{SU}(n, 1)$ are always arithmetic.

\subsection*{Arithmetic Locally Symmetric Spaces}\label{arithmeticsectionsymspaces}${}$

In this section we adopt the following notation:
\begin{itemize}
\item $\overline{G}$ is a connected, adjoint, semisimple Lie group with no compact factors,
\item $\overline{K}\subset \overline{G}$ is a maximal compact subgroup,
\item $\mathbf{G}$ is a semisimple algebraic $\Q$-group with no $\R$-anisotropic $\Q$-simple factors,
\item $\mathbf{G}(\Z)\subset \mathbf{G}(\Q)$ is the lattice stabilizer $G_{\rho, L}$ for some choice of $\rho$ and $L$,
\item $\pi$ is projection $\pi:\mathbf{G}(\R)^\circ\to \overline{G}$ with compact kernel,
\item $\Gamma\subset \overline{G}$ is a subgroup commensurable up to $\overline{G}$-automorphism to $\pi(\mathbf{G}(\Z)\cap \mathbf{G}(\R)^\circ)$,
\item $\varphi\in \mathrm{Aut}(\overline{G})$ is such that $\pi(\mathbf{G}(\Z) \cap \mathbf{G}(\R)^\circ)$ and $\varphi(\Gamma)$ are commensurable,
\item $K\subset \mathbf{G}(\R)$ is a maximal compact subgroup containing $\pi^{-1}(\varphi(\overline{K}))$.
\end{itemize}
An \textbf{arithmetic locally symmetric space (of noncompact type)}
is a space $M$ of the form $\Gamma \backslash \overline{G} / \overline{K}$.  
When $\Gamma$ is torsion-free, $M$ is a Riemannian manifold, and since every $\Gamma$ has a finite index torsion-free subgroup \cite{Sel}, $M$ is always a good Riemannian orbifold in the sense of Thurston \cite[Chp. 13]{Th}.

In this paper, we primarily study totally geodesic subspaces of arithmetic locally symmetric spaces.
As we show in Theorem \ref{totallygeoarearith}, totally geodesic subspaces inherit arithmeticity from its ambient space.

\begin{thm}\label{totallygeoarearith}
Let $M$ be an arithmetic locally symmetric space and let $N\subset M$ be a nonflat, finite volume, totally geodesic subspace.  
Then $N$ is arithmetic.  
\end{thm}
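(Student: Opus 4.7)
The plan is to lift $N$ up to the globally symmetric universal cover, identify it with a quotient $H_N/(H_N\cap \overline{K})$ for a semisimple subgroup $H_N\subset \overline{G}$ via Lemma \ref{prop24}, lift $H_N$ further up along $\pi$ to the algebraic side, and then extract a $\Q$-algebraic subgroup of $\mathbf{G}$ by taking the Zariski closure of an arithmetic subgroup.

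First I would reduce to the case $\Gamma=\pi(\mathbf{G}(\Z)\cap\mathbf{G}(\R)^\circ)$ (the ambient automorphism $\varphi$ can be absorbed into the isometry class of $N$). Choose a basepoint $p_0\in N$ and lift to the identity coset in $\overline{G}/\overline{K}$; lift $N$ to its corresponding totally geodesic subspace $\widetilde{N}\subset\overline{G}/\overline{K}$ passing through this point. Because $N$ is of noncompact type (it is a finite-volume nonflat locally symmetric space), Lemma \ref{prop24}(2) produces a connected semisimple subgroup $H_N\subset\overline{G}$ with no compact factors such that $\widetilde{N}=H_N/(H_N\cap\overline{K})$. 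Setting $\Delta:=\Gamma\cap H_N$, the finite volume of $N$ forces $\Delta$ to be a lattice in $H_N$. Now pull back along $\pi$: let $\widetilde{H}_N:=\pi^{-1}(H_N)^\circ\subset\mathbf{G}(\R)^\circ$ (a semisimple real Lie subgroup, since $\ker\pi$ is compact and $H_N$ is semisimple) and let $\widetilde{\Delta}:=\mathbf{G}(\Z)\cap\widetilde{H}_N$; a short check shows $\widetilde{\Delta}$ is commensurable with $\pi^{-1}(\Delta)\cap\mathbf{G}(\Z)$ and hence is a lattice in $\widetilde{H}_N$.

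Next, let $\mathbf{H}$ be the Zariski closure of $\widetilde{\Delta}$ in $\mathbf{G}$. Because $\widetilde{\Delta}\subset\mathbf{G}(\Q)$, Galois invariance forces $\mathbf{H}$ to be defined over $\Q$, and the closure of a subgroup under multiplication and inversion is a subgroup, so $\mathbf{H}$ is an algebraic $\Q$-subgroup of $\mathbf{G}$. Applying the Borel density theorem to the lattice $\widetilde{\Delta}\subset\widetilde{H}_N$ (in the form that handles the potential compact factors introduced by $\ker\pi$, using that $\widetilde{H}_N$ is semisimple with finite center), one concludes that $\widetilde{\Delta}$ is Zariski dense in $\widetilde{H}_N$; hence $\mathbf{H}(\R)^\circ=\widetilde{H}_N$. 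In particular $\mathbf{H}$ is a semisimple $\Q$-group, and the restriction $\pi|_{\mathbf{H}(\R)^\circ}\colon\mathbf{H}(\R)^\circ\to H_N$ is a surjective analytic homomorphism with compact kernel.

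To finish, observe that $\mathbf{H}(\Z)\cap\mathbf{H}(\R)^\circ=\mathbf{G}(\Z)\cap\widetilde{H}_N=\widetilde{\Delta}$, and $\pi(\widetilde{\Delta})$ is a finite-index subgroup of $\Delta$ by construction. This exhibits $\Delta$ as an arithmetic lattice in $H_N$ arising from the $\Q$-group $\mathbf{H}$, and so $N=\Delta\backslash H_N/(H_N\cap\overline{K})$ is an arithmetic locally symmetric space. The main obstacle I anticipate is step two: ensuring that the $\Q$-Zariski closure of $\widetilde{\Delta}$ has real identity component exactly $\widetilde{H}_N$, rather than something larger or smaller. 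This requires invoking Borel density in a form valid for semisimple ambient groups that may acquire compact factors from $\ker\pi$, and requires checking that $\widetilde{\Delta}$ really sits in $\mathbf{G}(\Z)$ as a lattice in $\widetilde{H}_N$ (which is where choosing the basepoint so that $p_0\in N$ and massaging $\pi$-preimages is essential).
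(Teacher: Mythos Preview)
Your approach is essentially the same as the paper's: lift $N$ to a semisimple subgroup via Lemma~\ref{prop24}, pull back along $\pi$, take a Zariski closure to obtain a $\Q$-subgroup, and invoke Borel density. The paper packages the Zariski-closure/Borel-density step as a separate lemma (Lemma~\ref{arithmeticsubgrp}) and then checks arithmeticity using the adjoint representation and a lattice in $\mathrm{Lie}(\mathbf{G})$, but the architecture is the same.

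The one place where the paper differs---and this is exactly the obstacle you flag at the end---is in how it handles the compact kernel of $\pi$. Rather than defining $\widetilde{H}_N=\pi^{-1}(H_N)^\circ$ and then worrying about compact factors when applying Borel density, the paper sets $H$ to be the connected component of the intersection of $\pi^{-1}(\varphi^{-1}(\overline{H}))$ with the \emph{noncompact factors} of $\mathbf{G}(\R)$. This guarantees $H$ is semisimple with no compact factors from the outset, so Borel density applies in its standard form and the Zariski closure of $\Lambda=\mathbf{G}(\Z)\cap H$ is exactly the $\R$-group with identity component $H$. Your version, keeping the full preimage, forces you to argue around possible compact pieces of $\widetilde{H}_N$; this is not fatal (the compact part dies under $\pi$ anyway, so the $\Q$-group you obtain would still give the right $H_N$ downstairs), but the paper's choice makes the argument cleaner and removes the need for any nonstandard form of Borel density.
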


\begin{proof}
By Lemma \ref{prop24}, there exists a connected, semisimple Lie subgroup $\overline{H}\subset \overline{G}$ with no compact factors such that  $\widetilde{N}:= \overline{H} / (\overline{K} \cap \overline{H})$ is the universal cover of $N$ and $\overline{\Lambda}  \backslash \overline{H} / (\overline{K} \cap \overline{H})$ is commensurable to $N$ where $\overline{\Lambda} := \Gamma \cap \overline{H}$ is a lattice in $\overline{H}$. 
Let $H$ denote the connected component of the intersection of $\pi^{-1}(\varphi^{-1}(\overline{H}))$ with the noncompact factors of $\mathbf{G}(\R)$.  (This group can also be viewed as the unique connected Lie subgroup of $\mathbf{G}(\R)$ with Lie algebra $\mathrm{Lie}(\varphi^{-1}(\overline{H}))$.)    
It follows that 
$N':=\Lambda \backslash H/ (K^\circ \cap H)$, where $\Lambda:=\mathbf{G}(\Z) \cap H$, is commensurable with $N$.   
Arithmeticity is an invariant of commensurability class so it suffices to show the arithmeticity of $N'$.  The result then follows by Lemma \ref{arithmeticsubgrp} below.
\end{proof}

\begin{lem}\label{arithmeticsubgrp}
Let \begin{enumerate}
\item $\mathbf{G}$ be an  semisimple $\Q$-group, 
\item $H\subset \mathbf{G}(\R)$ be a connected semisimple Lie subgroup with no compact factors, and 
\item $\Lambda\subset \mathbf{G}(\Z)$ be a subgroup which is also a lattice in $H$.  
\end{enumerate}
Then $H=\mathbf{H}(\R)^\circ$ where $\mathbf{H}\subset \mathbf{G}$ is a semisimple $\Q$-subgroup and $\Lambda\subset \mathbf{H}(\Q)$ is arithmetic.
\end{lem}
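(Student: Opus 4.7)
The plan is to let $\mathbf{H}$ be the identity component of the Zariski closure of $\Lambda$ inside $\mathbf{G}$, and then to show that $\mathbf{H}$ is a semisimple $\Q$-subgroup of $\mathbf{G}$ satisfying $\mathbf{H}(\R)^\circ = H$, and that $\Lambda$ has finite index in $\mathbf{H}(\Z):=\mathbf{H}\cap \mathbf{G}(\Z)$.

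First I would check that this Zariski closure is defined over $\Q$. Since $\Lambda\subset \mathbf{G}(\Q)$, the ideal in $\overline{\Q}[\mathbf{G}]$ of polynomials vanishing on $\Lambda$ is stable under $\Gal(\overline{\Q}/\Q)$, so the closure and its identity component $\mathbf{H}$ are $\Q$-subgroups of $\mathbf{G}$.  In particular $\Lambda\subset \mathbf{H}(\Q)$.

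Next I would invoke the Borel density theorem.  Because $\Lambda$ is a lattice in the connected semisimple Lie group $H$ without compact factors, $\Lambda$ is Zariski dense in the real algebraic hull of $H$ in $\mathbf{G}(\R)$.  Since the real Zariski closure of $\Lambda$ in $\mathbf{G}(\R)$ is precisely $\mathbf{H}(\R)$ (up to passing to connected components), this identifies $\mathbf{H}(\R)^\circ$ with the real algebraic hull of $H$, giving $H\subset \mathbf{H}(\R)^\circ$.  To upgrade this containment to equality, I would appeal to the classical theorem (Chevalley) that a semisimple Lie subalgebra of $\mathfrak{gl}_N(\R)$ is algebraic: it is the Lie algebra of some algebraic subgroup.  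Since $\mathrm{Lie}(H)$ is semisimple, it coincides with the Lie algebra of its real algebraic hull, so $H=\mathbf{H}(\R)^\circ$; correspondingly $\mathbf{H}$ is semisimple.

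Finally, for arithmeticity: using a $\Q$-embedding $\mathbf{G}\hookrightarrow \mathbf{GL}(V)$ and a lattice $L\subset V$ defining $\mathbf{G}(\Z)$, the group $\mathbf{H}(\Z)=\mathbf{H}\cap \mathbf{G}(\Z)$ is a lattice in $\mathbf{H}(\R)$ by Borel--Harish-Chandra (since $\mathbf{H}$ is semisimple over $\Q$).  We have $\Lambda\subset \mathbf{H}(\Z)$, and since $H=\mathbf{H}(\R)^\circ$ has finite index in $\mathbf{H}(\R)$, $\Lambda$ is itself a lattice in $\mathbf{H}(\R)$; sitting inside another lattice forces $[\mathbf{H}(\Z):\Lambda]<\infty$, so $\Lambda$ is arithmetic in $\mathbf{H}(\Q)$.

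The main obstacle I expect is Step 2, the bookkeeping between the complex Zariski closure $\mathbf{H}$ (which we need for the $\Q$-structure) and the real Zariski closure (to which Borel density directly applies), together with the passage to connected components in the Zariski and Euclidean topologies; the algebraicity of semisimple Lie subalgebras is what glues these two pictures together.
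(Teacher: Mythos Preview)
Your proposal is correct and follows essentially the same route as the paper: both arguments use Chevalley's theorem that semisimple Lie subalgebras are algebraic to realize $H$ as $\mathbf{H}(\R)^\circ$, Borel density to make $\Lambda$ Zariski dense in $\mathbf{H}$, and the fact that the Zariski closure of a subgroup of $\mathbf{G}(\Q)$ is defined over $\Q$. The only cosmetic difference is in the arithmeticity step: the paper exhibits an explicit $\mathcal{O}_k$-lattice (namely $L\cap \mathrm{Lie}(\mathbf{H})$ inside $\mathrm{Lie}(\mathbf{G})$ via the adjoint representation) that $\Lambda$ stabilizes, whereas you invoke Borel--Harish-Chandra abstractly and use that a lattice contained in a lattice has finite index; both are valid, and your version is arguably cleaner. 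One small bookkeeping point: your claim ``$\Lambda\subset \mathbf{H}(\Q)$'' for the \emph{identity component} $\mathbf{H}$ presupposes that the Zariski closure of $\Lambda$ is already connected, which is true but only becomes visible once you have invoked Borel density (since the Zariski closure of $\Lambda$ then coincides with the Zariski-connected closure of $H$); you may want to reorder slightly or simply take $\mathbf{H}$ to be the full Zariski closure and observe afterwards that it is connected.
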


\begin{proof}
Since $H$ is a semisimple Lie group sitting inside the real points of a linear group, $H$ is the connected component of the real points of some semisimple $\R$-subgroup $\mathbf{H}\subset \mathbf{G}$.  
By Borel's Density Theorem  \cite{B60} $\Lambda$ is Zariski dense in $\mathbf{H}$.  
The Zariski closure of an abstract subgroup sitting inside the $\Q$-points of a group is also a $\Q$-group \cite[Chp 1 Prop 1.3(b)]{B1}.  
Hence $\mathbf{H}$ is defined over $\Q$.  
Now let $V:=\mathrm{Lie}(\mathbf{G})$ and $W:=\mathrm{Lie}(\mathbf{H})$.  
The adjoint representation $\mathrm{Ad}: \mathbf{G}\to \mathbf{GL}(V)$ is defined over $\Q$.
There exists a lattice $L\subset V$ which $\Gamma$ stabilizes \cite[Prop 7.12]{B}.  
Since $\Lambda$ stabilizes $W$, it stabilizes $L\cap W$ and hence $\Lambda$ is an arithmetic subgroup of $H$.
\end{proof}

If $\mathbf{G}_1$ and $\mathbf{G}_2$ are absolutely simple algebraic groups over number fields $k_1$ and $k_2$, respectively,  by \cite[Prop 2.5]{PR}, they give rise to commensurable arithmetic groups if and only if there is a field isomorphism $\tau:k_1\to k_2$ such that $\mathbf{G}_2$ and $\mathbf{G}_1\times_{\tau}\mathrm{spec}\,k_2$ are isomorphic as $k_2$-groups.
We now give the following slight generalization of \cite[Prop 2.5]{PR} that is useful when looking for totally geodesic subspaces.

\begin{prop}\label{commimpliesisothm}
Let $M_1$ and $M_2$ be arithmetic locally symmetric spaces arising from semisimple $\Q$-groups $\mathbf{G}_1$ and $\mathbf{G}_2$ respectively.  
Then $M_1$ and $M_2$ are commensurable if and only if  
$\mathbf{G}_1$ and $\mathbf{G}_2$ are $\Q$-isogenous.
\end{prop}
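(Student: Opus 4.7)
The plan is to prove both directions separately, with the reverse direction reducing to the absolutely almost simple case handled by \cite[Prop.~2.5]{PR}.

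For the forward direction, suppose $\varphi: \mathbf{G}_1\to \mathbf{G}_2$ is a $\Q$-isogeny. Then $\varphi$ induces a Lie group homomorphism $\varphi_\R: \mathbf{G}_1(\R)^\circ\to \mathbf{G}_2(\R)^\circ$ which is surjective with finite kernel. Picking a faithful $\Q$-rational representation $\rho_2:\mathbf{G}_2\to \mathbf{GL}(V_2)$, the composition $\rho_2\circ\varphi$ is a $\Q$-rational representation of $\mathbf{G}_1$, so $\mathbf{G}_1(\Z)$ (in any realization) stabilizes a $\Z$-lattice in $V_2$ and hence its image $\varphi(\mathbf{G}_1(\Z))$ is commensurable with $\mathbf{G}_2(\Z)$. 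Post-composing with $\pi_2$ and identifying $\overline{G}_1$ with $\overline{G}_2$ via the Lie group isomorphism descended from $\varphi_\R$ (possible since both $\pi_i$ have compact kernel and both targets are adjoint with no compact factors), the images of $\Gamma_1$ and $\Gamma_2$ become commensurable up to $\overline{G}_2$-automorphism; hence $M_1$ and $M_2$ are commensurable.

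For the reverse direction, commensurability of $M_1$ and $M_2$ together with the proposition preceding Lemma \ref{prop24} gives a Lie group isomorphism $\psi:\overline{G}_1\to \overline{G}_2$ under which $\psi(\Gamma_1)$ and $\Gamma_2$ are commensurable. So we may pass to a single semisimple real Lie group $\overline{G}$ containing two arithmetic lattices $\Gamma_1, \Gamma_2$, coming respectively from $\mathbf{G}_1$ and $\mathbf{G}_2$ via projections $\pi_i: \mathbf{G}_i(\R)^\circ\to \overline{G}$ with compact kernels, that are commensurable up to $\overline{G}$-automorphism. By Proposition \ref{restrictionofscalars}, replacing each $\mathbf{G}_i$ by its adjoint form (which doesn't change the commensurability class of the lattice), we may decompose $\mathbf{G}_i=\prod_{j}R_{k_{i,j}/\Q}\mathbf{H}_{i,j}$ where each $\mathbf{H}_{i,j}$ is absolutely almost simple over a number field $k_{i,j}$, and by our standing assumption each factor is $\R$-isotropic.

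The decomposition of $\mathbf{G}_i$ into $\Q$-simple factors induces a decomposition of $\overline{G}$ into almost-direct products of real simple factors, and the projection of each arithmetic lattice $\Gamma_i$ onto any proper product of such factors is non-discrete (by Margulis's arithmeticity and irreducibility of lattices from $\Q$-simple factors). Thus commensurability of $\Gamma_1$ with $\varphi(\Gamma_2)$ for some $\varphi\in\Aut(\overline{G})$ forces a bijection between the sets of $\Q$-simple factors of $\mathbf{G}_1$ and $\mathbf{G}_2$ under which paired factors give rise to commensurable (up to automorphism) irreducible arithmetic lattices in the same product of real simple groups. Applying \cite[Prop.~2.5]{PR} to each matched pair of absolutely almost simple groups $\mathbf{H}_{1,j}$ and $\mathbf{H}_{2,j'}$ produces a field isomorphism $\tau:k_{1,j}\to k_{2,j'}$ with $\mathbf{H}_{2,j'}\cong \mathbf{H}_{1,j}\times_\tau\spec k_{2,j'}$, which yields a $\Q$-isomorphism $R_{k_{1,j}/\Q}\mathbf{H}_{1,j}\cong R_{k_{2,j'}/\Q}\mathbf{H}_{2,j'}$. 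Assembling these across factors produces a $\Q$-isogeny $\mathbf{G}_1\to \mathbf{G}_2$.

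The main obstacle will be the bookkeeping in the reverse direction: rigorously matching the $\Q$-simple factors of $\mathbf{G}_1$ and $\mathbf{G}_2$ via the data of commensurable arithmetic subgroups. This requires care because a single $\Q$-simple factor can contribute multiple simple real factors to $\overline{G}$ (one per archimedean place at which the factor is isotropic), and the lattice's projection to any partial product is never discrete, so the irreducible decomposition of the lattice (coming from the $\Q$-simple decomposition) is canonical and automorphism-equivariant. Once this matching is established, the reduction to \cite[Prop.~2.5]{PR} is immediate.
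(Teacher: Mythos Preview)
Your proposal is correct and follows essentially the same approach as the paper: the forward direction uses that a $\Q$-isogeny carries arithmetic subgroups to arithmetic subgroups (the paper passes to adjoint groups and cites \cite[Cor.~7.13(2)]{B}, while you work directly with the isogeny, but this is a cosmetic difference), and the reverse direction decomposes each $\mathbf{G}_i$ into $\Q$-simple factors via Proposition~\ref{restrictionofscalars}, matches factors using irreducibility of the corresponding lattices, and then invokes \cite[Prop.~2.5]{PR} on each matched pair of absolutely almost simple groups. The paper packages the reverse direction into the separate Lemma~\ref{commimpliesiso} and is slightly more explicit about producing the $\R$- or $\C$-isomorphism between the absolutely simple factors needed as input to \cite[Prop.~2.5]{PR}, but your sketch contains the same ideas.
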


\begin{proof}
First suppose $\mathbf{G}_1$ and $\mathbf{G}_2$ are $\Q$-isogenous.  Then $\mathrm{Ad}_{\mathbf{G}_1}(\mathbf{G}_1)$ and $\mathrm{Ad}_{\mathbf{G}_2}(\mathbf{G}_2)$ are $\Q$-isomorphic via a $\Q$-isomorphism $\psi$.
Since $M_i$ is commensurable with $\mathrm{Ad}_{\mathbf{G}_i}(\mathbf{G}_i(\Z)) \backslash \mathrm{Ad}_{\mathbf{G}_i}(\mathbf{G}_i(\R)) / \mathrm{Ad}_{\mathbf{G}_i}(K_i)$.
The result then immediately follows from the fact that $\psi(\mathrm{Ad}_{\mathbf{G}_1}(\mathbf{G}_1(\Z)))$ and $\mathrm{Ad}_{\mathbf{G}_2}(\mathbf{G}_2(\Z))$ are commensurable \cite[Cor 7.13(2)]{B}.
 
Now suppose $M_1$ and $M_2$ are commensurable.  By assumption, there exists a connected adjoint semisimple Lie group with no compact factors, $\overline{G}$, and two arithmetic lattices $\Gamma_1,\Gamma_2\subset \overline{G}$ which are commensurable up to $\overline{G}$-automorphism, say $\psi$, such that $M_1=\Gamma_1 \backslash \overline{G}/ K$ and $M_2=\Gamma_2 \backslash \overline{G}/ \psi(K)$ where $K$ is a maximal compact subgroup.  Replacing $\mathbf{G}_i$ with $\mathrm{Ad}_{\mathbf{G}_i}(\mathbf{G}_i)$, the result then follows from Lemma \ref{commimpliesiso} below.
\end{proof}

\begin{lem}\label{commimpliesiso}
Let $\overline{G}$ be a connected adjoint semisimple Lie group with no compact factors.  Let $\Gamma_1, \Gamma_2\subset \overline{G}$ be arithmetic lattices which are commensurable up to $\overline{G}$-automorphism.  Let $\mathbf{G}_1$ and $\mathbf{G}_2$ be the connected adjoint semisimple $\Q$-groups with no $\R$-anisotropic $\Q$-simple factors giving rise to $\Gamma_1$ and $\Gamma_2$ respectively.  Then $\mathbf{G}_1$ and $\mathbf{G}_2$ are $\Q$-isomorphic.
\end{lem}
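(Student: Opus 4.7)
The plan is to reduce to the absolutely almost simple case, which is \cite[Prop.~2.5]{PR}, by decomposing $\mathbf{G}_1$ and $\mathbf{G}_2$ into their $\Q$-simple factors and matching those factors against the simple Lie factors of $\overline{G}$.

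First I set up the decompositions. Write $\mathbf{G}_i = \mathbf{H}_{i,1}\times\cdots\times \mathbf{H}_{i,r_i}$ as a product of $\Q$-simple factors. Since $\mathbf{G}_i$ is adjoint, Proposition \ref{restrictionofscalars} gives a $\Q$-isomorphism $\mathbf{H}_{i,j}\cong R_{k_{i,j}/\Q}(\mathbf{H}'_{i,j})$ for a uniquely determined number field $k_{i,j}$ and absolutely almost simple $k_{i,j}$-group $\mathbf{H}'_{i,j}$. Decompose $\overline{G}=L_1\times\cdots\times L_s$ into its simple Lie factors. Because we have excluded $\R$-anisotropic $\Q$-simple factors, each $\pi_i(\mathbf{H}_{i,j}(\R)^\circ)$ is a non-trivial connected closed normal subgroup of $\overline{G}$, hence equals $\prod_{\ell\in S_{i,j}} L_\ell$ for some non-empty $S_{i,j}\subseteq\{1,\ldots,s\}$, and $\{S_{i,j}\}_{j}$ is a partition of $\{1,\ldots,s\}$.

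Next I match the two partitions. Set $\Lambda_{i,j}:=\pi_i(\mathbf{H}_{i,j}(\Z)\cap \mathbf{H}_{i,j}(\R)^\circ)$. By Margulis's irreducibility criterion applied to the arithmetic lattice arising from the $\Q$-simple group $\mathbf{H}_{i,j}$, the group $\Lambda_{i,j}$ is an irreducible arithmetic lattice in $\prod_{\ell\in S_{i,j}} L_\ell$, and $\pi_i(\mathbf{G}_i(\Z)\cap \mathbf{G}_i(\R)^\circ)$ is commensurable with $\prod_j \Lambda_{i,j}$. The coarsest decomposition of $\overline{G}$ in which an arithmetic lattice becomes a product up to commensurability is intrinsic: it is an invariant of the commensurability class and is preserved by $\overline{G}$-automorphisms. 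Since $\Gamma_1$ and $\psi(\Gamma_2)$ are commensurable, $\psi$ permutes the simple Lie factors of $\overline{G}$ in a way that sets up a bijection $j\leftrightarrow \sigma(j)$ between the index sets with $\psi\big(\prod_{\ell\in S_{1,j}} L_\ell\big)=\prod_{\ell\in S_{2,\sigma(j)}} L_\ell$. After relabeling the factors of $\mathbf{G}_2$ and replacing $\psi$ by an appropriate modification, I may assume $r_1=r_2=:r$ and, for every $j$, the lattices $\Lambda_{1,j}$ and $\psi(\Lambda_{2,j})$ are commensurable arithmetic lattices in the common ambient group $\prod_{\ell\in S_j} L_\ell$.

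Now I invoke the absolutely almost simple case. For each $j$, the groups $\mathbf{H}_{1,j}$ and $\mathbf{H}_{2,j}$ are adjoint and $\Q$-simple and give rise, via restriction of scalars from absolutely almost simple groups $\mathbf{H}'_{1,j}$ and $\mathbf{H}'_{2,j}$, to commensurable (up to $\overline{G}$-automorphism) arithmetic lattices in $\prod_{\ell\in S_j}L_\ell$. \cite[Prop.~2.5]{PR} then produces a field isomorphism $\tau_j\colon k_{1,j}\to k_{2,j}$ and a $k_{2,j}$-isomorphism $\mathbf{H}'_{2,j}\cong \mathbf{H}'_{1,j}\times_{\tau_j}\mathrm{spec}\,k_{2,j}$. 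Applying $R_{k_{2,j}/\Q}$ and taking the product over $j$ yields the desired $\Q$-isomorphism $\mathbf{G}_1\cong\mathbf{G}_2$.

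The main obstacle is the second paragraph: justifying that the partition $\{S_{i,j}\}$ is intrinsically attached to the commensurability class of the arithmetic lattice, so that $\psi$ matches $\{S_{1,j}\}$ with $\{S_{2,j}\}$. This is the only place where one really uses Margulis-type input (irreducibility of the lattices $\Lambda_{i,j}$ and the invariance of irreducible decomposition under commensurability and continuous automorphisms of $\overline{G}$); everything else is a formal reduction to the absolutely almost simple case already handled in the literature.
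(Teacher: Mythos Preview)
Your proof is correct and follows essentially the same strategy as the paper: decompose each $\mathbf{G}_i$ into $\Q$-simple factors, write each factor as a restriction of scalars of an absolutely almost simple group, match the factors using the automorphism $\psi$, and reduce to \cite[Prop.~2.5]{PR}.

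The only real difference is in the factor-matching step. You argue at the Lie-group level that the partition $\{S_{i,j}\}$ of the simple factors of $\overline{G}$ is an intrinsic invariant of the commensurability class (invoking Margulis's irreducibility criterion, though the irreducibility of an arithmetic lattice arising from a $\Q$-simple group is more elementary than that). The paper instead lifts $\psi$ to an $\R$-rational isomorphism between the $\R$-isotropic parts $\mathbf{H}_i\subset\mathbf{G}_i$ (via the adjoint action on $\mathrm{Lie}(\overline{G})\otimes_\R\C$), uses Borel density to make the $\Lambda_{i,j}$ Zariski-dense, and then observes that each $\Lambda_{i,j}$ is normal in $\Lambda_i$ and irreducible, forcing $\psi$ to permute them. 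Your route is cleaner to state; the paper's route is more concrete and avoids any appeal to the general machinery of irreducible decompositions of lattices. Either way the reduction to the absolutely almost simple case is the same.
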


\begin{proof}
Let $\psi$ be an analytic automorphism of $\overline{G}$ for which $\Gamma_1$ and $\psi(\Gamma_2)$ are commensurable.    
Let $\mathbf{H}_i\subset \mathbf{G}_i$ be the product of the connected $\R$-simple $\R$-isotropic components of $\mathbf{G}_i$.  
Then $\pi_i|_{\mathbf{H}_i(\R)^\circ}:\mathbf{H}_i(\R)^\circ\to \overline{G}$ is an isomorphism.
Picking sufficiently small finite index $\Gamma'_i\subset \Gamma_i$ which are isomorphic via $\psi$,  we may identify $\varphi_i^{-1}(\Gamma_i)$ with a finite index subgroup $\Lambda_i:=\pi_i|_{\mathbf{H}_i(\R)^\circ}^{-1}(\varphi_i^{-1}(\Gamma_i))\subset \mathbf{H}_i(\R)^\circ\cap \mathbf{G}_i(\Q)$.  

Since $\pi_i$ induces an $\R$-rational isomorphism between $\mathbf{H}_i$ and $\mathbf{Aut}(\mathrm{Lie}(\overline{G})\otimes_\R \C)$, and $\psi$ induces an $\R$-rational automorphism on  $\mathbf{Aut}(\mathrm{Lie}(\overline{G})\otimes_\R \C)$, it follows that there is an $\R$-rational isomorphism, which we also denote $\psi$, from $\mathbf{H}_1$ to $\mathbf{H}_2$ which sends $\Lambda_1$ to $\Lambda_2$.

For each $i$, by \cite[6.21 (ii)]{BoTi}, $\mathbf{G}_i\cong \prod_{j=1}^{r_i} R_{k_{i,j}/\Q} \mathbf{S}_{i,j}$ where $\mathbf{S}_j$ is an absolutely simple group over a number field $k_{i,j}$.  
Then $\Lambda_{i,j}:=\Lambda_i \cap (R_{k_{i,j}/\Q}(\mathbf{S}_{i,j}))(\Q)$ is an arithmetic group in $(R_{k_{i,j}/\Q}(\mathbf{S}_{i,j}))(\Q)=\mathbf{S}_{i,j}(k_{i,j})$ \cite[6.11]{BoHC}.  
Borel's Density Theorem \cite{B65} implies that  $\Lambda_{i,j}$ is Zariski dense in $\mathbf{S}_{i,j}$.
Since each $\Lambda_{i,j}$ is a normal subgroup of $\Lambda_i$ and an irreducible lattice in $(R_{k_{i,j}/\Q}(\mathbf{S}_{i,j}))(\R)$, the isomorphism $\psi$ must send each $\Lambda_{1,j}$ to some $\Lambda_{2, j'}$, from which we conclude $r_1=r_2:=r$ and $\psi$ induces a permutation also denoted $\psi\in S_r$.  
Our assumption on $\Q$-simple factors implies that each $R_{k_{i,j}/\Q} \mathbf{S}_{i,j}$ contains an $\R$-simple $\R$-isotropic factor.  
Since $\psi$ sends $\R$-isotropic $\R$-simple factors of $R_{k_{1,j}/\Q} \mathbf{S}_{1,j}$ to $\R$-isotropic $\R$-simple factors of $R_{k_{2,\psi(j)}/\Q}( \mathbf{S}_{2,\psi(j)})$, we conclude $\mathbf{S}_{2,j}$ and $\mathbf{S}_{2,\psi(j)}$ have the same Killing--Cartan type.  
Let $\mathbf{H}_{i,j}$ be a fixed $\R$-simple $\R$-isotropic component of $R_{k_{i,j}/\Q} \mathbf{S}_{i,j}$. 
Then $\psi$ induces an $F$-isomorphism between $\mathbf{S}_{1,j}$ and $\mathbf{S}_{2,\psi(j)}$, where $F=\R$ when $\mathbf{H}_{1,j}$ is absolutely simple, and $F=\C$ otherwise.  
Furthermore, this isomorphism sends $\Lambda_{1,j}$ to $\Lambda_{2,\psi(j)}$, hence by \cite[Prop 2.5]{PR}, $k_{1,j}$ and $k_{2,\psi(j)}$ are isomorphic and, letting $k_j$ denote this isomorphism class (and changing the base of these groups), $\mathbf{S}_{1,j}$ and $\mathbf{S}_{2,\psi(j)}$ are $k_j$-isomorphic.  
The conclusion follows.
\end{proof}


\section{Arithmetic Locally Symmetric Spaces Arising From Quadratic Forms}\label{sectionquadforms}

In this section we discuss the theory of quadratic forms and the results we need to construct and analyze arithmetic locally symmetric spaces coming from quadratic forms.  
For a complete treatment of the classical theory of quadratic forms over local and global fields, we refer the reader to \cite{OM}, \cite{Sch}, and \cite{Lam}.   

Recall $F$ is a field that is not of characteristic 2.  
In what follows, $(V,q)$ will denote a quadratic space over $F$ where $V$ is a finite dimensional vector space over $F$ and $q$ is a quadratic form on $V$.  
When it will not cause confusion, we will omit $V$ and simply refer to the quadratic form $q$. 
We shall say $q$ is a quadratic form over $F$, or more succinctly, $q$ is a quadratic $F$-form.   
If $E/F$ is a field extension then $(V,q)$ determines a quadratic space $(V_E, q_E)$ over $E$ by extending scalars (i.e., where $V_E:= V\otimes_FE$ and $q_E$ is the extension of $q$ to $V_E$).  
When it will not cause confusion, we will sometimes denote the extended form by the symbol $q$ as well.  
Every quadratic space $(V,q)$ determines an algebraic $F$-group, $\mathbf{SO}(V,q)$ whose $E$ points are given by
$$\mathbf{SO}(V,q)(E)=\{T\in \mathbf{SL}(V_E)\ | \ q_E(Tv)=q_E(v) \mbox{ for all } v\in V_E\}.$$ 
\begin{Def}Let $(V_1,q_1)$ and $(V_2,q_2)$ be quadratic spaces over $F$.  
Then $q_1$ and $q_2$ are 
\begin{enumerate}
\item \textbf{isometric} if there some $F$-linear isomorphism $T:V_1\to V_2$ such that $q_2(Tv)=q_1(v)$ for all $v\in V_1$.
\item \textbf{similar} if there exists some $a\in F^\times$ such that $q_1$ and $aq_2$ are isometric. 
\item  \textbf{isogroupic} if $\mathbf{SO}(q_1)$ and $\mathbf{SO}(q_2)$ are isomorphic as algebraic $F$-groups.
\end{enumerate}
\end{Def}

The first two definitions are standard, while the third we introduce in the paper.  
It is not hard to see that each of these determine an equivalence relation among quadratic $F$-forms.  
Furthermore, the following lemma begins to shows how they are related.

\begin{lem}\label{isosimrep}${}$
\begin{enumerate}
\item Isometric forms are isogroupic.  
\item Similar forms are isogroupic.
\end{enumerate}
\end{lem}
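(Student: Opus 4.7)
The plan is to give a short direct proof, where part (2) reduces to part (1) via the simple observation that scaling a quadratic form by a nonzero scalar does not change its orthogonal group.

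For part (1), I would start from an $F$-linear isomorphism $T\colon V_1\to V_2$ realizing the isometry, i.e.\ $q_2(Tv)=q_1(v)$ for all $v\in V_1$. I would then define the conjugation map $\varphi_T\colon \mathbf{SO}(q_1)\to \mathbf{SO}(q_2)$ by $g\mapsto TgT^{-1}$, and check that for any field extension $E/F$ and any $w\in V_{2,E}$, writing $v=T^{-1}w$, one has
\[
q_{2,E}(TgT^{-1}w)=q_{2,E}(Tgv)=q_{1,E}(gv)=q_{1,E}(v)=q_{2,E}(Tv)=q_{2,E}(w),
\]
so $\varphi_T(g)\in \mathbf{SO}(q_2)(E)$. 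Since $T$ is $F$-linear, $\varphi_T$ is defined over $F$, and $\varphi_{T^{-1}}$ is a two-sided inverse, so $\varphi_T$ is an $F$-isomorphism of algebraic groups.

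For part (2), I would first observe that for any $a\in F^\times$ and any quadratic space $(V,q)$, the condition $q(gv)=q(v)$ is equivalent to $aq(gv)=aq(v)$, so $\mathbf{SO}(q)=\mathbf{SO}(aq)$ as algebraic $F$-subgroups of $\mathbf{SL}(V)$. Hence if $q_1$ is isometric to $aq_2$ for some $a\in F^\times$, part (1) applied to the pair $(q_1,aq_2)$ yields $\mathbf{SO}(q_1)\cong \mathbf{SO}(aq_2)=\mathbf{SO}(q_2)$, giving the claim.

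There is no real obstacle here; the only thing to be careful about is that the isomorphism produced in part (1) is a genuine morphism of algebraic groups (not merely of abstract groups of $F$-points), which is automatic because conjugation by a fixed $F$-linear map $T$ is an $F$-morphism of $\mathbf{GL}(V_1)$ to $\mathbf{GL}(V_2)$ and restricts appropriately.
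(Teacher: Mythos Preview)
Your proposal is correct and follows essentially the same approach as the paper: both prove (1) by conjugating by the isometry $T$ and verifying directly that $TgT^{-1}$ preserves $q_2$, and both prove (2) by observing that $\mathbf{SO}(q)=\mathbf{SO}(aq)$ as subgroups of $\mathbf{SL}(V)$ and then invoking (1). Your version is slightly more careful in noting that the conjugation map is an $F$-morphism of algebraic groups (not just of abstract groups), but otherwise the arguments are the same.
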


\begin{proof}${}$

\noindent \textit{(1)}  
Let $(V_1,q_1)$ and $(V_2,q_2)$ be isometric forms.  
By assumption there exists an $F$-linear isomorphism $T:V_1\to V_2$ preserving the forms.
Then $T$ induces an $F$-isomorphism $T_*:\mathbf{SL}(V_1)\to \mathbf{SL}(V_2)$ via $g\mapsto TgT^{-1}$.  
Upon restricting to $\mathbf{SO}(V_1,q_1)$, for any $v\in V_2$, we have
$$q_2(T_*(g)v)=q_2\big((TgT^{-1})v\big)=q_2\big(T(g(T^{-1}v))\big)=q_1(g(T^{-1}v))=q_1(T^{-1}v)=q_2(v).$$
Hence $T_*(\mathbf{SO}(V_1,q_1))\subset \mathbf{SO}(V_2, q_2)$ and by symmetry of argument it follows they are $F$-isomorphic.\\

\noindent  \textit{(2)}  
Let $(V_1,q_1)$ and $(V_2,q_2)$ be similar forms.   
By assumption there exists $a\in F^\times$ such that $aq_1$ and $q_2$ are isometric.
By part (1), it suffices to show that $aq_1$ and $q_1$ are isogroupic.
Pick $g\in \mathbf{SO}(V_1, q_1)$ and $v\in V_1$, then
$$(aq_1)(gv)=a(q_1(gv))=a(q_1(v))=(aq_1)(v).$$
Therefore $g\in \mathbf{SO}(V_1, aq_1)$, and by symmetry of argument, $\mathbf{SO}(V_1,q_1)=\mathbf{SO}(V_1, aq_1)$.  
The result follows.
\end{proof}

In general there are many isometry classes in a given isogroupy class.  
If $\mathbf{G}:=\mathbf{SO}(q)$, then any $q'$ in the isogroupy class of $q$ shall be said to \textbf{represent} $\mathbf{G}$.  

A quadratic form $r$ is a \textbf{subform} of a quadratic form $q$  if there is some third form $t$ such that $r\oplus t$ is isometric to $q$.   
We say a symmetric bilinear form $b$ is \textbf{nondegenerate} when $b(v,w)=0$ for all $w\in V$ implies that $v=0$.  
A quadratic form corresponding to a nondegenerate symmetric bilinear form is said to be \textbf{regular}.  
In this paper, all quadratic forms will be assumed to be regular.  
The \textbf{dimension} of $q$, denoted $\dim q$, is the dimension of its associated vector space.  
When possible, we shall reserve the symbol $m$ to denote the dimension of $q$.   
Upon choosing a basis, every quadratic form may be represented by an  $m\times m$ matrix.  
The \textbf{determinant} of $q$, denoted $\det q$, is the determinant of some $Q\in \mathbf{GL}_m(F)$ representing $q$.  
Note however that since this should be be independent of the choice of basis and $\det ({}^tTQT)=\det Q (\det T)^2$, the determinant is only well defined up to square class of $F$, and hence we view $\det q\in F^\times/(F^\times)^2$.  
Though the determinant is a square class, we will often omit the $(F^\times)^2$ and write $\det q =a$ as opposed to $\det q =a (F^\times)^2$, where $a\in F^\times$.  
A common renormalization of the determinant is the \textbf{discriminant},  denoted $\mathrm{disc}(q)$, where $\mathrm{disc}(q)=(-1)^{\dim(q)(\dim(q) -1)/2}\det(q)$.  
It contains the same information as the determinant if one knows the dimension, but often results in simpler expressions.

For $a,b\in F^\times$, 
the \textbf{Hilbert symbol} $\left(\frac{a,b}{F}\right)=(a,b)_F$ denotes the isomorphism class of the quaternion algebra generated by symbols $i$ and $j$ where $i^2=a, j^2=b$, and $ij=-ji$.  
When the field $F$ is understood, we simply write $(a,b)$.  
The Hilbert symbol satisfies the following four properties that will be used frequently in this paper:
\begin{enumerate}[\qquad(H1)]
\item Defined up to square class: $(a,bc^2)=(a,b)$,
\item Symmetry: $(a,b)=(b,a)$,
\item Multiplicativity: $(a_1a_2,b)=(a_1,b)(a_2,b)$,
\item Nondegeneracy: For $a\in F^\times$ not a square, there exists a $b\in F^\times$ such that $(a,b)\ne 1$.\end{enumerate}

Given a diagonal representation $\langle a_1, a_2, \ldots, a_m\rangle$ of $q$ \cite[I.2.4]{Lam},
the \textbf{Hasse invariant} $c(q)$ is defined to be
\begin{equation}
c(q):=\begin{cases} \prod_{i< j}(a_i,a_j) & \mbox{if $m\ge2$, and}\\  1 &\mbox{if $m=1$.}\end{cases}
\end{equation}
While this definition is common \cite{Cassels,Lam}, some authors use different normalizations of this invariant.
In \cite{OM}, the Hasse invariant is defined to be
$c_{OM}(q):=\prod_{i\le j}(a_i,a_j)$, and in 
\cite{Belolipetsky}, the Hasse invariant $\epsilon_{HW}$ is normalized so that the split quadratic form $h$ always has $\epsilon_{HW}(h)=1$.
It follows that 
\begin{equation}
c_{OM}(q)=c(q)(-1,\det q),
\end{equation}
and, by our computations in Section \ref{sectiontits}, which are summarized in Table \ref{tablequadratic}, for $m\ge4$,
\begin{equation}
\epsilon_{HW}(q)=\begin{cases}
c(q)(-1,-1)^{\frac{n(n-1)}{2}} & \mbox{ if } m=2n, \\
c(q)(-1,-1)^{\frac{n(n-3)}{2}}(-1,\det q)^n & \mbox{ if } m=2n+1.
\end{cases}
\end{equation}
The Hasse invariant satisfies the following useful product formula:
\begin{align}\label{productformula}
c(q_1\oplus q_2)= c(q_1)c(q_2)(\det q_1, \det q_2).
\end{align}

While the Hasse invariant is a well defined invariant of the isometry class of $q$  \cite[V.3.8]{Lam}, 
it is not an invariant of the similarity class of $q$.
The relationship between $c(q)$ and $c(\lambda q)$, $\lambda\in F^\times$, is given by the following lemma.  

\begin{lem}\label{simc}
Let $F$ be a field of characteristic not 2, let $q$ be a quadratic form over $F$ of dimension $m$, and let $\lambda\in F^\times$.  Then $$c(\lambda q)=\left(\lambda,(-1)^{\frac{m(m-1)}{2}}(\det q)^{m-1}\right)c(q).$$
In particular this reduces to 
\begin{align}
c(\lambda q)
&=\begin{cases}(\lambda,\mathrm{disc}(q))\ c(q)& \mbox{when $m$ is even,}\\ \left(\lambda, (-1)^{\frac{m-1}{2}}\right)\ c(q)& \mbox{when $m$ is odd.}\end{cases}
\end{align}
\end{lem}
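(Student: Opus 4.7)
The plan is to expand $c(\lambda q)$ directly from the definition in a chosen diagonalization and then use the basic properties (H1)--(H4) of the Hilbert symbol to collect terms.

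First I would fix a diagonalization $q=\langle a_1,\dots,a_m\rangle$, so that $\lambda q=\langle \lambda a_1,\dots,\lambda a_m\rangle$ and
\begin{equation*}
c(\lambda q)=\prod_{i<j}(\lambda a_i,\lambda a_j).
\end{equation*}
Applying multiplicativity (H3) in each slot expands each factor as $(\lambda,\lambda)(\lambda,a_j)(a_i,\lambda)(a_i,a_j)$, so the product breaks into four pieces: a power of $(\lambda,\lambda)$, a cross-term $\prod_{i<j}(\lambda,a_j)(a_i,\lambda)$, and the leftover $\prod_{i<j}(a_i,a_j)=c(q)$.

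Next I would do the bookkeeping on the cross-term. Using symmetry (H2) to fold $(a_i,\lambda)$ into $(\lambda,a_i)$, and counting that index $i$ appears in the first slot $m-i$ times and in the second slot $i-1$ times, the total exponent of $(\lambda,a_i)$ is $(m-i)+(i-1)=m-1$. Hence
\begin{equation*}
\prod_{i<j}(\lambda,a_j)(a_i,\lambda)=\prod_{i=1}^m(\lambda,a_i)^{m-1}=\left(\lambda,\Bigl(\prod_i a_i\Bigr)^{m-1}\right)=\bigl(\lambda,(\det q)^{m-1}\bigr),
\end{equation*}
the last step using (H1) to reduce modulo squares. Meanwhile $(\lambda,\lambda)=(\lambda,-1)$ (from the elementary identity $(\lambda,-\lambda)=1$ combined with multiplicativity), and there are $\binom{m}{2}=m(m-1)/2$ copies of this factor, giving $\bigl(\lambda,(-1)^{m(m-1)/2}\bigr)$. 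Multiplying the pieces together and using multiplicativity in the second argument yields the claimed formula.

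Finally, the two case specializations drop out by parity. When $m$ is even, $m-1$ is odd, so $(\det q)^{m-1}\equiv \det q$ modulo squares, and the prefactor $(-1)^{m(m-1)/2}\det q$ is exactly $\mathrm{disc}(q)$. When $m$ is odd, $m-1$ is even, so $(\det q)^{m-1}$ is a square and disappears by (H1), while $m(m-1)/2\equiv (m-1)/2\pmod 2$ since $m$ is odd, leaving $\bigl(\lambda,(-1)^{(m-1)/2}\bigr)$. There is no real obstacle here: the only point to watch carefully is the combinatorial count of $i-1$ and $m-i$ that produces the uniform exponent $m-1$, and the verification $(\lambda,\lambda)=(\lambda,-1)$.
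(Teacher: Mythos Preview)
Your proof is correct and follows essentially the same route as the paper: a direct expansion of $\prod_{i<j}(\lambda a_i,\lambda a_j)$ via bilinearity, the identity $(\lambda,\lambda)=(\lambda,-1)$, and the count that each $a_i$ contributes with exponent $m-1$. The paper's version is terser but identical in substance, and your added justification of the parity reductions is accurate.
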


\begin{proof}
A direct computation gives:
\begin{align*}
c(\lambda q)	&=\prod_{i<j}(\lambda a_i, \lambda a_j)\\
	&=\prod_{i<j}(\lambda, \lambda)(\lambda,a_i)(\lambda,a_j)(a_i,a_j)\\
	&=(\lambda,-1)^{\frac{m(m-1)}{2}}(\lambda,\det q)^{m-1}c(q)\\
	&=\left(\lambda,(-1)^{\frac{m(m-1)}{2}}(\det q)^{m-1}\right)c(q).
\end{align*}
The reduction when $m$ is even and odd immediately follows.
\end{proof}

The extent to which the Hasse invariant varies within an isogroupy class will be explored in Section \ref{sectiontits}.
In general the Hasse invariant is difficult to compute, however, when $F$ is a nonarchimedean local field or $\R$, then $c(q)$ can only take values $\pm 1$, and over $\C$,  $c(q)$ is identically $1$.  

Every isometry class of quadratic forms over $\R$ can be diagonally represented 
with the first $m_+$ terms positive and the remaining $m_-:=m-m_+$ terms negative.  
The \textbf{signature} of $q$ is the pair $\mathrm{sgn}(q):=(m_+, m_-)$.    
Some authors define to the signature of $q$ to be the number $s=m_+-m_-$.  
Observe that the two pairs $(m,s)$ and $(m_+, m_-)$ contain equivalent information.
The signature is an invariant of the isometry class of $q$, and the unordered pair $\{m_+, m_-\}$ is an invariant of the similarity class of $q$.

These invariants determine the isometry classes of quadratic forms over local and global fields.
For the reader's convenience, we state the uniqueness and existence theorems for quadratic forms over local and global fields.  
These will be essential in our analysis in later sections.  

\begin{thm}[Local Uniqueness]\label{localuniqueness}Let $F$ be $\C$, $\R$, or a finite extension of $\Q_p$ that we denote $L$,  and  let $q$ and $q'$ be quadratic $F$-forms.  Then $q$ and $q'$ are isometric if and only if
\begin{enumerate}
\item When $F=\C$,  $\dim q=\dim q'$.
\item When $F=\R$,  $\dim q=\dim q'$ and $\mathrm{sgn}(q)=\mathrm{sgn}(q')$.
\item When $F=L$,  $\dim q=\dim q'$, $\det(q)=\det(q')$, and $c(q)=c(q')$.
\end{enumerate}
\end{thm}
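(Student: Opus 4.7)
The plan is to handle each of the three fields separately, with the real case following from Sylvester's law of inertia and the main work lying in the local (nonarchimedean) case. For $F = \C$, every nonzero element is a square, so every regular quadratic form diagonalizes to $\langle 1, 1, \ldots, 1\rangle$, and dimension is manifestly a complete isometry invariant. For $F = \R$, I would invoke Sylvester: any diagonal representation $\langle a_1, \ldots, a_m\rangle$ of $q$ can be normalized to have each $a_i \in \{1, -1\}$, and the number of positive entries equals the dimension of any maximal positive-definite subspace, which is intrinsic to $(V, q)$; symmetrically for the negative count. Thus $(m_+, m_-)$ is an invariant, and it obviously determines the isometry class.

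For $F = L$, a finite extension of $\Q_p$, I would argue by induction on $m := \dim q = \dim q'$. The cases $m = 1$ and $m = 2$ are handled directly: in dimension $1$ the form is just $\langle \det q\rangle$; in dimension $2$, $q \cong \langle a, b\rangle$ is pinned down by $\det q = ab$ modulo squares together with $c(q) = (a,b)_L$, using the fact that $\langle a, b\rangle$ represents an element $c \in L^\times$ if and only if the quaternion algebra $(a,b)_L$ admits $c$ as a reduced norm, and then by Witt cancellation $\langle a, b\rangle$ is determined by its value set and determinant. For the inductive step, suppose $q$ and $q'$ share dimension, determinant, and Hasse invariant, and suppose they represent a common value $a \in L^\times$. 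Writing $q \cong \langle a\rangle \oplus q_1$ and $q' \cong \langle a\rangle \oplus q_1'$, the product formula
\[
c(\langle a \rangle \oplus q_1) \;=\; c(q_1)\,(a, \det q_1)
\]
together with the determinant relation $\det q_1 = a \cdot \det q \pmod{(L^\times)^2}$ forces $\det q_1 = \det q_1'$ and $c(q_1) = c(q_1')$. Since $\dim q_1 = m - 1$, the induction hypothesis yields $q_1 \cong q_1'$, whence $q \cong q'$ by Witt cancellation.

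The main obstacle is therefore to show that $q$ and $q'$ represent a common element when they share invariants. For $m \geq 5$, I would use that every quadratic form over a nonarchimedean local field of dimension at least $5$ is isotropic, and hence (after splitting off a hyperbolic plane) universal, so that any $a \in L^\times$ works. This isotropy fact follows from the finiteness of $L^\times/(L^\times)^2$ and a counting argument using the nondegeneracy of the Hilbert symbol. The cases $m = 3$ and $m = 4$ are the delicate ones: for $m = 3$, the set of represented values is a nonempty union of square classes whose description via the Hasse invariant shows that two such forms with matching invariants must share a represented element; for $m = 4$, one separates the isotropic case (where the form is universal) from the anisotropic case (where $q$ is, up to similarity, the norm form of the unique quaternion division algebra over $L$, which is determined by its Hasse invariant alone). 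In all cases the invariants dictate the same value set for $q$ and $q'$, completing the inductive step and the theorem.
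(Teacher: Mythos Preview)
The paper does not prove this theorem; it is stated as a classical result with references to \cite{Sy} for the real case and \cite[VI.63:23]{OM} for the nonarchimedean case. Your sketch follows the standard textbook argument (as in O'Meara or Serre) and is essentially correct: induction on dimension, splitting off a commonly represented value, and using universality in dimension $\geq 5$.

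One imprecision worth flagging: in your $m=2$ base case, the claim that $\langle a,b\rangle$ represents $c$ iff $c$ is a reduced norm from $(a,b)_L$ is not right as stated --- the reduced norm form of the quaternion algebra $(a,b)$ is the four-dimensional $\langle 1,-a,-b,ab\rangle$, not $\langle a,b\rangle$. What you actually need is the simpler observation that $\langle a,b\rangle$ represents $c$ iff the ternary form $\langle a,b,-c\rangle$ is isotropic, and over a local field isotropy of a ternary form is decided by its determinant and Hasse invariant (your Lemma~\ref{34iso}(1) in the paper, or the equivalent Hilbert-symbol criterion). Since these depend only on the invariants of $\langle a,b\rangle$ and the square class of $c$, two binary forms with matching $(\det,c)$ have identical value sets, hence share a represented element, and the induction proceeds. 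The same ``value set depends only on invariants'' principle is really what drives your $m=3,4$ cases too, and stating it that way would tighten those paragraphs.
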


\begin{thm}[Local Existence]\label{localexistence}${}$
\begin{enumerate}
\item For each $m\in \Z_{\ge 1}$, there exists a quadratic $\C$-form $q$ such that 
$$\dim q = m.$$
\item For each pair $(m_+,m_-)\in \Z_{\ge 0}\times \Z_{\ge 0}$, there exists a quadratic $\R$-form $q$ such that 
$$\dim q = m:=m_++m_-\quad \mbox{ and }\quad \mathrm{sgn}(q)=(m_+,m_-).$$
\item For each triple $(m,d,c)\in \Z_{\ge 1} \times L^\times/(L^\times)^2\times \{\pm1\}$, there exists a quadratic $L$-form $q$ such that 
$$\dim q = m, \quad\det q= d \quad \mbox{ and } \quad c(q)=c,$$ 
\phantomsection
\label{exception}
$\mathbf{(*)}$ with the exception that $c=1$ when either $m=1$ or  $m=2$ and $d=-1$.
\end{enumerate}
\end{thm}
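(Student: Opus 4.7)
Parts (1) and (2) are immediate: the diagonal form $\langle 1, \ldots, 1\rangle$ of length $m$ realizes any desired complex dimension, and $\langle 1, \ldots, 1, -1, \ldots, -1\rangle$ with $m_+$ positive and $m_-$ negative entries realizes any real signature. The substance of the theorem lies in part (3), existence over a nonarchimedean local field $L$. My plan is to prove (3) by induction on $m$, concentrating the real work at the small dimensions.

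The inductive step is cheap. Appending $\langle 1\rangle$ preserves both determinant and Hasse invariant, since the product formula \ref{productformula} gives $c(q \oplus \langle 1\rangle) = c(q)\cdot 1 \cdot (\det q, 1) = c(q)$, while $\det$ is unchanged. Hence it suffices to realize every admissible triple $(m, d, c)$ for $m \in \{1, 2, 3\}$ and then stack copies of $\langle 1\rangle$; no exception propagates upward since the exception in $(\ast)$ occurs only in dimensions $1$ and $2$.

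The base cases are driven by the nondegeneracy (H4) of the Hilbert symbol on $L^\times/(L^\times)^2$. In dimension $1$, $\langle d\rangle$ has Hasse invariant $1$ by definition, matching the first half of the exception. In dimension $2$, I take $\langle a, ad\rangle$, whose determinant is $d$ modulo squares and whose Hasse invariant, using $(a,a) = (a,-1)$ (a consequence of $(a,-a)=1$), simplifies to $(a,-d)$; by (H4) this realizes both $\pm 1$ precisely when $-d$ is not a square, i.e., when $d \not\equiv -1$, matching the second half of $(\ast)$. In dimension $3$ with $-d$ not a square, the form $\langle 1, b, bd\rangle$ again reduces Hasse computation to $(b,-d)$. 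The only genuinely new case is dimension $3$ with $d \equiv -1$ modulo squares, where I take $\langle a, b, -ab\rangle$: this has determinant $-a^2b^2 \equiv -1$, and after the cross terms $(a,-ab)$ and $(b,-ab)$ each collapse to $(a,b)$ via the identity $(a,a)=(a,-1)$, the Hasse invariant becomes $(a,b)^3 = (a,b)$, which I realize as either $\pm 1$ by another application of nondegeneracy.

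The main obstacle is exactly this $m = 3$, $d \equiv -1$ subcase. The one-parameter strategy from dimensions $2$ and $3$ (when $-d$ is not a square) degenerates here because $(b,-d) = (b,1) = 1$ is forced, so $c(q) = -1$ cannot arise; a genuinely two-parameter ternary form is needed, and the identity $(a,-a) = 1$ must be exploited to collapse two of the three cross terms so that a nontrivial Hasse invariant survives. This step fundamentally relies on $L$ being nonarchimedean, so that the Hilbert symbol is nontrivial on $L^\times$.
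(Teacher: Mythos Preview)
Your argument is correct. The paper does not actually prove this theorem: it records it as a classical fact and sends the reader to \cite[VI.63:23]{OM} for the nonarchimedean case and to \cite{Sy} for the real case. So there is nothing to compare against beyond noting that your proof is a clean, self-contained substitute for that citation.

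Two minor comments on exposition. First, your closing remark that the argument ``fundamentally relies on $L$ being nonarchimedean'' slightly overstates the point: what you actually use is nondegeneracy of the Hilbert symbol (H4), which holds over $\R$ as well; the relevant contrast is with $\C$, where every symbol is trivial. Second, in the $m=3$, $d\equiv -1$ case you might say explicitly that the existence of $a,b$ with $(a,b)=-1$ is equivalent to the existence of the (unique) quaternion division algebra over $L$, which is guaranteed for any nonarchimedean local field; this makes the invocation of (H4) concrete. Neither point affects correctness.
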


While the  \hyperref[exception]{exceptional restrictions $(*)$} on the Hasse invariant in dimensions $m=1$ and $m=2$ may seem inconsequential, they will play an integral role in the construction of subforms later in the paper.  
For more on the above results over $\R$ and $L$, we refer the reader to \cite{Sy} and \cite[VI.63:23]{OM}, respectively.

\begin{thm}[Local-to-Global Uniqueness]\label{localglobaluniqueness} \cite[VI.66:4]{OM}
Let $k$ be a number field and 
 $q$ and $q'$ be quadratic $k$-forms.   
 Then $q\cong q'$ if and only if $q\otimes k_v\cong q'\otimes k_v$ for all $v\in V_k$.
\end{thm}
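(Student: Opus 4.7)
The forward direction is immediate, so I would focus entirely on the converse. My plan is to reduce the problem, via induction on $m = \dim q = \dim q'$, to the classical Hasse--Minkowski isotropy principle, which I would take as the deep input (this is where the global arithmetic is really used).

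For the base case $m=1$, write $q = \langle a\rangle$ and $q' = \langle b\rangle$. Local isometry at every place says $a/b \in (k_v^\times)^2$ for every $v \in V_k$. I would invoke the classical fact that an element of $k^\times$ which is a square in every completion is itself a square in $k$ (for instance via Hasse--Minkowski applied to the binary form $\langle 1, -a/b\rangle$, or equivalently the Grunwald--Wang phenomenon for the exponent 2, which has no exceptions). This gives $a/b \in (k^\times)^2$ and hence $q \cong q'$ over $k$.

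For the inductive step, I would argue that the two forms share a common nonzero value globally. Pick any $d \in k^\times$ represented by $q$ over $k$; this is possible since $q$ is regular and $m \ge 2$. Then $d$ is represented by $q \otimes k_v$ for every $v$, and by the local hypothesis also by $q' \otimes k_v$ for every $v$. Equivalently, the form $q' \perp \langle -d \rangle$ is isotropic over every completion. The Hasse--Minkowski principle then gives that $q' \perp \langle -d\rangle$ is isotropic over $k$, i.e.\ $q'$ represents $d$ globally. Write $q \cong \langle d\rangle \perp r$ and $q' \cong \langle d\rangle \perp r'$ with $\dim r = \dim r' = m-1$. Apply Witt cancellation over each $k_v$ to the local isometry $q \otimes k_v \cong q' \otimes k_v$ to conclude $r \otimes k_v \cong r' \otimes k_v$ for all $v$. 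By the inductive hypothesis, $r \cong r'$ over $k$, whence $q \cong q'$.

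The genuine obstacle is the Hasse--Minkowski isotropy theorem itself: the entire argument is a bookkeeping exercise around that one nontrivial input. Everything else (Witt cancellation, diagonalization, the squares-locally-imply-squares-globally statement in the base case) is formal or is a degenerate special case of the same principle. Since this is precisely the content of \cite[VI.66:4]{OM}, in the paper I would simply cite O'Meara rather than reproduce the Hasse--Minkowski proof.
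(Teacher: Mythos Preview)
Your argument is correct and is in fact the standard reduction of the local-to-global uniqueness statement to the Hasse--Minkowski isotropy principle. The paper, however, gives no proof of this theorem at all: it is stated with a citation to \cite[VI.66:4]{OM} and used as a black box. You correctly anticipated this in your final paragraph, so there is nothing to compare---the paper's ``proof'' is exactly the citation you suggested.
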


\begin{thm}[Local-to-Global Existence]\label{localglobal} \cite[VII.72:1]{OM}  Let $k$ be a number field and let
\begin{itemize}
\item $m\in \Z_{\ge 1}$,
\item $d\in k^\times/(k^\times)^2$, and 
\item $S\subset V_k$ be a finite subset of even cardinality.
\end{itemize}
For each family $\{q_v\}_{v\in V_k}$ where $q_v$ is a quadratic form over $k_v$ satisfying 
\begin{itemize}
\item $\dim q_v=m$,
\item $\det q_v= d$, and
\item $c_v(q_v)=-1$ if and only if $v\in S$,  
\end{itemize}there exists a quadratic form $q$ over $k$ such that $q\otimes k_v=q_v$ for all $v\in V_k$.
\end{thm}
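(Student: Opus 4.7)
The plan is to induct on the dimension $m$, peeling off a one-dimensional subform $\langle a\rangle$ at each step, with $a\in k^\times$ chosen via weak approximation to be represented by every $q_v$.

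For $m=1$ the form $\langle d\rangle$ works (Hasse invariant is trivially $1$, forcing $S=\emptyset$). For $m=2$, the construction amounts to finding a quaternion algebra over $k$ with prescribed ramification set $S$: by the fundamental exact sequence of class field theory
\begin{equation*}
0\longrightarrow \Br(k)\longrightarrow \bigoplus_{v\in V_k}\Br(k_v)\longrightarrow \Q/\Z\longrightarrow 0,
\end{equation*}
combined with the identification of $\Br(k_v)[2]\cong \{\pm1\}$ and the fact that quaternion algebras realize all $2$-torsion Brauer classes locally, such an algebra exists precisely when $|S|$ is even. Any diagonalization of its (scaled) norm form then has the prescribed local determinant and Hasse invariants.

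For $m\ge 3$, choose $a\in k^\times$ that is represented by $q_v$ for every $v\in V_k$. This uses that a regular quadratic form of dimension $\ge 3$ over a local field represents a nonempty open subset of $k_v^\times$, and at all but finitely many $v$ any unit in $\mathcal{O}_{k_v}^\times$ is represented; weak approximation on $k^\times$ then patches these local representatives into a single global $a$. The orthogonal complement $q_v'$ is well-defined up to isometry in dimension $m-1$ with $\det q_v'=ad$, and by the product formula (\ref{productformula})
\begin{equation*}
c(q_v')=c(q_v)\cdot (a,ad)_v^{-1}.
\end{equation*}
Hilbert reciprocity $\prod_v(a,ad)_v=1$ together with the evenness of $|S|$ forces $S':=\{v:c(q_v')=-1\}$ to again have even cardinality, so the inductive hypothesis produces a global $q'$ with $q'\otimes k_v=q_v'$ for all $v$. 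Then $q:=\langle a\rangle\oplus q'$ realizes the prescribed local data at every place, and global uniqueness (Theorem \ref{localglobaluniqueness}) guarantees that $q\otimes k_v=q_v$.

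The main obstacle is making the weak-approximation choice of $a$ rigorous: one must produce a single $a\in k^\times$ whose square class is simultaneously represented by each $q_v$ and which avoids the finitely many forbidden local configurations of Theorem \ref{localexistence}(*) arising in the residual forms $q_v'$. The nontrivial constraints are localized at the archimedean places, the dyadic places, and the places where $q_v$ is anisotropic; patching these open conditions together via weak approximation, and verifying the residual parity condition at each stage of the induction, is the heart of the argument carried out in detail in \cite{OM}.
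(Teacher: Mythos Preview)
The paper does not prove this statement; it is quoted from O'Meara \cite[VII.72:1]{OM} and used as a black box throughout Sections~\ref{sectionconstructions}--\ref{sectionmaclachlan}. So there is no in-paper argument to compare against.

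Your sketch is the classical inductive proof, and it is the one O'Meara gives. A few remarks on the details. First, in the inductive step you need not worry about the residual forms $q_v'$ violating the exceptional restrictions of Theorem~\ref{localexistence}($*$): each $q_v'$ is the honest orthogonal complement of $\langle a\rangle$ inside an existing form $q_v$, so it exists automatically. What you do need is that the new determinant $ad$ is a global square class (clear) and that $|S'|$ is even, which you correctly deduce from Hilbert reciprocity. Second, for $m\ge 4$ the choice of $a$ is much easier than you suggest: every regular form of dimension $\ge 4$ over a nonarchimedean local field is universal, so the only genuine constraints on $a$ are the sign conditions at the real places, handled by weak approximation in one line. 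The delicate case is exactly $m=3$, where anisotropic ternary forms over $k_v$ omit one square class; this is where O'Meara's careful bookkeeping (and the need to control $a$ at the finitely many places where $q_v$ is anisotropic, dyadic, or archimedean) actually enters. Third, your treatment of $m=2$ via the Brauer sequence is correct in spirit, but note that what you actually need is the solvability of $(a,-d)_v=\epsilon_v$ for a single global $a$ with the second slot fixed; this is the surjectivity in the norm-residue sequence for $k(\sqrt{-d})/k$, not quite the full Brauer-group statement.

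In short: your outline is correct and matches the cited source; the acknowledged gap at the weak-approximation step is real but is the standard one, and you have located it accurately.
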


Quadratic forms are used to construct irreducible arithmetic lattices of semisimple Lie groups of the form $$G=\prod_{i=1}^r \mathbf{SO}(p_i,m-p_i) \times (\mathbf{SO}_m(\C))^s.$$
In the literature, these lattices have been called: ``standard'' \cite{Lub97}, ``lattice of the simplest type'' \cite{Vin2}, and, ``coming from quadratic forms.'' 
We shall use the terminology ``standard'' when convenient, or otherwise we shall say explicitly ``coming from quadratic forms.''

\begin{con}\label{quadcon}Fix the following notation:
\begin{enumerate}
\item $k$ is a number field with infinite places $V_k^\infty$; 
\item $(V,q)$ is an $m$-dimensional quadratic $k$-space, $m\ge 3$; 
\item $\mathbf{G}:=\mathbf{SO}(V,q)$ is the absolutely almost simple $k$-group defined by $(V,q)$ and $SO(q):=\mathbf{G}(k)$;
\item For each $v\in V_k^\infty$,   $V_{k_{v}}:=V\otimes_kk_{v}$, $q_{v}:=q\otimes k_{v}$, and $\mathbf{G}_{v}$ is the algebraic $k_{v}$-group $\mathbf{SO}(V_{k_{v}},q_{v})$,
\begin{itemize}
\item If $v$ is real, then $\mathbf{G}_{v}(k_v)\cong \mathbf{SO}(m_+^{(v)}, m_-^{(v)})$,  
\item If $v$ is complex, then $\mathbf{G}_{v}(k_v)\cong \mathbf{SO}_m(\C)$,
		\item $r$ is the number of real places where $q$ is isotropic,  
		\item $s$ is the number of complex places, and
		\item $p_i:=m_+^{(v_i)}$ where $\{v_1, \ldots, v_r\}$ is the set  of real places where $q$ is isotropic;
	\end{itemize} 
\item $\mathbf{G}':=R_{k/\Q}\mathbf{G}$  is the semisimple $\Q$-group formed by restriction of scalars.   
Then $\mathbf{G}'(\R)=\prod \mathbf{G}_{v}(k_v)$ is a semisimple Lie group that has compact factors at precisely the real places where $q$ is anisotropic.  
There is an isomorphism $\mathbf{G}(k)\cong \mathbf{G}'(\Q)$ and diagonal embedding $SO(q)\to  \mathbf{G}'(\R)$;
\item $G$ is the projection of $\mathbf{G}'(\R)$ onto its noncompact factors and denote the projection map by $\pi:\mathbf{G}'(\R)\to G$.   
 Observe that $G$ is a semisimple Lie group with no compact factors and is $\R$-simple when $r+s=1$;
\item $L\subset V$ is an $\mathcal{O}_k$-lattice and $G_L:=\{T\in \mathbf{G}(k) \ | \ T(L)\subset L\}$. Then $G_L$ sits as a discrete arithmetic subgroup of the semisimple Lie group $\mathbf{G}'(\R)$;
\item $\Gamma\subset G$ is commensurable up to $G$-automorphism with $\pi(G_L)$.  Then $\Gamma$ is said to be a \textbf{standard arithmetic lattice of $G$}.  Figure \ref{figure:standarddiagram} below  summarizes this construction.
\begin{figure}[htbp]
\begin{tikzpicture}[scale=1.45]
\node (A) at (-1,1) {$SO(q)$};
\node (B) at (4.5,1) {$\left(\displaystyle\prod\limits_{\substack{v \ \mathrm{real} \\ q\ \mathrm{anisotropic}}} \mathbf{SO}(m) \quad \times \displaystyle\prod\limits_{\substack{v \ \mathrm{real} \\ q\ \mathrm{isotropic}}} \mathbf{SO}(m_+^{(v)}, m_-^{(v)}) \quad \times \displaystyle\prod\limits_{v\ \mathrm{complex}}\mathbf{SO}_m(\C)\right)$};
\node (C) at (-2.5,1) {$G_L$};
\node (D) at (4.5,-1.5) {$G=\displaystyle\prod\limits_{i=1}^r \mathbf{SO}(p_i,m-p_i) \times (\mathbf{SO}_m(\C))^s$};
\node (E) at (-2.5,-1.5) {$\Gamma$};
\path[right hook->,font=\scriptsize,>=angle 90]
(A) edge node[above]{diagonal} (B)
(C) edge node[above]{} (A)
(E) edge node[above]{Commensurable (up to $G$-automorphism)} (D)
(E) edge node[below]{with $\pi(G_L)$} (D);
\path[->,font=\scriptsize,>=angle 90]
(C) edge node[below]{lattice} (D)
(B) edge node[right]{$\pi$} (D);
\end{tikzpicture}
\vspace{-1pc}
		\caption{Construction of Standard arithmetic lattices in $G$.}
		\label{figure:standarddiagram}
\end{figure}	
\item  $K\subset G$ is a maximal compact subgroup and $M_{\Gamma}:=\Gamma \backslash G /K$. 
\begin{enumerate}
	\item $M_\Gamma$ is an \textbf{arithmetic locally symmetric space coming from a quadratic form}, 
	(or a  \textbf{standard arithmetic locally symmetric space of type $B_n$ or $D_n$}), 
	and
	\phantomsection
	\label{fieldalgebradef}
	\item $k(M_\Gamma):=k$ is the \textbf{field of definition} of $M_{\Gamma}$.
\end{enumerate} 
%
\end{enumerate}
\end{con}

When $M_\Gamma$ is simple (i.e. $r=1$ and $m\ne 4$),  \cite[Lemma 2.6]{PR} implies that $k(M_\Gamma)$ coincides with the minimal field of definition of $\Gamma$ in the sense of Vinberg \cite{Vin}.
A choice of another $\mathcal{O}_k$-lattice $L'\subset V$ and $\Gamma'$ commensurable up to $G$-automorphism with $\pi(G_{L'})$ will produce a space $M_{\Gamma'}$ that is commensurable with $M_{\Gamma}$. 
Hence choosing the pair $(k,q)$ determines a commensurability class which we will sometimes denote by $M_q$.  
Conversely, if the pairs $(k_1,q_1)$ and $(k_2,q_2)$ yield commensurable spaces, by \cite[Prop. 2.5]{PR},
there is a field isomorphism $\tau:k_1\to k_2$ such that $\mathbf{SO}(q_2)$ and $\mathbf{SO}(q_1\otimes_{\tau}k_2)$ are $k_2$-isomorphic groups.

\begin{Def}
If $(V_1,q_1)$ is a quadratic space over $k_1$ and $\tau:k_1\to k_2$ is a field isomorphism,
then $(V_2,q_2):=(V_1\otimes_\tau k_2,q_1\otimes_\tau k_2)$ is a quadratic space over $k_2$.
We call such base change a \textbf{twist by $\tau$}.
If such a $\tau$ is implicit, we just say $(V_2,q_2)$ is a \textbf{twist} of $(V_1,q_1)$.
\end{Def}

By our remarks above, a twist of $(V,q)$ and $(V,q)$ yield commensurable arithmetic lattices.
In particular, a commensurability class of standard arithmetic lattices of type $B_n$ or $D_n$ is uniquely determined by a twist class of an isogroupy class of quadratic $k$-forms.
Given a number field $k$ with a fixed real (resp. complex) place $v_0$, and a quadric $k$-form $q$ isotropic at a unique real (resp. complex) place, there is always a twist $q'$ of $q$ that is isotropic at $v_0$.

\begin{Def}
We call $(k,q)$ an \textbf{admissible hyperbolic pair} if $M_q$ is a commensurability class of hyperbolic orbifolds, that is to say, 
\begin{enumerate}
\item $k$ is totally real (i.e., no $\mathbf{SO}_{m}(\C)$ terms),
\item $q$ is anisotropic at all but one real place, $v_0$ (i.e., only one $\mathbf{SO}(p_i, m-p_i)$ term),
\item $q\otimes k_{v_0}$ has signature $(m-1,1)$ or $(1,m-1)$.  (i.e., $G\cong\mathbf{SO}(m-1,1)$).
\end{enumerate}
\end{Def}

When $m\ge 3$ is odd, all irreducible arithmetic lattices of $G$ arise from Construction \ref{quadcon} (\cite{T} \cite[\S3]{Lub97}).
When $m>3$, $m\neq 8$, is even, all other are irreducible arithmetic lattices of $G$ come from skew Hermitian forms over quaternion division algebras over number fields (See Construction \ref{hermcon}).  
When $m=8$, in addition to lattices coming from skew Hermitian forms, there are also lattices which come from triality.

\begin{con}\label{subformsubspacedef}
Let $(W,r)$ be a quadratic $k$-subspace of $(V, q)$.  
Then $\mathbf{H}=\mathbf{SO}(W,r)$ is an absolutely almost simple $k$-subgroup of $\mathbf{G}$. 
Let  $\mathbf{H}':=R_{k/\Q}\mathbf{H}$.  
Then $\mathbf{H'}$ is a semisimple $\Q$-subgroup of $\mathbf{G}'$.  
It follows that $L\cap W$ is an $\mathcal{O}_k$-lattice of $W$, hence $G_L\cap \mathbf{H}'(\R)$ is an arithmetic subgroup of $\mathbf{H}'(\R)$.  
Let $H$ be the image of $\mathbf{H}'(\R)$ under the projection map $\pi$ onto the noncompact factors of $\mathbf{G}'(\R)$.  Then $\pi(G_L\cap \mathbf{H}'(\R))$ is an arithmetic subgroup of $H$.  
Note that $H$ may be trivial.  It follows that $N_{\Gamma\cap H}:=(\Gamma\cap H)\backslash H/(H\cap K)$ is commensurable to a totally geodesic subspace of $M_\Gamma$.  
We denote this commensurability class $N_r$.  
In what follows, we shall call such totally geodesic subspaces \textbf{subform subspaces}.
Observe that for a subform subspace $N\subset M$, $k(N)=k(M)$.
Furthermore, if $\dim r\ge 2$ and $r$ is isotropic at a real place of $k$, then $N_r$ is a commensurability class of nontrivial, nonflat, finite volume, locally symmetric spaces of noncompact type.
\end{con}


\section{The Index of Isometry Groups of Quadratic Forms}\label{sectiontits}

Let $\mathbf{G}$ be an absolutely almost simple algebraic $k$-group. 
We will use the conventions of \cite{T} and denote the \textbf{Tits index} of $\mathbf{G}$ by ${}^gX^{(d)}_{n,r}$ where:
\begin{enumerate}
\item $X_n$ is the Killing--Cartan type of $\mathbf{G}\otimes \overline{k}$,
\item $n$ is the $\overline{k}$-rank of $\mathbf{G}$
\item $g$ is the order of the image of the $*$-action map
\item $r$ is the $k$-rank of $\mathbf{G}$, and
\item $d$ is the degree of a division algebra associated with $\mathbf{G}$. 
\end{enumerate}

When $\mathbf{G}=\mathbf{SO}(q)$, $g=1$ or $g=2$ (depending on whether $\mathbf{G}$ in an inner or outer form), and $d=1$.
The Tits index encodes a large amount of information about a semisimple algebraic group's isogeny class  \cite[Theorem 2.7.1]{T}. 
We refer the reader to \cite{T} for more information.
One of the goals of this section is to relate the local index of $\mathbf{SO}(q)$ to the local invariants of $q$ (see Table \ref{tablequadratic}).

For the reader's convenience, we state two basic results relating a quadratic form's invariants to whether or not it is isotropic.

\begin{lem}\cite[Chp. 4 Lem 2.5 \& Lem 2.6]{Cassels}\label{34iso}
Let $L$ be a nonarchimedean local field.
\begin{enumerate}
\item Let $q'$ be a 3-dimensional quadratic form over $L$.  Then $q'$ is isotropic if and only if $c(q')=(-1,-\det q')$.
\item Let $q'$ be a 4-dimensional quadratic form over $L$.  Then $q'$ is anisotropic if and only if $\mathrm{disc}(q')=1$ and $c(q')=-(-1,-1)$.
\end{enumerate}
\end{lem}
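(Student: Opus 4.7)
The plan is to apply the local classification Theorem~\ref{localuniqueness} after identifying, for each statement, a convenient model form whose isotropy is visible by inspection and whose invariants can be computed directly via the Hilbert symbol.

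For part (1), I would first observe that any isotropic $3$-dimensional form $q'$ over $L$ contains a hyperbolic plane, so $q' \cong \langle 1,-1\rangle \oplus \langle t\rangle$ for some $t \in L^\times$; comparing determinants forces $t = -d$, where $d = \det q'$. A short Hilbert-symbol computation yields
\[
c(\langle 1,-1,-d\rangle) = (1,-1)(1,-d)(-1,-d) = (-1,-d),
\]
so isotropy of $q'$ forces $c(q') = (-1,-\det q')$. Conversely, if $c(q') = (-1,-\det q')$, then $q'$ and $\langle 1,-1,-\det q'\rangle$ share dimension, determinant, and Hasse invariant, so by Theorem~\ref{localuniqueness} they are isometric, and $q'$ is isotropic.

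For part (2), I would invoke the fact that $L$ has $u$-invariant $4$: every anisotropic quadratic form over $L$ has dimension at most $4$, and the anisotropic $4$-dimensional form is unique up to isometry. A concrete model is the reduced norm form of the unique quaternion division algebra over $L$; writing this algebra as $D = (a,b)_L$, its reduced norm form is $N = \langle 1, -a, -b, ab\rangle$. Since $D$ is a division algebra, $N$ is anisotropic, and one computes $\det N \equiv a^2 b^2 \equiv 1 \pmod{(L^\times)^2}$. Using bimultiplicativity together with $(1,x) = 1$ and $(x,-x) = 1$, the Hasse invariant collapses to
\[
c(N) = (-a,-b)(-a,b)(a,-b) = (-1,-1)(a,b) = -(-1,-1),
\]
where the last equality uses that $(a,b) = -1$ precisely when $(a,b)_L$ is a division algebra.

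With this model identified, both directions of (2) follow from Theorem~\ref{localuniqueness}: if $q'$ is anisotropic of dimension $4$, uniqueness of the anisotropic $4$-form forces $q' \cong N$, giving $\mathrm{disc}(q') = \det q' = 1$ and $c(q') = -(-1,-1)$; conversely, any $4$-dimensional $q'$ with these invariants is isometric to $N$, hence anisotropic. The principal obstacle is the Hilbert-symbol bookkeeping that reduces $c(N)$ to the stated value; the rest is a routine application of the local classification together with uniqueness of the anisotropic $4$-dimensional form over $L$.
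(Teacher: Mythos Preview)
Your argument is correct. For part~(1) the reduction to $\langle 1,-1,-d\rangle$ via the hyperbolic plane and the Hasse computation are fine; for part~(2) your Hilbert-symbol simplification $c(N)=(-1,-1)(a,b)$ is right, and the appeal to uniqueness of the anisotropic four-dimensional form over a nonarchimedean local field closes the argument.

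Regarding comparison with the paper: the paper does not supply its own proof of this lemma at all. It simply cites Cassels \cite[Chp.~4, Lem.~2.5 \& 2.6]{Cassels} and remarks that although Cassels works over $\Q$, the arguments generalize to arbitrary number fields. Your write-up is therefore a self-contained substitute for that citation. The one ingredient you invoke that goes slightly beyond what the paper records is the uniqueness (up to isometry, not just similarity) of the anisotropic four-dimensional form over $L$; this is standard, but if you want the proof to rely only on statements already in the paper, you could instead argue the forward direction of~(2) as follows: if $q'$ is anisotropic then $q'\oplus\langle -\det q'\rangle$ is five-dimensional and hence isotropic, so $q'$ represents $\det q'$ and one may write $q'\cong\langle\det q'\rangle\oplus q''$ with $q''$ anisotropic three-dimensional of determinant~$1$; then part~(1) forces $c(q'')=-(-1,-1)$, and the product formula~\eqref{productformula} together with $\det q''=1$ gives $c(q')=c(q'')(\det q',1)=-(-1,-1)$ and $\det q'\cdot 1=\det q'$, while a second application of this representation argument (or the observation that $q'\cong\lambda q'$ since $q'$ is universal) pins down $\det q'=1$.
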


Though the proofs in \cite{Cassels} are explicitly written with $k=\Q$, they are generalizable to an arbitrary number field.  

\begin{prop}
Let $k$ be a number field, $v\in V_k$ be a finite place, and $q$ be a $(2n+1)$-dimensional quadratic form over $k_v$.  
Then the local index of the $k_v$-group $\mathbf{SO}(q)$ is $B_{n,n}$ if and only if \begin{align}\label{formula1}c(q)=(-1,-1)^{\frac{n(n-3)}{2}}(-1, \det q )^n.\end{align}
\end{prop}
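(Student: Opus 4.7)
The idea is to identify the split condition on $\mathbf{SO}(q)$ with an explicit diagonal model for $q$ and then invoke local uniqueness. Since $\dim q = 2n+1$, the $k_v$-group $\mathbf{SO}(q)$ has absolute type $B_n$, its $k_v$-rank equals the Witt index of $q$, and so the local Tits index equals $B_{n,n}$ exactly when $\mathbf{SO}(q)$ is $k_v$-split, i.e., exactly when $q$ attains the maximal Witt index $n$. Over the nonarchimedean local field $k_v$ every anisotropic quadratic form has dimension at most $4$; hence by Witt decomposition $q$ has Witt index $n$ if and only if its anisotropic kernel is one-dimensional, i.e., $q\cong n\langle 1,-1\rangle\oplus\langle d\rangle$ for some $d\in k_v^\times$. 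Comparing determinants forces $d\equiv(-1)^n\det q\pmod{(k_v^\times)^2}$, so $d$ is pinned down by $q$.

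Since $q$ and the model $n\langle 1,-1\rangle\oplus\langle d\rangle$ automatically agree in dimension and determinant, Theorem \ref{localuniqueness} reduces the isometry to matching Hasse invariants. A brief induction using the product formula \eqref{productformula} and the identity $c(\langle 1,-1\rangle)=(1,-1)=1$ gives $c(n\langle 1,-1\rangle)=(-1,-1)^{n(n-1)/2}$, and one further application of \eqref{productformula} yields
$$c(n\langle 1,-1\rangle\oplus\langle d\rangle)=(-1,-1)^{n(n-1)/2}\bigl((-1)^n,d\bigr)=(-1,-1)^{n(n-1)/2}(-1,d)^n.$$

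Substituting $d=(-1)^n\det q$ and using multiplicativity of the Hilbert symbol rewrites $(-1,d)^n$ as $(-1,-1)^{n^2}(-1,\det q)^n$; since $n(n-1)/2+n^2-n(n-3)/2=n(n+1)$ is even, the resulting expression collapses to $(-1,-1)^{n(n-3)/2}(-1,\det q)^n$. Thus $q$ is isometric to the split model, and $\mathbf{SO}(q)$ therefore has local Tits index $B_{n,n}$, precisely when $c(q)$ equals this expression. The main obstacle is not conceptual but the careful bookkeeping of Hilbert symbol exponents when translating between the free parameter $d$ of the anisotropic summand and the intrinsic invariant $\det q$.
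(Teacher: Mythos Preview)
Your argument is correct, and it differs from the paper's in a small but notable way. The paper decomposes $q$ as $\langle 1,-1\rangle^{n-1}\oplus q'$ with $q'$ three-dimensional, then invokes Lemma~\ref{34iso}(1) (the isotropy criterion $c(q')=(-1,-\det q')$ for ternary forms) to characterize when $q'$ is isotropic, and finally computes $c(q)$ from $c(q')$ via the product formula. You instead push the Witt decomposition one step further to $\langle 1,-1\rangle^{n}\oplus\langle d\rangle$ and compute $c$ of that model directly, avoiding Lemma~\ref{34iso} entirely. Your route is slightly more elementary and self-contained for the $B_n$ case; the paper's route has the advantage of running in parallel with the companion $D_n$ proposition (where the residual piece is four-dimensional and Lemma~\ref{34iso}(2) plays the analogous role), so the two computations share a common template. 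Either way the bookkeeping with Hilbert-symbol exponents is the only real work, and your reduction of $n(n-1)/2+n^2$ to $n(n-3)/2$ modulo~$2$ via the parity of $n(n+1)$ is clean.
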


\begin{proof}We will show that the following statements are equivalent:
\begin{enumerate}
\item $\textbf{SO}(q)$ is of type $B_{n,n}$.
\item $q\cong \langle 1, -1\rangle^{n-1} \oplus q'$ where $q'$ is an isotropic 3-dimensional form.
\item $q\cong \langle 1, -1\rangle^{n-1} \oplus q'$ where $c(q')=(-1,-\det q')$.
\item $c(q)=(-1,-1)^{\frac{n(n-3)}{2}}(-1, \det q )^n$.
\end{enumerate}

First (1) is equivalent to (2) by the classification of algebraic $k_v$-groups in \cite{T}.  
Next, (2) is equivalent to (3) by Proposition \ref{34iso} (1).  
Lastly (3) is equivalent to (4) by the following computation:
 \begin{align*}
c(q)	&=c\left( \langle 1, -1\rangle^{n-1} \oplus q'\right)\\
		&=c(\langle 1, -1\rangle^{n-1})\  c(q')\  ((-1)^{n-1}, \det q')\\
		&=(-1,-1)^{\frac{(n-1)(n-2)}{2}}\  (-1,-\det q')\  (-1, \det q')^{n-1}\\
		&=(-1,-1)^{\frac{(n-1)(n-2)}{2}+1}\  (-1, \det q' )^{n}\\
		&=(-1,-1)^{\frac{(n^2-3n+2+2)}{2}}\  (-1, (-1)^{n-1}\det q)^{n}\\
		&=(-1,-1)^{\frac{n^2-3n}{2}+2}\  (-1, \det q)^{n}\\
		&=(-1,-1)^{\frac{n(n-3)}{2}}\  (-1, \det q)^{n}.
\end{align*}
\end{proof}

\begin{prop}
Let $k$ be a number field, $v\in V_k$ be a finite place, and $q$ be a $(2n)$-dimensional quadratic form over $k_v$.  
Then the local index of the $k_v$-group $\mathbf{SO}(q)$  is ${}^1D_{n,n-2}$ if and only if \begin{align}\label{formula2}\mathrm{disc}\, q=1\qquad \mbox{and} \qquad   c(q)=-(-1,-1)^{\frac{n(n-1)}{2}}.\end{align}
\end{prop}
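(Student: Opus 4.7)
The plan is to adapt the chain-of-equivalences strategy from the preceding proposition, showing that the following statements are equivalent:
\begin{enumerate}
\item $\mathbf{SO}(q)$ is of type ${}^1D_{n,n-2}$.
\item $q\cong \langle 1,-1\rangle^{n-2}\oplus q'$, where $q'$ is a $4$-dimensional anisotropic form.
\item $q\cong \langle 1,-1\rangle^{n-2}\oplus q'$, where $\mathrm{disc}(q')=1$ and $c(q')=-(-1,-1)$.
\item $\mathrm{disc}(q)=1$ and $c(q)=-(-1,-1)^{n(n-1)/2}$.
\end{enumerate}

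The equivalence (1)$\Leftrightarrow$(2) will come from Tits' classification of semisimple $k_v$-groups \cite{T}. The leading superscript ${}^1$ says we are looking at an inner form of type $D_n$, which for orthogonal groups is equivalent to $\mathrm{disc}(q)$ being trivial in $k_v^{\times}/(k_v^{\times})^2$. Given this, the $k_v$-rank of $\mathbf{SO}(q)$ equals the Witt index of $q$, so rank $n-2$ forces the anisotropic kernel to have dimension $4$; over the local field $k_v$ this kernel is automatically of trivial discriminant (the alternatives of Witt index $n$ or $n-1$ are excluded respectively by comparing $c(q)$ to $c(h^n)$ and by the fact that a $2$-dimensional anisotropic form has $\det\ne -1$). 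Next, (2)$\Leftrightarrow$(3) is immediate from Lemma \ref{34iso}(2).

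For (3)$\Rightarrow$(4), the plan is to plug into the product formula \eqref{productformula} using $c(\langle 1,-1\rangle^{n-2})=(-1,-1)^{(n-2)(n-3)/2}$, $\det\langle 1,-1\rangle^{n-2}=(-1)^{n-2}$, and $\det q'=1$. This yields
\[
c(q)=-(-1,-1)^{(n-2)(n-3)/2+1},
\]
whose exponent differs from $n(n-1)/2$ by $2(2-n)$, which is even; hence $c(q)=-(-1,-1)^{n(n-1)/2}$, and a parallel calculation with determinants confirms $\mathrm{disc}(q)=1$. For the converse (4)$\Rightarrow$(3), the cleanest route is to use local existence (Theorem \ref{localexistence}) to build a $4$-dimensional form $q'$ with $\mathrm{disc}(q')=1$ and $c(q')=-(-1,-1)$ (anisotropic by Lemma \ref{34iso}(2)), and then apply local uniqueness (Theorem \ref{localuniqueness}): the form $\langle 1,-1\rangle^{n-2}\oplus q'$ shares dimension, discriminant, and Hasse invariant with $q$, so is isometric to it.

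The main obstacle is nothing conceptual; it is the careful bookkeeping needed to track the parities of $(-1,-1)$-exponents and to convert between $\det$ and $\mathrm{disc}$ at the two relevant dimensions $4$ and $2n$. Once these reductions are performed, the equivalences fall into place exactly as in the analogous $B_n$-type proposition above.
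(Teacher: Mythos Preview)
Your proposal is correct and follows essentially the same chain of equivalences as the paper's proof, with the same appeals to Tits' classification for (1)$\Leftrightarrow$(2), Lemma~\ref{34iso}(2) for (2)$\Leftrightarrow$(3), and the product formula for (3)$\Leftrightarrow$(4). If anything, you are slightly more careful than the paper in making the direction (4)$\Rightarrow$(3) explicit via local existence and uniqueness, whereas the paper leaves this implicit in its ``by the following computations.''
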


\begin{proof}We will show that the following statements are equivalent:
\begin{enumerate}
\item $\mathbf{SO}(q)$ is of type ${}^1D_{n,n-2}$.
\item $q=\langle 1, -1\rangle^{n-2} \oplus q'$ where $q'$ is an anisotropic 4-dimensional form.
\item $q=\langle 1, -1\rangle^{n-2} \oplus q'$ where $\mathrm{disc}\,q'=1$ and $c(q')=-(-1,-1)$  .
\item $\mathrm{disc}\, q=1$ and $c(q)=-(-1,-1)^{\frac{n(n-1)}{2}}$.
\end{enumerate}

First (1) is equivalent to (2) by the classification of algebraic $k$-groups in \cite{T}.  
Next, (2) is equivalent to (3) by Proposition \ref{34iso} (2).  
Lastly (3) is equivalent to (4) by the following computations:
\begin{equation*}
\begin{aligned}
\mathrm{disc}\, q	&=\mathrm{disc}\left(\langle 1, -1\rangle^{n-2} \oplus q'\right)\\
				&=\mathrm{disc}(\langle 1, -1\rangle^{n-2})\  \mathrm{disc}\, q'\\
				&=1.\\ \\ \\ \\ \\ \\ \\
\end{aligned}	
\hspace{6pc}
\begin{aligned}		
c(q)		&=c\left(\langle 1, -1\rangle^{n-2} \oplus q'\right)\\
		&=c(\langle 1, -1\rangle^{n-2})\  c(q')\  ((-1)^{n-2}, \det q')\\
		&=(-1,-1)^{\frac{(n-2)(n-3)}{2}}\  -(-1,-1)\\
		&=-(-1,-1)^{\frac{(n-2)(n-3)}{2}+1}\\
		&=-(-1,-1)^{\frac{(n^2-5n-6+2)}{2}}\\
		&=-(-1,-1)^{\frac{(n^2-n)}{2}-2(n-1)}\\
		&=-(-1,-1)^{\frac{n(n-1)}{2}}.
\end{aligned}
\end{equation*}
\end{proof}

\begin{table}
\begin{center}
{\footnotesize
	\begin{tabular}{|l|l|l|} \hline
ÊÊÊÊÊÊ		\hfill Type \hfill \hfill& \hfillÊ  Classical InvariantsÊ\hfill \hfill & \hfill Tits Index \hfill \hfill	\\ 
		\hline  
		& & \\
	
		$B_{n,n}$ &ÊÊÊÊÊÊÊÊÊ\specialcell{$\dim(q)=2n+1$\\ $\det(q)=$anything\\ $c(q)=(-1,-1)^{\frac{n(n-3)}{2}}(-1, \det(q) )^n$}Ê &  $\xy 
		
		\POS (0,0) *\cir<0pt>{} ="a",
		\POS (20,0) *\cir<0pt>{} ="b",
		\POS (30,0) *\cir<0pt>{} ="c",
		\POS (55,0) *\cir<0pt>{} ="d",
		\POS (70,0) *\cir<0pt>{} ="e",
		\POS (62,0) *+{>} =">",
		
		\POS "a" \ar@{-}^<{} "b",
		\POS "b" \ar@{.}^<{} "c",
		\POS "c" \ar@{-}^<{} "d",
		\POS "d" \ar@{=}^<{} "e",
		
		\POS (0,0) *\xycircle(0,2){-} ="p1",
		\POS (15,0) *\xycircle(0,2){-}="p2",
		\POS (40,0) *\xycircle(0,2){-}="pn-2",
		\POS (55,0) *\xycircle(0,2){-}="pn-1",
		\POS (70,0) *\xycircle(0,2){-}="pn",
		
		\POS (0,0) *\xycircle(3,4){-} ="a1",
		\POS (15,0) *\xycircle(3,4){-} ="a2",
		\POS (40,0) *\xycircle(3,4){-} ="an-2",
		\POS (55,0) *\xycircle(3,4){-} ="an-1",
		\POS (70,0) *\xycircle(3,4){-} ="an",
		
		\endxy$ \\ & & \\
		
		 $B_{n,n-1}$&ÊÊÊÊÊÊÊÊÊ\specialcell{$\dim(q)=2n+1$\\ $\det(q)=$anything\\ $c(q)=-(-1,-1)^{\frac{n(n-3)}{2}}(-1, \det(q) )^n$} & $\xy 
		
		\POS (0,0) *\cir<0pt>{} ="a",
		\POS (20,0) *\cir<0pt>{} ="b",
		\POS (30,0) *\cir<0pt>{} ="c",
		\POS (55,0) *\cir<0pt>{} ="d",
		\POS (70,0) *\cir<0pt>{} ="e",
		\POS (62,0) *+{>} =">",
		
		\POS "a" \ar@{-}^<{} "b",
		\POS "b" \ar@{.}^<{} "c",
		\POS "c" \ar@{-}^<{} "d",
		\POS "d" \ar@{=}^<{} "e",
		
		\POS (0,0) *\xycircle(0,2){-} ="p1",
		\POS (15,0) *\xycircle(0,2){-}="p2",
		\POS (40,0) *\xycircle(0,2){-}="pn-2",
		\POS (55,0) *\xycircle(0,2){-}="pn-1",
		\POS (70,0) *\xycircle(0,2){-}="pn",
		
		\POS (0,0) *\xycircle(3,4){-} ="a1",
		\POS (15,0) *\xycircle(3,4){-} ="a2",
		\POS (40,0) *\xycircle(3,4){-} ="an-2",
		\POS (55,0) *\xycircle(3,4){-} ="an-1",
		
		\endxy$ 
		\\ & & \\
	
		${}^1D_{n,n}^{(1)}$ &ÊÊÊÊÊ\specialcell{$\dim(q)=2n$\\ $\det(q)=(-1)^n$ \quad (i.e. $\mathrm{disc}(q)=1$)\\ $c(q)=(-1,-1)^{\frac{n(n-1)}{2}}$} Ê &  $\xy 
		
		\POS (0,0) *\cir<0pt>{} ="a",
		\POS (20,0) *\cir<0pt>{} ="b",
		\POS (30,0) *\cir<0pt>{} ="c",
		\POS (55,0) *\cir<0pt>{} ="d",
		\POS (70,10) *\cir<0pt>{} ="e",
		\POS (70,-10) *\cir<0pt>{} ="f",
		
		\POS "a" \ar@{-}^<{} "b",
		\POS "b" \ar@{.}^<{} "c",
		\POS "c" \ar@{-}^<{} "d",
		\POS "d" \ar@{-}^<{} "e",
		\POS "d" \ar@{-}^<{} "f",
		
		\POS (0,0) *\xycircle(0,2){-} ="p1",
		\POS (15,0) *\xycircle(0,2){-}="p2",
		\POS (40,0) *\xycircle(0,2){-}="pn-3",
		\POS (55,0) *\xycircle(0,2){-}="pn-2",
		\POS (70,10) *\xycircle(0,2){-}="pn-1",
		\POS (70,-10) *\xycircle(0,2){-}="pn",
		
		\POS (0,0) *\xycircle(3,4){-} ="a1",
		\POS (15,0) *\xycircle(3,4){-} ="a2",
		\POS (40,0) *\xycircle(3,4){-} ="an-3",
		\POS (55,0) *\xycircle(3,4){-} ="an-2",
		\POS (70,10) *\xycircle(3,4){-} ="an-1",
		\POS (70,-10) *\xycircle(3,4){-} ="an",
		
		\endxy$ \\ & & \\
		
		 		${}^1D_{n,n-2}^{(1)}$ &ÊÊÊÊÊ\specialcell{$\dim(q)=2n$\\ $\det(q)=(-1)^n$ \quad (i.e. $\mathrm{disc}(q)=1$)\\ $c(q)=-(-1,-1)^{\frac{n(n-1)}{2}}$} Ê &  $\xy 
		
		\POS (0,0) *\cir<0pt>{} ="a",
		\POS (20,0) *\cir<0pt>{} ="b",
		\POS (30,0) *\cir<0pt>{} ="c",
		\POS (55,0) *\cir<0pt>{} ="d",
		\POS (70,10) *\cir<0pt>{} ="e",
		\POS (70,-10) *\cir<0pt>{} ="f",
		
		\POS "a" \ar@{-}^<{} "b",
		\POS "b" \ar@{.}^<{} "c",
		\POS "c" \ar@{-}^<{} "d",
		\POS "d" \ar@{-}^<{} "e",
		\POS "d" \ar@{-}^<{} "f",
		
		\POS (0,0) *\xycircle(0,2){-} ="p1",
		\POS (15,0) *\xycircle(0,2){-}="p2",
		\POS (40,0) *\xycircle(0,2){-}="pn-3",
		\POS (55,0) *\xycircle(0,2){-}="pn-2",
		\POS (70,10) *\xycircle(0,2){-}="pn-1",
		\POS (70,-10) *\xycircle(0,2){-}="pn",
		
		\POS (0,0) *\xycircle(3,4){-} ="a1",
		\POS (15,0) *\xycircle(3,4){-} ="a2",
		\POS (40,0) *\xycircle(3,4){-} ="an-3",
		\POS (55,0) *\xycircle(3,4){-} ="an-2",
		
		\endxy$ \\ & & \\

		${}^2D_{n,n-1}^{(1)}$ &ÊÊÊÊÊ\specialcell{$\dim(q)=2n$\\ $\det(q)\neq(-1)^n$ \quad (i.e. $\mathrm{disc}(q)\neq1$)\\ $c(q)=$ anything} Ê &  $\xy 
		
		\POS (0,0) *\cir<0pt>{} ="a",
		\POS (20,0) *\cir<0pt>{} ="b",
		\POS (30,0) *\cir<0pt>{} ="c",
		\POS (55,0) *\cir<0pt>{} ="d",
		\POS (70,10) *\cir<0pt>{} ="e",
		\POS (70,-10) *\cir<0pt>{} ="f",
		
		\POS "a" \ar@{-}^<{} "b",
		\POS "b" \ar@{.}^<{} "c",
		\POS "c" \ar@{-}^<{} "d",
		
		\POS (0,0) *\xycircle(0,2){-} ="p1",
		\POS (15,0) *\xycircle(0,2){-}="p2",
		\POS (40,0) *\xycircle(0,2){-}="pn-3",
		\POS (55,0) *\xycircle(0,2){-}="pn-2",
		\POS (70,10) *\xycircle(0,2){-}="pn-1",
		\POS (70,-10) *\xycircle(0,2){-}="pn",

		\POS (0,0) *\xycircle(3,4){-} ="a1",
		\POS (15,0) *\xycircle(3,4){-} ="a2",
		\POS (40,0) *\xycircle(3,4){-} ="an-3",
		\POS (55,0) *\xycircle(3,4){-} ="an-2",
		\POS (70,0) *\xycircle(3,15){-} ="an-1",

		{\POS (70,10);(70,-10) \crv{(70,10)&(61,10)&(52,0)&(61,-10)&(70,-10)}}

		\endxy$ \\ & & \\
		\hline
		\end{tabular}
		\caption{Dictionary between the classical invariants of $q$ and the index of $\mathbf{SO}(q)$.}
		\label{tablequadratic}
}
\end{center}
\end{table}

\begin{prop}\label{simparamgrp}
Let $k$ be a number field, $m$ be odd, and $q$ and $q'$ be $m$-dimensional quadratic form over $k$.  
Then $q$ and $q'$ are isogroupic if and only if they are similar.
\end{prop}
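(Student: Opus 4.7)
The plan is to prove both implications by relating the quadratic form to its orthogonal group through its local invariants. The implication ``similar $\Rightarrow$ isogroupic'' is already established by Lemma \ref{isosimrep}(2), so I focus on the converse.

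For the converse, the key observation is that in odd dimension $m=2n+1$, scaling acts nontrivially on determinants modulo squares: for $\lambda \in k^\times$, we have $\det(\lambda q) = \lambda^m \det q \equiv \lambda \det q \pmod{(k^\times)^2}$, since $\lambda^m = \lambda \cdot (\lambda^{(m-1)/2})^2$. Therefore setting $\tilde q := (\det q)\, q$ and $\tilde q' := (\det q')\, q'$, I obtain forms that are similar to $q$ and $q'$ respectively, satisfy $\det \tilde q = \det \tilde q' = 1 \in k^\times/(k^\times)^2$, and have $\mathbf{SO}(\tilde q) = \mathbf{SO}(q) \cong \mathbf{SO}(q') = \mathbf{SO}(\tilde q')$ by Lemma \ref{isosimrep}(2) (a direct verification gives actual equality, not just isomorphism). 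It therefore suffices to show that two isogroupic odd-dimensional $k$-forms of trivial determinant are isometric; then $q \sim \tilde q \cong \tilde q' \sim q'$ gives similarity of $q$ and $q'$.

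To prove $\tilde q \cong \tilde q'$, I apply Theorem \ref{localglobaluniqueness} and check isometry place by place. At complex places only dimension matters, by Theorem \ref{localuniqueness}(1). At a real place $v$, base change of the $k$-isomorphism gives an $\mathbb R$-isomorphism $\mathbf{SO}(\tilde q_v) \cong \mathbf{SO}(\tilde q'_v)$, forcing the unordered signature pairs to agree: $\{p, m-p\} = \{p', m-p'\}$. The determinant condition over $\mathbb R$ reads $(-1)^{m-p} \equiv 1 \pmod{(\mathbb R^\times)^2}$, so $m-p$ is even and, since $m$ is odd, $p$ is odd; similarly $p'$ is odd. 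Were $p' = m - p$, then $p + p' = m$ would be odd, contradicting $p, p'$ both odd. Hence $p = p'$, the signatures match, and Theorem \ref{localuniqueness}(2) gives isometry. At a nonarchimedean place $v$, base change yields $\mathbf{SO}(\tilde q_v) \cong \mathbf{SO}(\tilde q'_v)$ as algebraic $k_v$-groups, so they share the same Tits index, which must be either $B_{n,n}$ or $B_{n,n-1}$. By the two preceding propositions (formula (\ref{formula1}) and its negation), the Tits index together with $\det \tilde q_v$ pins down $c(\tilde q_v)$ exactly; since determinants agree and the Tits indices agree, $c(\tilde q_v) = c(\tilde q'_v)$. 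Then Theorem \ref{localuniqueness}(3) yields local isometry.

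The main subtlety is the real-place analysis. The algebraic-group isomorphism over $\mathbb R$ only determines signatures as an unordered pair, so the ``flip'' $\{p,m-p\} \leftrightarrow \{m-p,p\}$ must be ruled out by another invariant. The oddness of $m$ combined with the normalization $\det \tilde q_v = 1$ is exactly what breaks this symmetry; in even dimension the analogous argument fails (indeed $q$ and $-q$ are similar but can have different signatures in even dimension), which is consistent with the fact that the proposition is sharply odd-dimensional.
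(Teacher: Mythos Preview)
Your proof is correct and follows essentially the same approach as the paper's: both normalize within the similarity class to pin down the determinant, then use the Tits index (via formula~\eqref{formula1}) together with the determinant to recover the Hasse invariant at finite places, and invoke Theorem~\ref{localglobaluniqueness}. The only cosmetic differences are that the paper scales $q$ by $a=\det q'/\det q$ to match $\det q'$ directly (rather than normalizing both forms to determinant~$1$), and at real places the paper simply asserts that index plus determinant fixes the signature, whereas you spell out the parity argument that rules out the signature flip.
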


\begin{proof}
In Lemma \ref{isosimrep}, we showed similar forms are isogroupic.  
Now suppose $q'$ represents $\mathbf{G}:=\mathbf{SO}(q)$.   
Let $a\in k^\times/(k^\times)^2$ such that $\det q'=a\det q$.  
We shall show $aq$ and $q'$ are isometric.  
Note that $aq$ also represents $\mathbf{G}$, and since $m$ is odd, $\det (aq)=a \det q=\det q'$.  
We now look at the forms locally.
\begin{enumerate}
\item At each complex place $v\in V_k$, $aq$ and $q'$ have the same dimension, and hence are isometric by Theorem \ref{localuniqueness} (a).
\item At each real place $v\in V_k$, since $m$ is odd,  the index of $\mathbf{G}$ together with the determinant $\det q'$ uniquely determines the signature of $q'\otimes k_v$.  Hence at each real place, $\mathrm{sgn}(q')=\mathrm{sgn}(aq)$.  By Theorem \ref{localuniqueness} (b), $aq$ and $q'$ are isometric at each real place.
\item  At each finite place $v\in V_k$, since $m$ is odd, equation \eqref{formula1} shows that the index of $\mathbf{G}$ together with $\det q'$ uniquely determines $c(q')$. Hence at each finite place, $c(q')=c(aq)$.   By Theorem \ref{localuniqueness} (c), $aq$ and $q'$ are isometric at each finite place.
\end{enumerate}
By Theorem \ref{localglobaluniqueness}, $aq$ and $q'$ are isometric over $k$ and the result follows.
\end{proof}

In \cite[2.6]{GPS}, there is an analogous result for admissible hyperbolic pairs of any dimension.
Their proof heavily uses hyperbolic geometry while our proof is algebraic in nature and applies to all odd dimensional forms.

\begin{prop}\label{titsdeterminesbn}
Let $k$ be a number field, $q_1$ and $q_2$ be $m=2n+1$-dimensional quadratic forms over $k$, and  $\mathbf{G}_i=\mathbf{SO}(q_i)$.  
Then $\mathbf{G}_1$ and $\mathbf{G}_2$ are $k$-isomorphic if and only if the groups $\mathbf{G}_1\otimes k_v$ and $\mathbf{G}_2\otimes k_v$ have the same local index for all $v\in V_k$.  

In particular, 
the $k$-isomorphism class of $\mathbf{G}:=\mathbf{SO}(q)$ is determined by its index at all places.
\end{prop}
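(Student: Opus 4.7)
\smallskip

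The plan is to handle the forward direction formally and then reduce the converse to a quadratic form statement. A $k$-isomorphism $\mathbf{G}_1 \xrightarrow{\sim} \mathbf{G}_2$ base-changes to $k_v$-isomorphisms at every place, and the Tits index is obviously a $k_v$-isomorphism invariant, so one direction is free. For the converse, I would invoke Proposition \ref{simparamgrp}: since $m$ is odd, the $k$-isomorphism class of $\mathbf{SO}(q_i)$ is detected by the similarity class of $q_i$, so it suffices to show that if $\mathbf{SO}(q_1)$ and $\mathbf{SO}(q_2)$ share a local index at every $v$, then $q_1$ and $q_2$ are similar over $k$.

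To produce an explicit similarity factor, I would set $a := (\det q_1)(\det q_2) \in k^\times$ (any lift of the product) and aim to show $aq_1 \cong q_2$ as quadratic $k$-forms; by Theorem \ref{localglobaluniqueness} it is enough to establish this locally. Dimensions agree automatically, and because $m$ is odd, $\det(aq_1) = a\det q_1 \equiv (\det q_1)^2 \det q_2 \equiv \det q_2 \pmod{(k^\times)^2}$. Complex places then follow from Theorem \ref{localuniqueness}(a).

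At a real place $v$, the coincidence of local indices forces $\{p_1^v, m-p_1^v\} = \{p_2^v, m-p_2^v\}$ where $(p_i^v, m-p_i^v)$ is the signature. Using $\mathrm{sgn}_v(\det q_i) = (-1)^{m-p_i^v}$ and the fact that $m$ is odd, one checks that $\mathrm{sgn}_v(a)=+1$ exactly when $p_1^v=p_2^v$ and $\mathrm{sgn}_v(a)=-1$ exactly when $p_2^v = m-p_1^v$, so multiplication by $a$ converts the signature of $(q_1)_v$ into that of $(q_2)_v$. At a finite place $v$, Lemma \ref{simc} gives $c(aq_1)_v = (a, (-1)^n)_v\, c(q_1)_v$, while the equal-index hypothesis, read through (\ref{formula1}) and Table \ref{tablequadratic}, says exactly that $c(q_1)_v\,(-1,\det q_1)_v^{\,n} = c(q_2)_v\,(-1,\det q_2)_v^{\,n}$; rearranging and using $(-1,x)^n = ((-1)^n,x)$ yields $c(q_1)_v c(q_2)_v = (\det q_1\det q_2, (-1)^n)_v = (a, (-1)^n)_v$, so $c(aq_1)_v = c(q_2)_v$. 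Theorem \ref{localuniqueness}(c) then gives local isometry at each finite place. Hasse--Minkowski concludes $aq_1\cong q_2$ over $k$, which is similarity of $q_1$ and $q_2$, and the ``in particular'' assertion is just the contrapositive packaging of what has been proved.

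The main obstacle is not any individual local comparison but the global coherence: a single element $a\in k^\times$ must simultaneously (i) equalize determinants, (ii) carry the correct sign at every real place to permute the two possible signatures allowed by the local Tits index, and (iii) produce precisely the Hilbert-symbol correction to the Hasse invariant demanded at every finite place. The canonical choice $a=\det q_1\det q_2$ works because the same parity $n$ governs both the formula (\ref{formula1}) for the local index and the similarity-twist formula of Lemma \ref{simc}, so the calibration needed for the Hasse invariants is automatic; the sign check at the real places likewise hinges on $m$ being odd, which is the same hypothesis used in Proposition \ref{simparamgrp}.
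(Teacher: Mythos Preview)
Your argument is correct and follows essentially the same route as the paper: scale one form by the product (equivalently, ratio) of determinants to equalize determinants, then verify local isometry at every place using that for odd $m$ the local index together with the determinant pins down the signature (real places) and the Hasse invariant (finite places), and conclude via Hasse--Minkowski. The paper's proof is simply terser, citing the observation from the proof of Proposition~\ref{simparamgrp} that index plus determinant determines the local isometry class, whereas you carry out the real-place sign analysis and the finite-place Hilbert-symbol bookkeeping explicitly; your framing through Proposition~\ref{simparamgrp} is a cosmetic wrapper around the same content.
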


\begin{proof}
If $\mathbf{G}_1$ and $\mathbf{G}_2$ are $k$-isomorphic, then $\mathbf{G}_1\otimes k_v$ and $\mathbf{G}_2\otimes k_v$ are $k_v$-isomorphic for all $v\in V_k$, and hence by the Tits Classification Theorem \cite[Theorem 2.7.1]{T}, they have the same index at every place.

We now prove the other direction and suppose that $\mathbf{G}_1\otimes k_v$ and $\mathbf{G}_2\otimes k_v$ have the same index for all $v\in V_k$.  
We may replace $q_2$ with the similar form $\frac{\det q_1}{\det q_2} q_2$, and since $m$ is odd, we may now assume $\det q_1 = \det q_2$.  
As we observed in the proof of the previous proposition, at local places the index  and the determinant determine the isometry class of a representing form.  
Therefore $q_1 \otimes k_v $ and $q_2\otimes k_v$ are isometric for all $v\in V_k$, and hence by Theorem \ref{localglobaluniqueness}, $q_1$ and $q_2$ are isometric.  
The result follows from Lemma \ref{isosimrep}.
\end{proof}


\section{Fields of Definition and the Proof of Theorem A}\label{sectionfieldthrma}

Let $\mathbf{G}$ be a semisimple algebraic group over $\C$ and let $\Gamma\subset \mathbf{G}(\C)$ be a Zariski-dense subgroup.  
A \textbf{field of definition}
for $\Gamma$ is a field $F\subset \C$ for which there exits an $F$-form $\mathbf{G}'$ of $\mathbf{G}$ and an isomorphism $\varphi: \mathbf{G}\to \mathbf{G}'$ defined over a finite extension of $F$ such that $\varphi(\Gamma)\subset \mathbf{G}'(F)$ \cite[10.3.10]{MaR2}.  
Vinberg showed \cite{Vin} that for Zariski-dense groups, there is a unique minimal field of definition
$$k_{\mathbf{G}}(\Gamma):=\Q(\mathrm{Tr}(\mathrm{Ad}_{\mathbf{G}}(\gamma))\ | \ \gamma \in \Gamma),$$ where $\mathrm{Ad}_{\mathbf{G}}$ is the adjoint representation of $\mathbf{G}$.
Furthermore this is an invariant of the commensurability class.
In general, the minimal field of definition of a Zariski-dense $\Gamma$ need not coincide with the field that $\mathbf{G}$ is defined over.  
Furthermore, the same abstract group can have different fields of definition depending on the ambient group.
However, \cite[Prop. 2.6]{PR} showed that for an absolutely almost simple group $\mathbf{G}$ over a number field $k$, and $\Gamma\subset \mathbf{G}(k)$ arithmetic and Zariski-dense, the minimal field of definition of $\Gamma$ coincides with the field of definition of the group (i.e. $k_{\mathbf{G}}(\Gamma)=k)$.   
As such, the field of definition of Construction \ref{quadcon} coincides with the minimal field of definition in the sense of Vinberg \cite{Vin} for all quadratic forms $q$ of dimension $m>2$ and $m\ne 4$ (since it is in this case and only in this case that $\mathbf{SO}(q)$ is not absolutely almost simple).

For an arbitrary totally geodesic subspace $N\subset M$, we do not expect to see a relationship between $k(N)$ and $k(M)$ as is demonstrated by the following examples.

\begin{ex}We show three methods of constructing of totally geodesic subspaces $N\subset M$ where both $N$ and $M$ come from quadratic forms and where each realizes a different relationship between  $k(N)$ and $k(M)$.
\begin{enumerate}
\item Subforms produce $N\subset M$ such that $k(N)=k(M)$.

\noindent Let $k$ be an arbitrary number field and let $q$ be a quadratic form over $k$ of dimension $\ge 4$.  Let $r\subset q$ be a subform of dimension $\ge 3$.  Then $\mathbf{SO}(r)$ naturally sits inside $\mathbf{SO}(q)$ as a $k$-subgroup.  Then $k(N)=k=k(M)$.

\vspace{0.5pc}

\item Extension of scalars produce $N\subset M$ such that  $k(N)\subsetneq k(M)$.

\noindent Let $k/\Q$ be a nontrivial finite extension and let $q$ be a quadratic form over $\Q$ of dimension $\ge 3$.  Then $\mathbf{SO}(q)$ naturally sits as a $\Q$-subgroup in the diagonal of $R_{k/\Q}(\mathbf{SO}(q\otimes_{\Q} k))$.  Then $k(N)=\Q\subsetneq k =k(M).$

\vspace{0.5pc}

\item Killing form produces $N\subset M$ such that  $k(N)\supsetneq k(M)$.

\noindent Let $k/\Q$ be a nontrivial finite extension, let $q$ be a quadratic form over $k$ of dimension $\ge 3$,  let $\mathbf{H}=\mathbf{SO}(q)$, and 
$\mathbf{G}=\mathbf{SO}(\mathrm{Lie}(R_{k/\Q}(\mathbf{H})),\kappa)$ where $\kappa$ is the Killing form on $\mathrm{Lie}(R_{k/\Q}(\mathbf{H}))$.  
Then, via the adjoint representation,
$$\mathbf{H}(k)= (R_{k/\Q}(\mathbf{H}))(\Q)\subset (\mathbf{SO}(\mathrm{Lie}(R_{k/\Q}(\mathbf{H})),\kappa))^\circ(\Q)\subset\mathbf{G}(\Q).$$
Then $k(N)=k\supsetneq \Q =k(M).$
\end{enumerate}
\end{ex}

Observe that in the above examples, when $k(N)\neq k(M)$, the difference between $\dim N$ and $\dim M$ was quite large.  As the next results show, if the dimensions of $N$ and $M$ are sufficiently close, there is a relationship between their fields of definition.

\begin{lem}\label{fielddefdim}
For $i=1,2$, let $\mathbf{H}_i$ be semisimple $k_i$-groups such that $\mathbf{H}_1$ is absolutely almost simple and  $R_{k_1/\Q}(\mathbf{H}_1)$ is $\Q$-isogenous to $R_{k_2/\Q}(\mathbf{H}_2)$.
\begin{enumerate}
\item Then $k_2$ is isomorphic to a subfield of $k_1$.
\item If $\dim \mathbf{H}_2<2\dim \mathbf{H}_1$, then there is a field isomorphism $\tau:k_1\to k_2$ and $\mathbf{H}_1\times_{\kappa}\mathrm{spec}\, k_2$ and $\mathbf{H}_2$ are $k_2$-isogenous.
\end{enumerate}
\end{lem}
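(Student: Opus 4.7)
The plan is to leverage Proposition \ref{restrictionofscalars} twice: once to rewrite $\mathbf{H}_2$ as a restriction of scalars of some absolutely almost simple group, and then to use the uniqueness statement to identify that group with $\mathbf{H}_1$ and its base field with $k_1$.

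First I would observe that because $\mathbf{H}_1$ is absolutely almost simple, it is in particular $k_1$-simple, so $\mathbf{G}_1 := R_{k_1/\Q}(\mathbf{H}_1)$ is $\Q$-simple. The assumption that $\mathbf{G}_2 := R_{k_2/\Q}(\mathbf{H}_2)$ is $\Q$-isogenous to $\mathbf{G}_1$ forces $\mathbf{G}_2$ to be $\Q$-simple as well, which in turn forces $\mathbf{H}_2$ to be $k_2$-simple. Proposition \ref{restrictionofscalars}(1), applied to $\mathbf{H}_2$, then produces a number field $k_2' \supseteq k_2$ and an absolutely almost simple $k_2'$-group $\mathbf{H}_2'$ such that $\mathbf{H}_2$ is $k_2$-isogenous to $R_{k_2'/k_2}(\mathbf{H}_2')$. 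By transitivity of Weil restriction of scalars, $\mathbf{G}_2$ is $\Q$-isogenous to $R_{k_2'/\Q}(\mathbf{H}_2')$, so combined with the hypothesis we get a $\Q$-isogeny between $R_{k_1/\Q}(\mathbf{H}_1)$ and $R_{k_2'/\Q}(\mathbf{H}_2')$. The uniqueness statement Proposition \ref{restrictionofscalars}(2) then supplies a field isomorphism $\tau: k_1 \to k_2'$ together with a $k_2'$-isogeny between $\mathbf{H}_1 \times_\tau \mathrm{spec}\, k_2'$ and $\mathbf{H}_2'$.

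Assertion (1) is now immediate: $k_2$ is contained in $k_2'$, which is abstractly isomorphic to $k_1$, so $k_2$ embeds into $k_1$. For assertion (2), I would compute dimensions as $k_2$-varieties. Since $\dim_{k_2} R_{k_2'/k_2}(\mathbf{H}_2') = [k_2':k_2]\cdot \dim_{k_2'} \mathbf{H}_2'$, the isogeny from $\mathbf{H}_2$ to $R_{k_2'/k_2}(\mathbf{H}_2')$ yields
\[
\dim \mathbf{H}_2 \;=\; [k_2':k_2]\cdot \dim \mathbf{H}_2' \;=\; [k_2':k_2]\cdot \dim \mathbf{H}_1,
\]
where the last equality uses that $\mathbf{H}_2'$ is a twist of $\mathbf{H}_1$ (hence has the same Killing--Cartan type and the same dimension). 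The hypothesis $\dim \mathbf{H}_2 < 2\dim \mathbf{H}_1$ therefore forces $[k_2':k_2] = 1$, so $k_2' = k_2$ and $\tau$ is the desired isomorphism $k_1 \to k_2$; the $k_2$-isogeny between $\mathbf{H}_1 \times_\tau \mathrm{spec}\, k_2$ and $\mathbf{H}_2$ is then the one already produced above.

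The argument is essentially a bookkeeping exercise with Proposition \ref{restrictionofscalars} and Weil restriction, so there is no deep obstacle; the main thing to be careful about is the transitivity identity $R_{k_2/\Q} \circ R_{k_2'/k_2} = R_{k_2'/\Q}$ and the dimension formula for restriction of scalars, both of which are standard.
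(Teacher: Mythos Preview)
Your argument is correct and follows essentially the same route as the paper: reduce to $\mathbf{H}_2$ being $k_2$-simple, apply Proposition~\ref{restrictionofscalars}(1) to write it (up to isogeny) as $R_{k_2'/k_2}(\mathbf{H}_2')$, then use Proposition~\ref{restrictionofscalars}(2) to match $(k_1,\mathbf{H}_1)$ with $(k_2',\mathbf{H}_2')$; the only cosmetic differences are that the paper passes to adjoint groups to replace isogenies by isomorphisms and phrases the dimension count in part~(2) as counting $\overline{\Q}$-simple factors rather than via the restriction-of-scalars dimension formula.
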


\begin{proof}${}$\\
\textit{(1)}  Replacing $\mathbf{H}_i$ by their adjoint groups, we have $R_{k_1/\Q}(\mathbf{H}_1)$ and $R_{k_2/\Q}(\mathbf{H}_2)$ are $\Q$-isomorphic.
Since  $\mathbf{H}_1$ is absolutely simple, $R_{k_1/\Q}(\mathbf{H}_1)$ is $\Q$-simple and hence 
$R_{k_2/\Q}(\mathbf{H}_2)$ is $\Q$-simple.
It follows that $\mathbf{H}_2$ must be $k_2$-simple, and by Proposition \ref{restrictionofscalars} (1), there exists a field extension $k_1'/k_2$ and absolutely simple $k_1'$-group $\mathbf{H}_1'$ such that $R_{k_1'/k_2}(\mathbf{H}_1')$ and $\mathbf{H}_2$ are $k_2$-isomorphic.  
It follows that $R_{k_1/\Q}(\mathbf{H}_1)$ and $R_{k_1'/k_2}(R_{k_2/\Q}(\mathbf{H}_1'))\cong R_{k_1'/\Q}(\mathbf{H}_1')$ are $\Q$-isomorphic.  
By Proposition \ref{restrictionofscalars} (2), there is a field isomorphism $\tau:k_1\to k_1'$  and $\mathbf{H}_1\times_{\kappa}\mathrm{spec}\, k_2$ and $\mathbf{H}_1'$ are $k_1'$-isomorphic. 

\vspace{0.5pc}

\noindent \textit{(2)}
Our initial assumptions imply that $\mathbf{H}_2$ is $\overline{\Q}$-isomorphic to 
$\deg_{k_2}(k_1')$ copies of $\mathbf{H}_1$.  The restriction on dimension implies that $\mathbf{H}_2$ has precisely one such simple factor.  Hence $k_1'=k_2$ and the result follows.
\end{proof}

\begin{prop}\label{fielddefcontain}
Let $\mathbf{H}_1$ be an absolutely almost simple $k_1$-group and $\mathbf{G}$ be absolutely almost simple $k_2$-group, both of which are isotropic at precisely one infinite place, such that $\dim \mathbf{G}<2\dim \mathbf{H}_1.$  Suppose $R_{k_1/\Q}(\mathbf{H}_1)$ is $\Q$-isogenous to a $\Q$-subgroup of $R_{k_2/\Q}(\mathbf{G})$.
Then 
$k_1$ and $k_2$ are isomorphic.
\end{prop}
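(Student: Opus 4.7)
My strategy is to reduce to Lemma \ref{fielddefdim} twice: first to recognize the $\Q$-subgroup of $R_{k_2/\Q}(\mathbf{G})$ as itself a restriction of scalars attached to $k_1$, and then to match $k_1$ with $k_2$ using a Galois-equivariant analysis that leverages the unique-isotropic-place hypothesis.

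Let $\mathbf{H}\subseteq R_{k_2/\Q}(\mathbf{G})$ be the $\Q$-subgroup that is $\Q$-isogenous to $R_{k_1/\Q}(\mathbf{H}_1)$.  Because $\mathbf{H}_1$ is absolutely almost simple, $R_{k_1/\Q}(\mathbf{H}_1)$ is $\Q$-simple, and hence so is $\mathbf{H}$.  By Proposition \ref{restrictionofscalars}, there is a number field $k'$ and an absolutely almost simple $k'$-group $\mathbf{H}'$ with $\mathbf{H}$ $\Q$-isogenous to $R_{k'/\Q}(\mathbf{H}')$.  Any $\overline{\Q}$-simple factor of $\mathbf{H}'$ embeds with finite kernel into some simple factor of $R_{k_2/\Q}(\mathbf{G})\otimes\overline{\Q}\cong\prod_\tau \mathbf{G}^\tau$, so $\dim\mathbf{H}'\le\dim\mathbf{G}<2\dim\mathbf{H}_1$.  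Applying Lemma \ref{fielddefdim}(2) to the $\Q$-isogeny $R_{k'/\Q}(\mathbf{H}')\sim R_{k_1/\Q}(\mathbf{H}_1)$ then forces $k'\cong k_1$, and $\mathbf{H}$ becomes $\Q$-isogenous to $R_{k_1/\Q}(\mathbf{H}_1^{\ast})$ for some $k_1$-form $\mathbf{H}_1^{\ast}$ of $\mathbf{H}_1$.

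To conclude $k_1\cong k_2$, I decompose the embedding over $\overline{\Q}$ as
\[
\prod_{\sigma\in\Hom(k_1,\overline{\Q})}(\mathbf{H}_1^{\ast})^{\sigma}\ \hookrightarrow\ \prod_{\tau\in\Hom(k_2,\overline{\Q})}\mathbf{G}^{\tau}.
\]
For each $\sigma$, let $I_\sigma\subseteq\Hom(k_2,\overline{\Q})$ collect the $\tau$ for which the projection of $(\mathbf{H}_1^{\ast})^{\sigma}$ into $\mathbf{G}^{\tau}$ has finite kernel.  If $\sigma\ne\sigma'$ and $\tau\in I_\sigma\cap I_{\sigma'}$, the images would be two commuting simple subgroups of $\mathbf{G}^{\tau}$, each of dimension $\dim\mathbf{H}_1$; since the intersection of two commuting simple subgroups of a simple group is central and hence finite, their product has dimension $2\dim\mathbf{H}_1\le\dim\mathbf{G}$, contradicting the hypothesis.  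Thus the $I_\sigma$ are pairwise disjoint.  The embedding is $\Q$-rational, so $\sigma\mapsto I_\sigma$ is $\Gal(\overline{\Q}/\Q)$-equivariant, and the transitivity of the Galois action on $\Hom(k_2,\overline{\Q})$ ensures $\bigsqcup_\sigma I_\sigma=\Hom(k_2,\overline{\Q})$.

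The final step uses the hypothesis that each of $\mathbf{H}_1$ and $\mathbf{G}$ is isotropic at exactly one infinite place.  Over $\R$, the unique noncompact $\R$-simple factor of $R_{k_1/\Q}(\mathbf{H}_1^{\ast})(\R)$ (at the isotropic place $w_0$ of $k_1$) must land in the unique noncompact $\R$-simple factor of $R_{k_2/\Q}(\mathbf{G})(\R)$ (at the isotropic place $v_0$ of $k_2$); complexifying, this confines $I_{\sigma_0}$ (with $\sigma_0$ attached to $w_0$) to the at-most-two complex embeddings of $k_2$ attached to $v_0$.  Combined with disjointness, Galois equivariance, and the fact that Galois transitivity forces $|I_\sigma|$ to be constant across $\sigma$, one obtains $|I_\sigma|=1$ for every $\sigma$.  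The map $\sigma\mapsto I_\sigma$ is then a $\Gal(\overline{\Q}/\Q)$-equivariant bijection $\Hom(k_1,\overline{\Q})\to\Hom(k_2,\overline{\Q})$, which by the Galois correspondence gives $k_1\cong k_2$.  The most delicate point, and the main obstacle I anticipate, is the last one: ruling out a ``diagonal'' embedding of the LHS noncompact factor into several complex-conjugate factors on the RHS when $w_0$ and $v_0$ are of different types (one real, one complex) requires the cardinality cases above to be played off against one another carefully, and it is here that the isotropy and dimension hypotheses must be used in tandem.
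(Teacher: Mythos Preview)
Your approach is substantially more elaborate than the paper's and leaves the self-identified gap unfilled.  The paper's argument is short and direct: after passing to adjoint groups, the $\Q$-rational injection $\varphi\colon R_{k_1/\Q}(\mathbf{H}_1)\hookrightarrow R_{k_2/\Q}(\mathbf{G})$ restricts over $\R$ to an embedding of the unique noncompact factor $\mathbf{H}_1(k_{1,v_1})$ into the unique noncompact factor $\mathbf{G}(k_{2,v_2})$ (a noncompact simple Lie group cannot map nontrivially into a compact one).  Since $\varphi(\mathbf{H}_1(k_1))\subset\mathbf{G}(k_2)$, the Zariski closure $\mathbf{H}_2$ of $\varphi(\mathbf{H}_1(k_1))$ inside $\mathbf{G}$ is a $k_2$-subgroup, the paper asserts that $R_{k_1/\Q}(\mathbf{H}_1)$ is $\Q$-isogenous to $R_{k_2/\Q}(\mathbf{H}_2)$, and since $\dim\mathbf{H}_2\le\dim\mathbf{G}<2\dim\mathbf{H}_1$, a single application of Lemma~\ref{fielddefdim}(2) finishes.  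This bypasses your Galois-equivariant bookkeeping on $\overline{\Q}$-factors entirely; the role of the ``one isotropic place'' hypothesis in the paper is solely to pin down the single archimedean factor in which to take the Zariski closure and to guarantee that closure is genuinely defined over $k_2$ rather than a proper subfield.

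Your argument is sound through the construction of the disjoint Galois-equivariant family $\{I_\sigma\}$ with constant cardinality $c$, but the final step---forcing $c=1$---is not actually carried out.  You correctly flag the obstacle, and it is a genuine one: when $w_0$ is real and $v_0$ is complex, complex conjugation fixes $\sigma_0$ but swaps the two embeddings lying over $v_0$, so Galois equivariance \emph{forces} $I_{\sigma_0}=\{\tau_0,\bar\tau_0\}$ and hence $c=2$, not $1$.  Nothing in your outline (disjointness, equivariance, the dimension bound as you have used it) excludes this, and concretely $k_1=\Q$, $k_2=\Q(i)$, $\mathbf{H}_1=\mathbf{G}=\mathbf{SL}_2$ with the unit map $\mathbf{SL}_2\to R_{\Q(i)/\Q}(\mathbf{SL}_2)$ realizes exactly this configuration.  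In the paper's applications (Theorems~\ref{thrmA}, \ref{thrmE}, \ref{thrmF}) the relevant fields are totally real so this mixed case does not occur; but as your write-up claims the proposition in full generality, you would need either to supply the missing step or to restrict explicitly to the totally real situation.  The paper's Zariski-closure route is the cleaner path precisely because it avoids this factor-by-factor case analysis.
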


\begin{proof}
Replace $\mathbf{H}_1$ and $\mathbf{G}$ by their adjoint groups and let $v_1$ (resp. $v_2$) denote the unique infinite place of $k_1$ (resp. $k_2$) where $\mathbf{H}_1$ (resp. $\mathbf{G}$) is isotropic.
Then there is an injective $\Q$-rational map, $$\varphi: R_{k_1/\Q}(\mathbf{H}_1) \to R_{k_2/\Q}(\mathbf{G}),$$
which induces an injective map of  $\R$-simple Lie groups
$$\varphi: \mathbf{H}_1(k_{1,v_1})\to \mathbf{G}(k_{2,v_2}).$$
Let $\mathbf{H}_2$ denote the Zariski-closure of $\varphi(\mathbf{H}_1(k))$ in $\mathbf{G}(k_{2,v_2})$.
Since $\varphi(\mathbf{H}_1(k_1))\subset \mathbf{G}(k_2)$, $\mathbf{H}_2$ is defined over $k_2$.
Observe that $R_{k_1/\Q}(\mathbf{H}_1)$ is $\Q$-isogenous to $R_{k_2/\Q}(\mathbf{H}_2)$  and by our assumption on dimension, $\dim \mathbf{H}_2\le \dim \mathbf{G}<2\dim \mathbf{H}_1$.  Therefore by Lemma \ref{fielddefdim} (2), the result follows.
\end{proof}


Observe that in the proof of Proposition \ref{fielddefcontain}, we use the fact that our groups are isotropic at precisely one infinite place to ensure that $\mathbf{H}_2$ is defined over $k_2$, instead of a proper subfield, 
and that the dimension of $\mathbf{H}_2$ satisfies the bounds of Lemma \ref{fielddefdim} (2).

\begin{prop}\label{samedim}
Let $M_1$ and $M_2$ be arithmetic locally symmetric spaces coming from quadratic forms  $q_1$ and $q_2$ of dimension $\ge 4$ over number fields $k_1$ and $k_2$ respectively.  
If $\Q TG(M_1)=\Q TG (M_2)$, then $\dim q_1= \dim q_2$.
\end{prop}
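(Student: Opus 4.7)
The key observation is that $M_i$ is itself a nonflat, finite volume, totally geodesic subspace of $M_i$, so $[M_i]\in \Q TG(M_i)$. The hypothesis $\Q TG(M_1)=\Q TG(M_2)$ therefore provides, for each $i\in\{1,2\}$, a nonflat finite volume totally geodesic $N_i\subset M_{3-i}$ commensurable with $M_i$. The plan is to translate this into a statement about algebraic groups and then compare dimensions of almost simple factors over $\overline{\Q}$.

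First I would apply Theorem \ref{totallygeoarearith} together with Lemma \ref{arithmeticsubgrp} to realize each $N_i$ as arising from a semisimple $\Q$-subgroup $\mathbf{H}_i$ of $\mathbf{G}_{3-i}':=R_{k_{3-i}/\Q}\mathbf{SO}(q_{3-i})$. Proposition \ref{commimpliesisothm} applied to the commensurability of $N_i$ with $M_i$ then yields a $\Q$-isogeny between $\mathbf{H}_i$ and $\mathbf{G}_i':=R_{k_i/\Q}\mathbf{SO}(q_i)$. The upshot is that $\mathbf{G}_1'$ is $\Q$-isogenous to a $\Q$-subgroup of $\mathbf{G}_2'$, and symmetrically.

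Next I would base change to $\overline{\Q}$. Let $X_i$ denote the common almost simple factor of $\mathbf{G}_i'\otimes\overline{\Q}$: this is isogenous to $\mathbf{SO}_{m_i}$ when $m_i\ge 5$, and to $\mathbf{SL}_2$ when $m_i=4$, since $\mathbf{SO}_4$ is isogenous to $\mathbf{SL}_2\times\mathbf{SL}_2$ over $\overline{\Q}$. Each almost simple factor $\mathbf{S}$ of $\mathbf{H}_1\otimes\overline{\Q}$ is isogenous to $X_1$ and embeds as a subgroup of the product $\mathbf{G}_2'\otimes\overline{\Q}=\prod_j \mathbf{T}_j$, where each $\mathbf{T}_j$ is isogenous to $X_2$. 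Because $\mathbf{S}$ is almost simple, each projection $\mathbf{S}\to\mathbf{T}_j$ has kernel either finite or all of $\mathbf{S}$; as the embedding is injective, at least one projection must be nontrivial, yielding $\dim X_1=\dim \mathbf{S}\le \dim \mathbf{T}_j=\dim X_2$. The symmetric argument gives $\dim X_2\le \dim X_1$, hence $\dim X_1=\dim X_2$.

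To finish, I would note $\dim \mathbf{SO}_m=\binom{m}{2}$ and $\dim \mathbf{SL}_2=3$. The equality $\dim X_1=\dim X_2$ together with $m_i\ge 4$ excludes the mixed case $\{m_1,m_2\}=\{4,m\}$ with $m\ge 5$ (since then $3<\binom{m}{2}$), and in the remaining cases forces $m_1=m_2$. The main subtleties are the treatment of $m_i=4$, where $\mathbf{SO}_4$ fails to be absolutely almost simple, together with the elementary but essential fact that an almost simple algebraic subgroup of a product of almost simple groups projects with finite kernel onto at least one factor.
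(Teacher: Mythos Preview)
Your proof is correct and takes a somewhat different route than the paper's. The paper argues the contrapositive: assuming (after relabelling) $\dim q_1>\dim q_2$, it deletes one entry from a diagonalization of $q_1$ to obtain a codimension-one subform $r$ isotropic at a real place, and then argues that the resulting subform subspace $N_r\subset M_1$ cannot lie in $\Q TG(M_2)$ because the simple factors of the associated Lie group are already too large to sit inside $R_{k_2/\Q}(\mathbf{SO}(q_2))$. You instead use the observation $[M_i]\in\Q TG(M_i)=\Q TG(M_{3-i})$ directly, obtaining that each $\mathbf{G}_i'$ is $\Q$-isogenous to a $\Q$-subgroup of $\mathbf{G}_{3-i}'$, and then compare absolutely almost simple factors over $\overline{\Q}$ symmetrically.

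Both arguments ultimately rest on the same elementary fact---an almost simple subgroup of a product of almost simple groups projects with finite kernel onto at least one factor---but your version avoids constructing an auxiliary subform and handles the borderline case $\dim q_1=\dim q_2+1$ (where the paper's phrase ``cannot be a proper totally geodesic subspace of $M_2$'' leaves the possibility $N_r\sim M_2$ to the reader) without any extra work. Your explicit treatment of $m_i=4$, where $\mathbf{SO}_4$ is not absolutely almost simple and the relevant factor is $\mathbf{SL}_2$, is also a point the paper's brief argument leaves implicit. The paper's approach, on the other hand, foreshadows the subform-subspace machinery used throughout the rest of the argument.
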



\begin{proof}
We shall prove the contrapositive.  
If $\dim q_1\ne \dim q_2$, then (potentially after relabeling), $\dim q_1> \dim q_2$.  
Let $v_0\in V_{k_1}$ be a real place where $q_1\otimes k_{1,v_0}$ is isotropic.  
By deleting one entry in a diagonal representation of $q_1$ we have a $(\dim q_1 -1)$-dimensional form $r$ that is isotropic at $v_0$.
Let $\mathbf{H}:=\mathbf{SO}(r)$, $H$ denote the noncompact factors of $R_{k_1/\Q}(\mathbf{H})$, and $\mathbf{G}_2:=\mathbf{SO}(q_2)$.
Considering the dimensions of the simple factors of $H$,  it follows that $H$ cannot be $\R$-isogenous to a proper subgroup of $R_{k_2/\Q}(\mathbf{G}_2)$.
Since $\dim r\ge 3$,  $r$  gives rise to a nonflat finite volume totally geodesic subspace $N$ of $M_1$ that cannot be a proper totally geodesic subspace of $M_2$.   
The result then follows.
\end{proof}

\begin{proof}[Proof of Theorem A]
Let $q_1$ and $q_2$ be quadratic forms over $k_1$ and $k_2$, giving rise to $M_1$ and $M_2$, respectively.
Since $\Q TG(M_1)=\Q TG (M_2)$, Proposition \ref{samedim} implies $\dim q_1= \dim q_2=:m$.
Let $r$ be an $(m-1)$-dimensional quadratic $k_1$-subform of $q_1$ that is isotropic at the real place where $q_1$ is isotropic and let $\mathbf{H}_1:=\mathbf{SO}(r)$.  
By Proposition \ref{commimpliesisothm}, $R_{k_1/\Q}(\mathbf{H}_1)$ is $\Q$-isogenous to a $\Q$-subgroup of $R_{k_2/\Q}(\mathbf{SO}(q_2))$.
Observe that 
$$\dim \mathbf{SO}(q_2)=\frac{m(m-1)}{2}=\frac{(m-1)(m-2)}{2}+m<2\left(\frac{(m-1)(m-2)}{2}\right)=2\dim \mathbf{SO}(r).$$
Since $M_1$ and $M_2$ are $\R$-simple, we may apply Proposition \ref{fielddefcontain} and the result follows.
\end{proof}

\begin{rem}
By examining the proof, for $k(M_1)$ and $k(M_2)$ to be isomorphic, it is sufficient that $M_1$ and $M_2$ both contain, up to commensurability, the same  totally geodesic subspace coming from a codimension one quadratic form.   
\end{rem}


\section{Technical Results: Construction of Subforms of Quadratic Forms}\label{sectionconstructions}

This section is dedicated to showing that over number fields, nonisogroupic forms cannot have the same isogroupy classes of subforms.  
Toward these ends, we construct proper quadratic subforms with very specific local properties that exploit the exceptional restrictions on the Hasse invariant in dimensions 1 and 2. 
The results of this section heavily rely upon the following fundamental lemma.

\begin{lem}[Square Existence Lemma]\label{squareexistence}
Let 
\begin{enumerate}
\item $k$ be a number field,
\item $S$ be a finite set of places of $k$, and
\item for each $v\in S$, let $\alpha_v$ be a square class in $k_v^\times$. 
\end{enumerate}
Then there exists an $s\in k^\times$ for which $s\in \alpha_v$ for all $v\in S$. 
\end{lem}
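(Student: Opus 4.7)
The plan is to reduce this to weak approximation in $k$, using the fact that the squaring subgroup $(k_v^\times)^2$ is open in $k_v^\times$ at every place. More precisely, I will first choose a representative $a_v \in \alpha_v$ for each $v \in S$, then use weak approximation to produce a global $s \in k^\times$ which is close enough to $a_v$ at each $v \in S$ so that $s$ and $a_v$ differ by a square in $k_v^\times$.

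The first step is to record that for every place $v$ of $k$, the subgroup $(k_v^\times)^2 \subset k_v^\times$ is open (and of finite index). At a complex place this is trivial since every element is a square. At a real place, $(\R^\times)^2 = \R_{>0}$ is open. At a nonarchimedean place this follows from Hensel's lemma applied to $X^2 - u$ for units $u$ sufficiently close to a square, which shows that a neighborhood of $1$ in $\mathcal{O}_v^\times$ consists entirely of squares; combined with the two (or more, in residue characteristic $2$) square classes of uniformizers, this gives openness of $(k_v^\times)^2$. Consequently, for each $v \in S$ and each $a_v \in \alpha_v$, the coset $\alpha_v = a_v \cdot (k_v^\times)^2$ is an open neighborhood of $a_v$ in $k_v^\times$.

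The second step is to invoke weak approximation: the diagonal embedding $k \hookrightarrow \prod_{v \in S} k_v$ has dense image. Pick representatives $a_v \in \alpha_v$ for each $v \in S$, and let $U_v \subset k_v^\times$ be an open neighborhood of $a_v$ contained in $\alpha_v$ (using Step 1). By weak approximation, there exists $s \in k$ whose image in $k_v$ lies in $U_v$ for every $v \in S$. In particular $s \neq 0$, so $s \in k^\times$, and by construction $s \in \alpha_v$ for all $v \in S$, which is what was required.

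There is no real obstacle to this argument; the only thing to be careful about is the openness of $(k_v^\times)^2$ at the dyadic places, where the structure of $k_v^\times / (k_v^\times)^2$ is slightly more elaborate but still finite, so openness still holds. One can also state the lemma in a form that makes the openness manifest by passing through the finite quotient $k_v^\times/(k_v^\times)^2$ and appealing to the topological version of weak approximation in $(k/k^{\times 2})$-style exact sequences, but the direct argument above is the cleanest.
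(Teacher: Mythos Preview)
Your proof is correct, and it is substantially more elementary than the argument given in the paper. The paper proceeds via class field theory: it associates to each square class $\alpha_v$ the corresponding quadratic (or trivial) local extension $L_v/k_v$ and its order-$2$ character $\chi_v$, then invokes the Grunwald--Wang theorem to globalize to a character $\chi$ of order $2$, hence a quadratic extension $L=k(\sqrt{s})$ with the prescribed local behaviour; this forces $s\in\alpha_v$ for all $v\in S$. Your approach bypasses all of this by observing that $(k_v^\times)^2$ is open in $k_v^\times$ at every place and then applying weak approximation directly. The advantage of your route is that it uses only the openness of squares (Hensel's lemma at nonarchimedean places) and the density of $k$ in $\prod_{v\in S}k_v$, which are much lighter tools than Grunwald--Wang. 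The paper's approach, by contrast, would generalize more readily to higher power classes where the Grunwald--Wang machinery is genuinely needed, but for square classes your argument is the cleaner one.
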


\begin{proof}
Each nontrivial (resp. trivial) square class $\alpha_v$ corresponds to a unique quadratic (resp. trivial ) extension $L_v/k_v$.  By local class field theory, this corresponds to a character $\chi_v$ of $k_v^\times$ of order 2 (resp.~order 1).  
By the Grunwald--Wang Theorem \cite[Chp VIII Thm 2.4]{M}, there exists a character $\chi$ of $GL_1(\mathbb{A}_k)/GL_1(k)$ whose restriction to $k_v^\times$ is $\chi_v$, for all $v\in S$.  
Since $n=2$ and $k[\zeta_2]= k$ is trivially cyclic, we may choose  $\chi$ to have order 2.  
By global class field theory, this gives a quadratic extension $L/k$ where $L=k(s)$.  
Then $s\in \alpha_v$ for all $v\in S$.
\end{proof}

\subsection*{Constructing Nonrepresentable Subforms 1: Nonisogroupic at a real place}\label{sectionconstructionsrep1}
Let $k$ be a number field and let $q$ be an $m$-dimension quadratic form over $k$.  
If $v\in V_k$ is a real place, then we shall say $q$ is \textbf{ordered at $v$} if the signature $(m_{+}^{(v)},m_-^{(v)})$ of $q\otimes k_v$  satisfies $m\ge m_{+}^{(v)}\ge m_-^{(v)}\ge 0$.  We call $q$ \textbf{ordered} if it is ordered at all real places.  

\begin{lem}\label{ordered}
Let $k$ be a number field and $q$ a quadratic form over $k$.  
Then there exists an $a\in k^\times$ so that $aq$ is ordered.
\end{lem}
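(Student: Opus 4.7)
The plan is to reduce the problem to a sign-prescription problem at real places, and then invoke the Square Existence Lemma. At a real place $v$, the square classes of $k_v^\times = \R^\times$ are exactly $\{$positive$\}$ and $\{$negative$\}$, so prescribing a square class at a real place is the same as prescribing the sign of $a$ at that place. Also, multiplying a diagonal representation $\langle a_1,\ldots,a_m\rangle$ of $q\otimes k_v$ by a positive scalar preserves the signature, while multiplying by a negative scalar swaps $m_+^{(v)}$ and $m_-^{(v)}$. So we simply need to choose $a$ to be negative at exactly the real places where $q$ fails to be ordered.

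Concretely, let $S$ be the (finite) set of real places of $k$, and for each $v\in S$ with signature $(m_+^{(v)},m_-^{(v)})$, let $\alpha_v$ be the positive square class of $\R^\times$ if $m_+^{(v)}\ge m_-^{(v)}$, and the negative square class otherwise. Applying Lemma \ref{squareexistence} to the data $(S,\{\alpha_v\}_{v\in S})$ produces an element $a\in k^\times$ with $a\in\alpha_v$ for every real $v$. By construction, the signature of $aq\otimes k_v$ is either $(m_+^{(v)},m_-^{(v)})$ or $(m_-^{(v)},m_+^{(v)})$, and the sign of $a$ at $v$ was chosen precisely so that the larger of the two appears first. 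Hence $aq$ is ordered at every real place, i.e.\ $aq$ is ordered.

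There is no real obstacle here; the Square Existence Lemma is tailor-made for this kind of prescription, and the only observation needed is the identification of square classes at a real place with signs. (One could equally well invoke weak approximation at the archimedean places, but the lemma has just been stated and used in the same spirit, so it is the natural tool.)
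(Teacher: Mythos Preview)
Your proof is correct and essentially identical to the paper's: both define $S$ to be the set of real places, prescribe $\alpha_v$ to be the positive or negative square class according to whether $q$ is already ordered at $v$, and invoke the Square Existence Lemma to produce $a$. The only difference is notational (the paper writes $(k_v^\times)^2$ and $-(k_v^\times)^2$ and names the set $S_0$ of bad places explicitly).
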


\begin{proof}
Let $S\subset V_k$ denote the set of all real places and let $S_0\subset S$ denote the set of all real places where $q$ is not ordered.  For each $v\in S$, let $$\alpha_v=\begin{cases}-(k_v^\times)^2& \mbox{if } v\in S_0,\\(k_v^\times)^2& \mbox{if } v\not\in S_0.\\\end{cases}$$  By Lemma \ref{squareexistence}, there exists $a\in k^\times$ such that $a(k_v^\times)^2=\alpha_v$ for all $v\in S$ and hence $aq$ is ordered.  
\end{proof}

Recall that two quadratic forms over $\R$  are $\R$-isogroupic if and only if they are similar.  

\begin{lem}\label{realrep1}
Let $q_1$ and $q_2$ be nonisometric $m$-dimensional quadratic forms over $\R$ with signatures $(m_1, n_1)$ and $(m_2, n_2)$ respectively such that $m_1>m_2\ge n_2>n_1$.
Then for all $j\in \Z_{\ge 1}$ such that  $$n_1+n_2<j<m$$ there exists an isotropic $j$-dimensional form dividing $q_2$ that is not similar to a form dividing $q_1$.  Furthermore, this form can be realized by deleting $m-j$ entries in a diagonal representation of $q_2$.
\end{lem}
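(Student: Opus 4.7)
The plan is to make an explicit choice of a $j$-dimensional subform of $q_2$, namely the one with signature $(j-n_2,\,n_2)$, and then verify by a direct comparison of signatures that no $j$-dimensional subform of $q_1$ can be similar to it. Over $\R$, two forms of the same dimension are similar if and only if their signatures are either equal or reversed (since scaling by a positive real preserves signature and scaling by a negative real swaps the two coordinates). Moreover, any subform of a form with signature $(M,N)$ obtained by deleting diagonal entries has signature $(a,b)$ with $a\le M$ and $b\le N$. These two observations are the only external facts I need.

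First I would verify that the chosen subform exists, is isotropic, and is realizable in the required way. The hypothesis $j>n_1+n_2\ge n_2$ gives $j-n_2\ge 1$, and $j<m=m_2+n_2$ gives $j-n_2\le m_2-1<m_2$; also $n_2>n_1\ge0$ forces $n_2\ge 1$. Thus the pair $(j-n_2,\,n_2)$ is a legitimate signature with both entries positive, so the corresponding form is isotropic. Starting from a diagonal representation $\langle a_1,\dots,a_{m_2},\,-b_1,\dots,-b_{n_2}\rangle$ of $q_2$, deleting $m_2-(j-n_2)=m-j$ of the positive entries produces a subform of $q_2$ with this signature, as required.

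Next I would rule out similarity with any $j$-dimensional subform of $q_1$. Such a subform has signature $(a',b')$ with $a'\le m_1$ and, decisively, $b'\le n_1$. For $(a',b')=(j-n_2,\,n_2)$ one would need $n_2\le n_1$, contradicting $n_2>n_1$; for $(a',b')=(n_2,\,j-n_2)$ (the reversed signature, to cover similarity via a negative scalar) one would need $j-n_2\le n_1$, i.e. $j\le n_1+n_2$, contradicting $j>n_1+n_2$. The argument is almost entirely bookkeeping with signatures, and no step is really an obstacle; the delicate point is simply to track that the two strict inequalities $n_2>n_1$ and $j>n_1+n_2$ from the hypotheses are exactly the two inequalities needed to defeat the two possible similarities. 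Note that the hypothesis $m_1>m_2$ is equivalent (via $m_1+n_1=m_2+n_2=m$) to $n_1<n_2$ and is used in this form.
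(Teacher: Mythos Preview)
Your proof is correct and follows essentially the same approach as the paper: you choose the subform of $q_2$ with signature $(j-n_2,\,n_2)$, verify it is isotropic and obtained by deleting $m-j$ positive diagonal entries, and then use the two inequalities $n_2>n_1$ and $j-n_2>n_1$ to rule out both possible similar subforms of $q_1$. Your presentation is slightly more explicit in stating the characterization of similarity over $\R$ and the signature constraints on subforms, but the content is the same.
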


\begin{proof}
The idea of the proof is that we pick a subform $r$ of $q_2$ such that neither $r$ nor $-r$ divides $q_1$.   
We may represent  
$$q_1=\langle \underbrace{a_1,\ldots , a_{m_1}}_{>0}, \underbrace{a_{m_1+1},\ldots, a_m}_{<0}\rangle \quad \mbox{and} \quad q_2=\langle \underbrace{b_1,\ldots ,  b_{m_2}}_{>0}, \underbrace{b_{m_2+1},\ldots, b_m}_{<0}\rangle,$$with  $a_i, b_j\in \R$.
The desired subform may be obtained by deleting the first $m-j$ entries of $q_2$, namely  let
$$r:=\langle b_{m-j+1}, b_{m-j+2}, \ldots b_{m-1}, b_{m}\rangle.$$
By construction, $r$ has signature $(j-n_2, n_2)$ from which we can see that $r$ is always isotropic and both
\begin{itemize}
\item $j-n_2>n_1+n_2-n_2= n_1$, and 
\item $n_2>n_1.$
\end{itemize}
Hence neither $r$ nor $-r$ is a subform of $q_1$.
\end{proof}

\begin{rem}
The more isotropic both forms are, the fewer subforms arise from this construction.  In particular, there are no subforms precisely when $m$ is even and the two forms have signatures $$\left(\frac{m}{2}-1, \frac{m}{2}+1\right)  \quad \mbox{and} \quad  \left(\frac{m}{2}, \frac{m}{2}\right).$$
\end{rem}
%

\begin{lem}\label{realrep2}
Let $q_1$ and $q_2$ be nonisometric $m$-dimensional quadratic forms over $\R$ with signatures $(m_1, n_1)$ and $(m_2, n_2)$ respectively such that $m_1>m_2\ge n_2>n_1>0$.
Then for all $j\in \Z_{\ge 1}$ such that  $$m_1<j<m$$ there exists an isotropic $j$-dimensional form dividing $q_1$ that is not similar to a form dividing $q_2$.  
Furthermore, this form can be realized by deleting $m-j$ entries in a diagonal representation of  $q_1$.
\end{lem}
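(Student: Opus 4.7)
The statement is essentially the mirror of Lemma \ref{realrep1}, now cutting down $q_1$ (the form with the larger positive component) instead of $q_2$. My plan is to construct the required subform $r$ of $q_1$ by an explicit deletion in a diagonal representation, and then rule out $r$ being similar to any subform of $q_2$ by tracking signatures, exactly as in the previous lemma. Over $\R$ two regular forms are similar if and only if they are isometric or one is isometric to the negative of the other (scaling by a positive scalar preserves signature; scaling by a negative one swaps the two components), so it suffices to exclude both cases.

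First I would write $q_1=\langle a_1,\dots,a_{m_1},a_{m_1+1},\dots,a_m\rangle$ with $a_1,\dots,a_{m_1}>0$ and $a_{m_1+1},\dots,a_m<0$, and let
\[
r:=\langle a_1,\dots,a_{m_1},a_{m_1+(m-j)+1},\dots,a_m\rangle,
\]
obtained by deleting the first $m-j$ negative entries. This is legal because the hypothesis $j>m_1$ gives $m-j<n_1$, so there really are that many negative entries to drop. By construction $r$ has signature $(m_1,j-m_1)$. The positive part has $m_1\ge m_2\ge n_2>0$ entries, and the negative part has $j-m_1\ge 1$ entries (this is where the assumption $n_1>0$ together with $j>m_1$ is used to make the range of $j$ nonempty and $r$ actually isotropic), so $r$ is isotropic of the required dimension.

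Next I would argue $r$ is not similar to any $j$-dimensional subform of $q_2$. Any such subform has signature $(a,b)$ with $a\le m_2$, $b\le n_2$, $a+b=j$. For $r$ itself to match, we would need $m_1\le m_2$, contradicting $m_1>m_2$. For $-r$, which has signature $(j-m_1,m_1)$, to match, we would need $m_1\le n_2$, contradicting $m_1>m_2\ge n_2$. Hence neither $r$ nor $-r$ is isometric to a subform of $q_2$, so $r$ is not similar over $\R$ to any subform of $q_2$, as claimed.

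There is no real obstacle here; the proof is entirely parallel to Lemma \ref{realrep1}, with the roles of $q_1$ and $q_2$ swapped and the deletion performed in the negative block rather than the positive block. The only subtlety worth stating cleanly is the equivalence over $\R$ between similarity and isometry-up-to-sign, which is what allows signature-only bookkeeping to rule out all possible subforms of $q_2$ at once.
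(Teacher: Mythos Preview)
Your proof is correct and essentially identical to the paper's: both delete $m-j$ negative entries from a diagonal representation of $q_1$ to obtain a subform of signature $(m_1,\,j-m_1)$, then use $m_1>m_2$ and $m_1>n_2$ to rule out both $r$ and $-r$ as subforms of $q_2$. The paper phrases the deletion as removing the \emph{last} $m-j$ entries (equivalently yielding $r=\langle a_1,\dots,a_j\rangle$), but over $\R$ this gives the same isometry class as your choice, and the signature argument is the same.
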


\begin{proof}
Again we may represent  
$$q_1=\langle \underbrace{a_1,\ldots , a_{m_1}}_{>0}, \underbrace{a_{m_1+1},\ldots, a_m}_{<0}\rangle \quad \mbox{and} \quad q_2=\langle \underbrace{b_1,\ldots ,  b_{m_2}}_{>0}, \underbrace{b_{m_2+1},\ldots, b_m}_{<0}\rangle,$$with  $a_i, b_j\in \R$.
This time the desired subform may be obtained by deleting the last $m-j$ entries of $q_1$, namely  let
$$r:=\langle a_{1}, a_2, \ldots ,  a_{j} \rangle.$$
By construction, $r$ has signature $(m_1, n_1-m+j)$ from which we can see that $r$ is always isotropic and by our initial assumptions, both  $m_1>m_2$ and $m_1> n_2$.
Hence neither $r$ nor $-r$ is a subform of $q_2$.
\end{proof}

\begin{rem}
The more anisotropic $q_1$ is, the fewer subforms arise from this construction.  In particular, there are no subforms arising from this construction precisely when $m_1= m-1$.  
\end{rem}

Combining Lemma \ref{realrep1} and Lemma \ref{realrep2} we obtain the following corollary.

\begin{cor}\label{realrepcor}
Let $q_1$ and $q_2$ be nonisogroupic quadratic forms over $\R$ of dimension $m\ge 5$.  
Then there exists an isotropic $(m-1)$-dimensional subform of one which is not similar to a subform of the other. Furthermore, this form can be realized by deleting one entry in a diagonal representation of either $q_1$ or $q_2$.
\end{cor}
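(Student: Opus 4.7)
The plan is to reduce directly to Lemmas \ref{realrep1} and \ref{realrep2}, both of which already output isotropic subforms obtained by deleting a single diagonal entry. The first step will be a normalization. Over $\R$, two forms of the same dimension are similar if and only if their unordered signatures agree (both the $\R$-rank of $\mathbf{SO}(q)$ and its real dimension depend only on this unordered pair), so isogroupy coincides with similarity over $\R$; moreover multiplying a form by $-1$ preserves both its isogroupy class and the similarity classes of all of its subforms. I may therefore assume $q_1$ and $q_2$ are both ordered, so their signatures $(m_i,n_i)$ satisfy $m_i \ge n_i$. Their non-isogroupy, together with $m_1 + n_1 = m_2 + n_2 = m$ and a relabelling making $m_1 \ge m_2$, will then force the strict chain
\[
m_1 > m_2 \ge n_2 > n_1 \ge 0,
\]
which is precisely the hypothesis shared by the two lemmas.

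The second step will be a case split on $n_1$, applying one of the two lemmas with $j = m-1$. If $n_1 \ge 2$, then $m_1 \le m-2 < m-1$, so Lemma \ref{realrep2} supplies an isotropic $(m-1)$-dimensional subform of $q_1$ with the required property, cut out of a diagonal representation of $q_1$ by deleting one entry. If instead $n_1 \le 1$, the ordering of $q_2$ gives $n_2 \le m/2$, whence $n_1 + n_2 \le 1 + m/2$; the inequality $1 + m/2 < m-1$ (which is where the hypothesis $m \ge 5$ enters) then allows Lemma \ref{realrep1} to produce the analogous subform of $q_2$. In either case the subform is obtained by deleting one entry of a diagonal representation, so the \emph{furthermore} clause is automatic.

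The main subtlety, rather than a serious obstacle, will be to confirm that the above case split really exhausts every pair of nonisogroupic ordered signatures once $m \ge 5$. Borderline configurations such as $(4,1)$ versus $(3,3)$ at $m=6$, or a balanced pair $(m/2,m/2)$ against $(m/2+1,m/2-1)$ in even dimension, have to be routed through exactly the right lemma and are precisely what pin down the numerical bound $1 + m/2 < m - 1$. These examples also show why neither Lemma \ref{realrep1} nor Lemma \ref{realrep2} alone suffices: both are genuinely needed, and they fit together cleanly only under the assumption $m \ge 5$.
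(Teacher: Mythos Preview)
Your proof is correct and follows the same approach as the paper, which simply states that the corollary is obtained by combining Lemmas \ref{realrep1} and \ref{realrep2}. You have supplied the details the paper omits: the normalization to ordered forms, the derivation of the chain $m_1>m_2\ge n_2>n_1\ge 0$, and the case split at $n_1\ge 2$ versus $n_1\le 1$ that routes each configuration to the correct lemma with $j=m-1$.
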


The bound $m\ge 5$ is tight since neither Lemma \ref{realrep1} nor Lemma \ref{realrep2} may be applied to the nonisometric 4-dimensional real forms $q_1$ and $q_2$ with signatures $(3,1)$ and $(2,2)$ respectively.  
It is not hard to see that every isotropic $\R$-subform of one is isogroupic to an $\R$-subform of the other.  

\begin{prop}\label{realrep}
Let $k$ be a totally real number field and let $m\ge 5$.
Suppose that  $q_1$ and $q_2$ are ordered $m$-dimensional quadratic forms over $k$ that are isotropic at precisely one real place, $v_1$ and $v_2$, respectively, and $q_{1,v_1}$ and $q_{2,v_2}$ are not $\R$-isometric.
Then, up to relabelling, there exists an $(m-1)$-dimensional quadratic $k$-subform $r$ of $q_1$ that is not isogroupic to a subform of any twist of $q_2$.   
Furthermore $r$ can be chosen to be isotropic at the real places where $q_1$ is isotropic.
\end{prop}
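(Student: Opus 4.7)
The strategy is to promote the single-place non-similarity given by Corollary~\ref{realrepcor} to a global $k$-level obstruction. The enabling observation is that over $\R$ the isomorphism class of $\mathbf{SO}$ of a nondegenerate form is determined by the signature up to swap; consequently $k$-isogroupic forms are $\R$-similar at every real place of $k$, so an obstruction visible at a single real place suffices. Moreover, for ordered forms ``not $\R$-isometric'' coincides with ``not $\R$-similar,'' so the hypothesis is exactly what Corollary~\ref{realrepcor} demands.

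Apply Corollary~\ref{realrepcor} to the non-$\R$-similar pair $q_1 \otimes k_{v_1}$ and $q_2 \otimes k_{v_2}$. After relabelling if necessary, we obtain an isotropic $(m-1)$-dimensional $\R$-subform $\rho$ of $q_1 \otimes k_{v_1}$, realized by deleting one diagonal entry, which is not $\R$-similar to any subform of $q_2 \otimes k_{v_2}$. Fix a diagonal representation $q_1 = \langle a_1, \ldots, a_m\rangle$ over $k$, reorder so that the signs of $v_1(a_1), \ldots, v_1(a_m)$ agree with the ordered $\R$-representation of $q_1 \otimes k_{v_1}$, and remove the entry corresponding to $\rho$ to form the $k$-subform $r := \langle a_1, \ldots, \widehat{a_i}, \ldots, a_m\rangle$, so that $r \otimes k_{v_1} \cong \rho$ as $\R$-forms. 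Because $k$ is totally real, $q_1$ is ordered, and $q_1$ is anisotropic at every real $v \neq v_1$, the form $q_1 \otimes k_v$ is positive definite at every such $v$, forcing $r \otimes k_v$ to be positive definite (hence anisotropic) there as well; at $v_1$, $r$ is isotropic by construction.

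Finally, suppose for contradiction that some twist $q_2^{\tau}$ admits an $(m-1)$-dimensional subform $r'$ with $\mathbf{SO}(r) \cong_k \mathbf{SO}(r')$. Then $r \otimes k_v$ and $r' \otimes k_v$ are $\R$-similar at every real place $v$. Let $v_\tau$ be the unique real place at which $q_2^{\tau}$ is isotropic, and note that $q_2^{\tau} \otimes k_{v_1}$ is $\R$-isomorphic to $q_2 \otimes k_{v_2}$ when $v_\tau = v_1$ and is positive definite otherwise. If $v_\tau \neq v_1$, then $r' \otimes k_{v_1}$ has signature $(m-1,0)$, while $r \otimes k_{v_1} = \rho$ is isotropic---contradiction. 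If $v_\tau = v_1$, then $r' \otimes k_{v_1}$ is $\R$-isomorphic to a subform of $q_2 \otimes k_{v_2}$, so $\rho$ would be $\R$-similar to such a subform, contradicting the defining property of $\rho$. Hence no such $r'$ exists.

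The main obstacle is handling the full family of twists uniformly, since a twist can move the isotropic place of $q_2$ to any real place in the Galois orbit of $v_2$. The positive-definiteness of $q_1$ at every real place other than $v_1$ is what makes the signature comparison at $v_1$ alone rigid enough to detect the obstruction regardless of where the twist places the isotropic direction.
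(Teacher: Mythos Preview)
Your proof is correct and follows essentially the same approach as the paper's. The paper's two-line argument first replaces $q_2$ by a twist so that $v_1=v_2$ and then invokes Corollary~\ref{realrepcor}; you skip the preliminary twist and instead handle all twists via an explicit case analysis at $v_1$, but the underlying idea---a signature obstruction visible at the unique isotropic real place of $q_1$, combined with positive-definiteness of both forms at every other real place---is identical.
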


\begin{proof}
We may replace $q_2$ with a twist such that $q_1$ and $q_2$ are isotropic at the same real place.  The result then follows from Corollary \ref{realrepcor}.
\end{proof}

\subsection*{Constructing Nonrepresentable Subforms 2: Isogroupic at all real places}\label{sectionconstructionsrep2}
In this section, we look at quadratic forms over $k$ that are isogroupic at all infinite places but are not isogroupic at a finite place.  

\begin{thm}\label{finiterepodd}
Let $k$ be a number field and let $m=2n+1$ for $n\ge2$.  
Suppose that $q_1$ and $q_2$ are nonisometric $m$-dimensional quadratic forms over $k$ such that
\begin{enumerate}
\item  at each infinite place $v$,  $q_{1,v}$ and  $q_{2,v}$ are ordered and isometric, and
\item there is a finite place $v_0\in V_k$ where:
	\begin{enumerate}
		\item $\det_{v_0} q_1 = 1= \det_{v_0} q_2$,
		\item $c_{v_0}(q_1) \ne c_{v_0}(q_2)$.
	\end{enumerate}
\end{enumerate}
Then there exists an $(m-1)$-dimensional quadratic subform $r$ of $q_1$ that is not isogroupic to a subform of $q_2$.   
Furthermore if $q_1$ is isotropic at a real place, then $r$ can be chosen to be isotropic at that real place as well.
\end{thm}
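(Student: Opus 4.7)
The plan is to produce an $(m-1)$-dimensional subform $r$ of $q_1$, realized as the orthogonal complement of some $\langle a\rangle$ with $a \in k^\times$ (so $q_1 \cong r \oplus \langle a\rangle$), such that for every $(m-1)$-dimensional subform $r'$ of $q_2$ (necessarily of the form $q_2 \cong r' \oplus \langle b\rangle$) the local Tits index of $\mathbf{SO}(r)$ at $v_0$ differs from that of $\mathbf{SO}(r')$. Since isogroupic forms must share both dimension and the Tits index of their orthogonal groups at every completion, no such $r'$ can be isogroupic to $r$. For the computation at $v_0$, the product formula \eqref{productformula}, together with the relation $\det r \equiv a\cdot\det q_1 \pmod{(k^\times)^2}$ and the Hilbert-symbol identity $(a,a)=(a,-1)$, yields (using $\det_{v_0} q_1 = 1$) the formula $c_{v_0}(r) = c_{v_0}(q_1)(a,-1)_{v_0}$; analogously $c_{v_0}(r') = c_{v_0}(q_2)(b,-1)_{v_0}$.

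I will choose $a \equiv (-1)^n \pmod{(k_{v_0}^\times)^2}$. Then $\mathrm{disc}_{v_0}(r) = (-1)^n\det_{v_0} r \equiv 1$, so by Table~\ref{tablequadratic} $\mathbf{SO}(r)$ has local index of type ${}^1D$ at $v_0$, with $c_{v_0}(r) = c_{v_0}(q_1)\,(-1,-1)_{v_0}^n$. For a general subform $r' \subset q_2$: if $b \not\equiv (-1)^n$ at $v_0$, then $\mathrm{disc}_{v_0}(r') \not\equiv 1$, placing $\mathbf{SO}(r')$ into type ${}^2D_{n,n-1}$, different from ${}^1D$. If instead $b \equiv (-1)^n$ at $v_0$, both orthogonal groups are of type ${}^1D$, but
\[ c_{v_0}(r) = c_{v_0}(q_1)\,(-1,-1)_{v_0}^n \neq c_{v_0}(q_2)\,(-1,-1)_{v_0}^n = c_{v_0}(r'), \]
since $c_{v_0}(q_1) \neq c_{v_0}(q_2)$ by hypothesis (2)(b), so Table~\ref{tablequadratic} forces the two different local Tits indices ${}^1D_{n,n}$ versus ${}^1D_{n,n-2}$. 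Either way $r$ and $r'$ are not isogroupic.

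To find such an $a$, I will apply the Square Existence Lemma~\ref{squareexistence} to the finite set $S = \{v_0\} \cup V_k^\infty$, prescribing the class of $(-1)^n$ at $v_0$, the class of $+1$ at every real place where $q_1$ is positive definite (which is forced if $q_1$ is to represent $a$ there), and the class of $+1$ at every real place where $q_1$ is isotropic with ordered signature $(p,q)$, $p\ge q\ge 1$, so that $r \otimes k_v$ has signature $(p-1,q)$ and remains isotropic; in particular $r$ will be isotropic at every real place where $q_1$ is. To verify that $r$ is a bona fide subform, apply Hasse--Minkowski: $q_1$ represents $a$ over $k$ iff $q_1 \oplus \langle -a\rangle$ is locally isotropic everywhere. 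At each nonarchimedean place $\dim(q_1 \oplus \langle -a\rangle) = m+1 \geq 6$ forces isotropy, and at archimedean places the sign prescriptions supply local representation.

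The main obstacle is that a single choice of $a$ must defeat the entire family of candidates $r'$ as $b$ ranges over all square classes represented by $q_2$ at $v_0$. The crux is that the choice $a \equiv (-1)^n$ at $v_0$ simultaneously pins $\mathrm{disc}_{v_0}(r) \equiv 1$ (so $\mathbf{SO}(r)$ lands in the ${}^1D$ branch of Table~\ref{tablequadratic} independent of $b$) and propagates the given Hasse-invariant discrepancy $c_{v_0}(q_1) \neq c_{v_0}(q_2)$ directly to a discrepancy between $c_{v_0}(r)$ and $c_{v_0}(r')$ in the sole remaining case $b \equiv (-1)^n$ where both orthogonal groups would otherwise lie in the same ${}^1D$ branch; the cross-branch case ($r'$ of type ${}^2D$) is automatic.
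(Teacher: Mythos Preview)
Your proof is correct and follows essentially the same approach as the paper: both pin down the square class of $\det r$ at $v_0$ to be $(-1)^n$ so that $\mathrm{disc}_{v_0}(r)=1$, which forces any isogroupic competitor $r'\subset q_2$ into the ${}^1D$ branch at $v_0$ and then propagates the hypothesis $c_{v_0}(q_1)\neq c_{v_0}(q_2)$ to a Tits-index discrepancy. The only difference is in packaging the global existence of $r$: the paper assembles $r$ and its complement $t$ place-by-place and checks the product-formula compatibility of Theorem~\ref{localglobal}, whereas you pick the one-dimensional piece $\langle a\rangle$ first via Lemma~\ref{squareexistence} and then invoke Hasse--Minkowski (using that $\dim(q_1\oplus\langle -a\rangle)=m+1\ge 6$ is automatically isotropic nonarchimedeanly) to conclude $q_1$ represents $a$; this bypasses the explicit compatibility check but is otherwise the same construction.
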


\begin{proof}
We construct $r$ locally and then patch it together into a global form using the local-to-global results of Section \ref{sectionquadforms}.
Let $S=\{v_0\} \cup \{\mbox{infinite real places of $k$}\}$
and for each $v\in S$, we pick square classes $\alpha_v\in k_v^\times/(k_v^\times)^2$ as follows:
\begin{itemize}
\item For $v_0$, let $\alpha_{v_0}$ be such that $\alpha_{v_0}=(-1)^n$.
\item For each infinite $v\in S$ let $\alpha_v = \det q_1(k_v^\times)^2 (= \det q_2(k_v^\times)^2)$.
\end{itemize}
By Lemma \ref{squareexistence}, we may choose an $s\in k^\times$ such that $s\in \alpha_v$ for all $v\in S$.

For each finite place $v\in V_k$, define $t_{v}$ and $r_{v}$ to be the quadratic $k_v$-forms with invariants: 
\begin{align*}
&\dim t_{v}=1 && \dim r_{v}=m-1\\
&\det t_{v}=\frac{\det q_1}{s} && \det r_{v}=s\\
&c_v(t_{v})=1 && c_{v} (r_{v})=c_{v} (q_1)\ \left(s,\frac{\det q_{1}}{s}\right)_{v} .
\end{align*}
Such forms exist by Theorem \ref{localexistence} (3).  

For each infinite place $v\in V_k$, define the form $t_{v}$ by:
\begin{align*}
&t_{v}=\left \langle \frac{\det q_1}{s}\right\rangle.
\end{align*}
At each complex place, $t_{v}$ divides $q_1\otimes k_v$.  
At each real place, $q_1$ and $q_2$ are ordered, and hence $t_{v}=\langle 1\rangle$ is a subform of $q_1\otimes k_v$.  
Therefore at each infinite place it makes sense to take the complement of $t_{v}$ in $q_{1}\otimes k_v$ and we may define the form $r_{v}$ by
\begin{align*}
 r_{v}=t_{v}^\perp.
\end{align*}
At each complex place $v\in V_k$, we trivially have $c_v(r_{v})=1=c_v(q_1).$  
For each real $v\in V_k$, let  $(m_+^{(v)}, m_-^{(v)})$ denote the signature of $q_1\otimes k_v$.  
Observe that $r_{v}$ has signature $(m_+^{(v)}-1, m_-^{(v)})$, and hence is isotropic whenever $q_1\otimes k_v$ is isotropic.  
Also note that at an isotropic real place, $$c_v(r_{v})=(-1)^{\frac{m_-^{(v)}\left(m_-^{(v)}-1\right)}{2}}=c_v(q_1).$$

We now show that for each place $v \in V_k$, $t_{v}\oplus r_{v}\cong q_{1} \otimes k_v$.  
This is true by construction at the infinite places.  
When $v$ is finite, we have
$$\dim(t_{v}\oplus r_{v})=1+(n-1)=n=\dim( q_{1} \otimes k_v),$$
$$\det(t_{v}\oplus r_{v})= (\det q_1 / s)s=\det( q_{1} \otimes k_v),$$
and by the product formula for the Hasse invariant
$$c(t_{v}\oplus r_{v})=c(t_{v})c(r_{v})\left(\frac{\det q_1}{s}, s\right)= c_v(q_1)\left(\frac{\det q_1}{s}, s\right)^2=c( q_{1} \otimes k_v).$$
By Theorem \ref{localuniqueness} (3), they are isometric. 

To build a global form, we must check that our forms satisfy the compatibility criteria of Theorem  \ref{localglobal}.  Observe that $c_v(t_{v})=1$ for all $v\in V_k$ and hence $\prod_{v\in V_k} c_v(t_{v})=1$.  
By our choice of $s$, $\left(s,\frac{\det q_{1}}{s}\right)_{v}=1$ at each infinite place, and hence
\begin{align}\label{compatibilityodd}
\prod_{v\in V_k} c_v(r_{v})&=\left(\prod_{v\in V_k\ \mathrm{ finite}} c_v(r_{v})\right)\times \left(\prod_{v\in V_k\ \mathrm{ real}} c_v(r_{v})\right) \times \left(\prod_{v\in V_k\ \mathrm{ complex}} c_v(r_{v})\right)\notag\\
&=\left(\prod_{v\in V_k\ \mathrm{ finite}}c_{v} (q_1)\ \left(s,\frac{\det q_{1}}{s}\right)_{v}\right)\times \left(\prod_{v\in V_k\ \mathrm{ real}} c_v(q_1)\right) \times \left(\prod_{v\in V_k\ \mathrm{ complex}} c_v(q_{1})\right)\notag\\
&=\prod_{v\in V_k} c_v(q_{1}) \times \prod_{v\in V_k} \left(s,\frac{\det q_{1}}{s}\right)_{v}\notag\\
&=1.
\end{align}
The final product is trivial because both the Hasse invariant and the Hilbert symbol of global objects satisfy the product formula.

By Theorem \ref{localglobal}, there exist quadratic forms $t$ and $r$ over $k$ such that for all $v\in V_k$, $t\otimes k_v\cong t_{v}$ and $r\otimes k_v\cong r_{v}$.  
Furthermore, for each $v \in V_k$, we have shown that $t_{v}\oplus r_{v}\cong q_{1} \otimes k_v$ so by Theorem \ref{localglobaluniqueness} we conclude $t\oplus r \cong q_1$, and hence $r$ is a subform of $q_1$.

Suppose that $r'$ is isogroupic to $r$.
Since $\mathbf{H}:=\mathbf{SO}(r)$ is a group of type $D_n$ over $k$, it determines the following invariants of $r'$:
\begin{enumerate}
\item $\dim (r')=2n = \dim(r)$.
\item $\mathrm{disc}_{v}( r')=1$ at precisely the places $v\in V_k$ where $\mathbf{H}\otimes k_v$ is a group of inner type (i.e., the $*$-action is trivial).  This means that  $\mathrm{disc}_{v}( r')=1$ if and only if  $\mathrm{disc}_{v}( r)=1$, or in other words, at the places where $\mathrm{disc}_{v}( r)=1$, then $\det r'=\det r$.
\item $c_v(r')=c_v(r)$ at each place $v$ where  $\mathrm{disc}_v(r)=1$  (see equation \eqref{formula2}).
\end{enumerate}

We now show that 
no subform of $q_2$ is isogroupic to $r$.
Suppose that $r'$ is isogroupic to $r$ and there exists some form $t'$ such that $r'\oplus t'\cong q_2$.
It immediately follows that $\dim t'=1$, $\det t'= \frac{\det q_2}{\det r'}$, and, by the exceptional restriction, $c(t')=1$.  
Our choice of $s$ implies:
$$\mathrm{disc}\, r_{v_0}=(-1)^n\det r_{v_0}= (-1)^{2n}=1,$$
hence $\det r_{v_0}= \det r'_{v_0}$ and $c(r_{v_0})= c (r'_{v_0})$.
Therefore:
\begin{align*}
c(q_{2,v_0})	&=c(r_{v_0}'\oplus t'_{v_0}) \\
			&= c(r'_{v_0})c(t'_{v_0})\left(\det r'_{v_0}, \frac{\det q_{2,v_0}}{\det r'_{v_0}}\right)\\
			&= c(r_{v_0})\left(\det r_{1,v_0}, \frac{\det q_{2,v_0}}{\det r_{v_0}}\right)\\
			&=\left(c(q_{1,v_0})\left(\det r_{v_0}, \frac{\det q_{1,v_0}}{\det r_{v_0}}\right)\right)\left(\det r_{v_0}, \frac{\det q_{2,v_0}}{\det r_{v_0}}\right)\\
			&=c(q_{1,v_0})\left(\det r_{1,v_0}, \frac{\det q_{1,v_0} \det q_{2,v_0}}{(\det r_{1,v_0})^2}\right)\\
			&=c(q_{1,v_0})\left(\det r_{1,v_0}, 1\right)\\
			&=c(q_{1,v_0}).
\end{align*}
This contradicts our initial assumption that $c(q_{1,(j)})\ne c(q_{2,v_0})$ and the result follows.
\end{proof}

\begin{ex}\label{hyerbolic5}
Consider the following 5-dimensional quadratic forms over $\Q$:
$$q_1=\langle  1,1,1, 1, -5\rangle\qquad \mbox{and}  \qquad q_2=\langle 1, 1, 3, 3, -5\rangle. $$
Observe that $\det q_1=-5=\det q_2$,  which in $\Q_3$ is a square.  Furthermore, a quick computation shows $c_3(q_1)=1$ and $c_3(q_2)=-1$.  
By Theorem \ref{finiterepodd}, there exists a 4-dimensional quadratic form $r\subset q_1$ so that  $\mathbf{H}:=\mathbf{SO}(r)\subset \mathbf{SO}(q_1)$ but $\mathbf{H}$ is not $\Q$-isomorphic to a subgroup of $\mathbf{SO}(q_2)$.
It is not hard to check that $r=\langle1,1, 1, -5\rangle$ is such a form.
\end{ex}


\begin{lem}\label{lem:evensplitsubforms}
Let $k_v$ be a nonarchimedian local field, $q$ be a $2n$-dimensional quadratic form over $k_v$, and $r$ be a codimension one subform of $q$.  
\begin{enumerate}
\item If $\mathbf{SO}(q)$ is of type ${}^1D_{n,n}^{(1)}$, then $\mathbf{SO}(r)$ is of  type $B_{n-1,n-1}$.  
\item If $\mathbf{SO}(q)$ is of type ${}^1D_{n,n-2}^{(1)}$, then $\mathbf{SO}(r)$ is of  type $B_{n-1,n-2}$.  
\end{enumerate}
\end{lem}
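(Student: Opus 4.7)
The plan is to deduce the Tits index of $\mathbf{SO}(r)$ by computing the Witt index of $r$, using Witt cancellation and the fact that over a nonarchimedean local field the anisotropic kernel of any quadratic form has dimension at most $4$.

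First I would recall the classification of possible indices. Since $r$ has odd dimension $2n-1$, its anisotropic kernel has odd dimension at most $3$ (any $(2n-1)$-dimensional form is isotropic once $2n-1 \ge 5$, and by general local theory the anisotropic kernel dimension is at most $4$, hence at most $3$ in the odd case). Therefore the Witt index of $r$ is either $n-1$ or $n-2$, corresponding to the only two possible Tits indices $B_{n-1,n-1}$ and $B_{n-1,n-2}$. Thus it suffices to decide which Witt index occurs. Write $q \cong r \oplus \langle a \rangle$ for $a = \det q / \det r \in k_v^\times$, and let $H := \langle 1, -1\rangle$ denote the hyperbolic plane.

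For part (1), $\mathbf{SO}(q)$ is of type ${}^1D_{n,n}^{(1)}$, so $q \cong nH$ is split. In the Witt ring, cancellation gives $[r] = [q] - [\langle a \rangle] = -[\langle a\rangle] = [\langle -a\rangle]$, hence $r \cong \langle -a\rangle \oplus (n-1)H$. The anisotropic kernel of $r$ is the $1$-dimensional form $\langle -a\rangle$, so the Witt index of $r$ is $n-1$ and $\mathbf{SO}(r)$ is of type $B_{n-1,n-1}$.

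For part (2), $\mathbf{SO}(q)$ is of type ${}^1D_{n,n-2}^{(1)}$, so by the classification in Section \ref{sectiontits} we have $q \cong (n-2)H \oplus q_0$ with $q_0$ the (unique up to isometry) anisotropic $4$-dimensional $k_v$-form; in particular the Witt index of $q$ is exactly $n-2$ and its anisotropic kernel has dimension $4$. Suppose for contradiction that $r$ has Witt index $n-1$, so that $r \cong \langle b \rangle \oplus (n-1)H$ for some $b \in k_v^\times$. Then
\begin{equation*}
q \cong r \oplus \langle a\rangle \cong \langle b, a\rangle \oplus (n-1)H,
\end{equation*}
which would give $q$ an anisotropic kernel of dimension at most $2$, contradicting $\dim q_0 = 4$. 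Hence the Witt index of $r$ equals $n-2$ and $\mathbf{SO}(r)$ is of type $B_{n-1,n-2}$.

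There is no real obstacle here; the only subtlety is invoking Witt cancellation in part (1) and ruling out the split possibility for $r$ in part (2) via the dimension of the anisotropic kernel of $q$. Both steps rely only on standard local quadratic form theory together with the dictionary in Table \ref{tablequadratic}.
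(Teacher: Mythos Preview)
Your proof is correct and takes a genuinely different route from the paper's. The paper argues by a direct Hasse-invariant computation: writing $s=\det r$ and using the product formula $c(q)=c(r)\,(s,\det q/s)$ together with the known value of $c(q)$ from Table~\ref{tablequadratic}, it computes $c(r)$ explicitly and then matches it against formula~\eqref{formula1} to read off the index of $\mathbf{SO}(r)$. Your argument bypasses the Hasse invariant entirely, instead tracking the Witt index via Witt cancellation and the bound $\dim(\text{anisotropic kernel})\le 4$ over a nonarchimedean local field. This is more conceptual and arguably more transparent: part~(i) becomes a one-line cancellation, and part~(ii) is a clean contradiction on the dimension of the anisotropic kernel. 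The paper's computation, on the other hand, fits more seamlessly with the surrounding material in Section~\ref{sectiontits}, where the dictionary between classical invariants and Tits indices is the organizing principle; it also makes the ``insert negative signs'' passage from (i) to (ii) literal. One small remark: your parenthetical about isotropy when $2n-1\ge 5$ is unnecessary and slightly misleading for small $n$; the only fact you need is that odd-dimensional anisotropic forms over $k_v$ have dimension at most $3$, which already forces the Witt index of $r$ to be $n-1$ or $n-2$ for all $n\ge 2$.
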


\begin{proof}
We show $(i)$.   
Let $s=\det r$.
A direct computation yields
\begin{align*}
c(r)			&=c(q)\left(s,\frac{\det q}{s}\right)  \\
			&= (-1,-1)^{\frac{n(n-1)}{2}}(s, -\det q)\\
			&= (-1,-1)^{\frac{(n-4)(n-1)}{2}}(s, -(-1)^n)\\
			&= (-1,-1)^{\frac{(n-1)(n-4)}{2}}(-1,s)^{n-1}.
\end{align*}
The result follows from equation \eqref{formula1}.  
By inserting negative signs, $(ii)$ follows analogously.
\end{proof}

\begin{thm}\label{finiterepeven1}
Let $k$ be a number field and let $m=2n\ge 4$.
Suppose that  $q_1$ and $q_2$ are nonisometric $m$-dimensional quadratic forms over $k$ such that
\begin{enumerate}
\item at each infinite place $v$,  $q_{1,v}$ and  $q_{2,v}$ are ordered and isometric, and
\item there is a finite place $v_0\in V_k$ where 
	\begin{enumerate}
		\item $\mathrm{disc}_{v_0} (q_1)=1= \mathrm{disc}_{v_0} (q_{2})$, and
		\item $c_{v_0}(q_1)=(-1,-1)_{v_0}^{\frac{n(n-1)}{2}} \ne -(-1,-1)_{v_0}^{\frac{n(n-1)}{2}}=c_{v_0}(q_{2})$.
	\end{enumerate}
\end{enumerate}
Then there exists an $(m-1)$-dimensional quadratic  subform $r$ of $q_1$  
that is not isogroupic to a subform of $q_2$.
Furthermore if the $q_1$ is isotropic at a real place, then $r$ can be chosen to be isotropic at that real place as well.
\end{thm}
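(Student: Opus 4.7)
The plan is to realize $r$ as a codimension-one subform of $q_1$, use Lemma \ref{lem:evensplitsubforms} to pin down the Tits index of $\mathbf{SO}(r)$ at $v_0$, and then argue that every codimension-one subform of $q_2$ is forced to have an incompatible local index at $v_0$. Since isogroupy is preserved under base change and isogroupic forms have equal dimension, this will prevent any subform of $q_2$ from being isogroupic to $r$.

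First I would construct $r$. Applying the Square Existence Lemma (Lemma \ref{squareexistence}), I choose $s \in k^\times$ lying in the positive square class at each real place where $q_1$ is isotropic and in the square class represented by $q_1$ at each anisotropic real place. Since $q_1$ is ordered and $m \ge 4$, at an isotropic real place $v$ the signature $(m_+^{(v)}, m_-^{(v)})$ satisfies $m_+^{(v)} \ge 2$, so the orthogonal complement of $\langle s\rangle$ in $q_{1,v}$ still has positive indices of both signs and remains isotropic. With $s$ also chosen to be locally represented by $q_1$ at the finite places (automatic from universality of dimension-$\ge 5$ local forms when $m \ge 5$, and imposed as additional conditions in the Square Existence Lemma when $m = 4$), Hasse--Minkowski guarantees that $q_1$ globally represents $s$. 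Hence $q_1 \cong \langle s \rangle \oplus r$ for an $(m-1)$-dimensional quadratic $k$-form $r$, isotropic at all real places where $q_1$ is.

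Second, I suppose toward contradiction that $r'$ is a quadratic $k$-subform of $q_2$ with $\mathbf{SO}(r) \cong \mathbf{SO}(r')$ as $k$-groups. Then $\dim r' = \dim r = 2n - 1$, so $r'$ is a codimension-one subform of $q_2$. By hypothesis (2), $\mathbf{SO}(q_{1,v_0})$ has Tits index ${}^1D_{n,n}^{(1)}$ while $\mathbf{SO}(q_{2,v_0})$ has ${}^1D_{n,n-2}^{(1)}$, so by Lemma \ref{lem:evensplitsubforms} the Tits indices of $\mathbf{SO}(r_{v_0})$ and $\mathbf{SO}(r'_{v_0})$ are $B_{n-1,n-1}$ and $B_{n-1,n-2}$ respectively. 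But isogroupy over $k$ implies isogroupy over $k_{v_0}$, forcing these local indices to coincide, a contradiction.

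The main obstacle I anticipate is the construction step: ensuring that $s$ is globally represented by $q_1$ while simultaneously satisfying the required real-place signature conditions. For $m \ge 5$ this is routine because dimension-$\ge 5$ quadratic forms over nonarchimedean local fields are universal, making local representability at finite places automatic. The low-dimensional case $m = 4$ requires a more delicate local-to-global patching argument in the spirit of the proof of Theorem \ref{finiterepodd}, but the Square Existence Lemma provides the flexibility needed to prescribe additional finite-place conditions. Once $r$ has been produced, the non-isogroupy conclusion is essentially immediate from Lemma \ref{lem:evensplitsubforms} and the local invariance of the Tits index under isogroupy.
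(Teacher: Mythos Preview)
Your proposal is correct and shares the same endgame as the paper: both use Lemma \ref{lem:evensplitsubforms} to show that every codimension-one subform of $q_1$ has local index $B_{n-1,n-1}$ at $v_0$, while every codimension-one subform of $q_2$ has local index $B_{n-1,n-2}$, which immediately blocks isogroupy.

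The difference lies in how $r$ is produced. The paper specifies local invariants for the one-dimensional complement $t_v$ and for $r_v$ at every place, verifies the Hilbert reciprocity compatibility condition explicitly, and then invokes the local-to-global existence theorem (Theorem \ref{localglobal}) to assemble global forms $t$ and $r$ with $t\oplus r\cong q_1$. You instead argue that $q_1$ globally represents a suitable $s$ via Hasse--Minkowski and take $r=\langle s\rangle^\perp$ directly. Your route is a bit more economical here: since every regular form of dimension $\ge 4$ over a nonarchimedean local field is universal (because adding $\langle -a\rangle$ gives a form of dimension $\ge 5$, which is always isotropic), the finite-place representability is automatic even when $m=4$, so your hedging about that case is unnecessary. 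The paper's explicit local patching, on the other hand, is uniform with the proofs of Theorems \ref{finiterepodd} and \ref{finiterepeven2}, where finer control of the local invariants of $r$ (not just its existence as a complement) is needed, and where the complement has dimension $1$ or $2$ and the exceptional restrictions in Theorem \ref{localexistence} come into play.
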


\begin{proof}
Again we are going to construct the desired forms locally and then use the existence, uniqueness, and local-to-global results of Section \ref{sectionquadforms} to create the desired global forms.
Let $S=\{v_0\}\cup\{\mbox{infinite real places of $k$}\}$ and for each $v\in S$, let $\alpha_v = \det q_1(k_v^\times)^2$.
By Lemma \ref{squareexistence}, we choose an $s\in k^\times$ for which $s\in \alpha_v$ for all $v\in S$.

For each finite place $v\in V_k$, define $t_{v}, r_{v}$ to be the quadratic $k_v$-forms with invariants given by:
\begin{align*}
&\dim t_{v}=1 && \dim r_{v}=m-1\\
&\det t_{v}=\frac{\det q_1}{s} && \det r_{v}=s\\
&c_v(t_{v})=1 && c_{v} (r_{v})=c_{v} (q_1)\ \left(s,\frac{\det q_{1}}{s}\right)_{v}.
\end{align*}
Such forms exist by Theorem \ref{localexistence} (3).
For each infinite place $v\in V_k$, define $t_v$ by:
\begin{align*}
&t_v=\left \langle \frac{\det q_1}{s}\right\rangle.
\end{align*}
At each complex place $t_v$ divides $q_1\otimes k_v$.  
By assumption, $q_1$ is ordered at each real place $v\in V_k$, and hence $t_v=\langle 1\rangle$ is a subform of $q_1\otimes k_v$.  
Therefore at each infinite place it makes sense to take the complement of $t_v$ in $q_{1}\otimes k_v$ and we may define forms $r_v$ by
\begin{align*}
 r_v=t_v^\perp.
\end{align*}
At each complex place $v\in V_k$, we trivially have $c_v(r_v)=1=c_v(q_1).$  
For each real $v\in V_k$, let  $(m_+^{(v)}, m_-^{(v)})$ denote the signature of $q_1\otimes k_v$.  
Observe that $r_v$ has signature $(m_+^{(v)}-1, m_-^{(v)})$, and hence is isotropic whenever $q_1\otimes k_v$ is isotropic.  
At such an isotropic real place, $c_v(r_v)=c_v(q_1).$

Just as in the proof of Theorem \ref{finiterepodd},  we have:
\begin{itemize}
\item  The families $\{t_v\}_{v\in V_k}$ and $\{r_v\}_{v\in V_k}$ satisfy the global compatibility conditions (see \ref{compatibilityodd}), and hence by Theorem \ref{localglobal}, there exist quadratic forms $t$ and $r$ over $k$ such that for all $v\in V_k$, $t\otimes k_v\cong t_v$ and $r\otimes k_v\cong r_v$.
\item By Theorem \ref{localuniqueness} (3),  $t_v\oplus r_v$ and $q_1\otimes k_v$ are isometric at each place $v\in V_k$.
\item By Theorem \ref{localglobaluniqueness} we conclude $t\oplus r \cong q_1$, and hence $r$ is a subform of $q_1$.
\end{itemize}

Our assumptions on the Hasse invariant at $v_0$ and equation \eqref{formula2} imply   that $\mathbf{SO}(q_{1,v_0})$ is of type ${}^1D_{n,n}^{(1)}$ while $\mathbf{SO}(q_{2,v_0})$ is of type ${}^1D_{n,n-2}^{(1)}$.
By Lemma \ref{lem:evensplitsubforms}(i), $\mathbf{SO}(r)$ is the split group $B_{n-1,n-1}$ at $v_0$, 
but by Lemma \ref{lem:evensplitsubforms}(ii), every codimension one subbform of $q_2$ yields the nonsplit group of type $B_{n-1,n-2}$.
Hence no subform of $q_2$ is isogroupic to $r$.
%
%
%
\end{proof}

%

\begin{ex}\label{hyerbolic4}
Consider the following 4-dimensional quadratic forms over $\Q$:
$$q_1=\langle 1,1, 5, -1\rangle\qquad \mbox{and}  \qquad q_2=\langle  3, 3, 5, -1\rangle. $$
Observe that $\det q_1=-5=\det q_2$,  which in $\Q_3$ is a square.  
Hence these have discriminant 1 in $\Q_3$.  
Furthermore, a quick computation shows $c_3(q_1)=1$ and $c_3(q_2)=-1$.  
By Theorem \ref{finiterepeven1}, there exists a 3-dimensional quadratic form $r\subset q_1$ so that  $\mathbf{H}:=\mathbf{SO}(r)\subset \mathbf{SO}(q_1)$ but $\mathbf{H}$ is not $\Q$-isomorphic to a subgroup of $\mathbf{SO}(q_2)$.
It is not hard to check that $r=\langle1,1, -1\rangle$ is such a form.
\end{ex}

\begin{thm}\label{finiterepeven2} 
Let $k$ be a number field and let $m=2n\ge 6$.
Suppose that  $q_1$ and $q_2$ are nonisometric $m$-dimensional quadratic forms over $k$ such that
\begin{enumerate}
\item at each infinite place $v$,  $q_{1,v}$ and  $q_{2,v}$ are ordered and isometric, and
\item there is a finite place $v_0\in V_k$ where:
	\begin{enumerate}
		\item $\mathrm{disc}_{v_0} q_1=1$,
		\item $\mathrm{disc}_{v_0} q_2\neq 1$,
		\item $c_{v_0} (q_1)\ne c_{v_0}(q_2)(-1, \mathrm{disc}(q_2))_{v_0}^{\frac{m-2}{2}}$
	\end{enumerate}
\end{enumerate}
Then there exists an $(m-2)$-dimensional quadratic subform $r$ of $q_2$  
that is not isogroupic to a subform of $q_1$.
Furthermore if $q_2$ is isotropic at a real place, then $r$ can be chosen to be isotropic at that real place as well.
\end{thm}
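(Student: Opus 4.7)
The proof will follow the local-to-global template of Theorems \ref{finiterepodd} and \ref{finiterepeven1}, but now the complement $t$ of the desired subform $r$ inside $q_2$ will be two-dimensional. The new ingredient is the exceptional restriction $(*)$ of Theorem \ref{localexistence}(3): any $2$-dimensional local form with determinant $-1$ is forced to have Hasse invariant $1$. Hypotheses (a) and (b) are calibrated so that this restriction will apply at $v_0$ to any complement $t'$ of a putative subform $r'\subset q_1$ that is isogroupic to our $r$, while leaving our own complement $t\subset q_2$ free --- which in turn lets me tune $c_{v_0}(r)$ independently of the value dictated by the $q_1$-side.

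To construct $r$, I will take $S=\{v_0\}\cup\{v : v\ \text{real}\}$ and assign square classes $\alpha_{v_0}=(-1)^{n-1}(k_{v_0}^\times)^2$ (so that $\mathrm{disc}(r_{v_0})=1$) and $\alpha_v=\det(q_2)\,(k_v^\times)^2$ at real $v$ (so that $\det t_v=1$). Lemma \ref{squareexistence} then produces $s\in k^\times$ lying in all these classes simultaneously. Next I will build, at each place, a $2$-dimensional $t_v$ and an $(m-2)$-dimensional $r_v$ with $\det t_v=\det(q_2)/s$, $\det r_v=s$, $c_v(t_v)=1$, and $c_v(r_v):=c_v(q_2)\,(s,\det q_2/s)_v$. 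At finite places the choice is provided by Theorem \ref{localexistence}(3), which always permits $c=1$; at complex places, dimension suffices; at real places the ordering together with $m\ge 6$ forces $m_+^{(v)}\ge 3$, so that $t_v=\langle 1,1\rangle$ embeds into $q_{2,v}$ and $r_v=t_v^\perp$ inherits isotropy from $q_{2,v}$. Theorem \ref{localuniqueness} then yields $t_v\oplus r_v\cong q_{2,v}$ at every $v$. The global compatibility $\prod_v c_v(t_v)=1$ is immediate since $c_v(t_v)=1$ everywhere, and Hilbert reciprocity then gives $\prod_v c_v(r_v)=1$. Theorems \ref{localglobal} and \ref{localglobaluniqueness} will deliver global forms $r$, $t$ with $r\oplus t\cong q_2$.

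For the concluding step, I will suppose $r'$ is isogroupic to $r$ and embeds into $q_1$ with complement $t'$ of dimension $2$. The $k_{v_0}$-isomorphism $\mathbf{SO}(r)\cong \mathbf{SO}(r')$ forces matching Tits indices at $v_0$, so $\mathrm{disc}(r'_{v_0})=\mathrm{disc}(r_{v_0})=1$ and $c_{v_0}(r')=c_{v_0}(r)$. Hence $\det r'_{v_0}=(-1)^{n-1}$, and combining with hypothesis (a) gives $\det t'_{v_0}=\det(q_{1,v_0})\cdot(-1)^{n-1}=-1$. The exceptional restriction then forces $c_{v_0}(t')=1$, and the product formula applied at $v_0$ to $q_1=r'\oplus t'$ yields $c_{v_0}(q_1)=c_{v_0}(r)\,(-1,-1)_{v_0}^{n-1}$. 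Substituting the constructed value of $c_{v_0}(r)$ and simplifying via $(-1,-\mathrm{disc}(q_2))_{v_0}^{n-1}=(-1,-1)_{v_0}^{n-1}(-1,\mathrm{disc}(q_2))_{v_0}^{n-1}$ produces $c_{v_0}(q_1)=c_{v_0}(q_2)\,(-1,\mathrm{disc}(q_2))_{v_0}^{n-1}$, directly contradicting hypothesis (c). The hard part is precisely setting up the asymmetry between $q_1$ and $q_2$ at $v_0$ so that the dimension-$2$ exceptional restriction is triggered on the $q_1$-side but not on the $q_2$-side; once this is arranged, the remainder reduces to a product-formula manipulation.
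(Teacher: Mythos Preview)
Your proposal is correct and follows essentially the same approach as the paper: the same choice of $s$ via Lemma~\ref{squareexistence} with $\alpha_{v_0}=(-1)^{n-1}$ and $\alpha_v=\det(q_2)$ at real places, the same local construction of $t_v$ and $r_v$ with $c_v(t_v)=1$, the same local-to-global patching, and the same contradiction at $v_0$ using that $\det t'_{v_0}=-1$ triggers the exceptional restriction while $\det t_{v_0}=-\mathrm{disc}_{v_0}(q_2)\neq -1$ does not. Your identification of the key asymmetry---that hypotheses (a) and (b) make the dimension-$2$ restriction bite on the $q_1$-side only---is exactly the point, and your final Hilbert-symbol simplification matches the paper's computation.
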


\begin{proof} 
As we did in Theorems \ref{finiterepodd} and \ref{finiterepeven1} we construct the desired forms locally and use the results of Section \ref{sectionquadforms} to create global forms.  
Let $S=\{v_0\} \cup \{\mbox{infinite real places of $k$}\}$ and for  each $v\in S$, we pick square classes $\alpha_v\in k_v^\times/(k_v^\times)^2$ as follows:
\begin{itemize}
\item For $v_0$, let $\alpha_{v_0}$ be such that $\alpha_{v_0}=(-1)^{\frac{m-2}{2}}(k_{v_0}^\times)^2$.
\item For  infinite $v\in S$ let $\alpha_v= \det q_2(k_{v}^\times)^2$.
\end{itemize}
By Lemma \ref{squareexistence} we choose an $s\in k^\times$ for which $s\in \alpha_v$ for all $v\in S$.
 
For each finite place $v\in V_k$, define $t_{v}, r_{v}$ to be the quadratic $k_v$-forms with invariants given by:
\begin{align*}
&\dim t_{v}=2 && \dim r_{v}=m-2\\
&\det t_{v}=\frac{\det q_2}{s} && \det r_{v}=s\\
&c_v(t_{v})=1 && c_{v} (r_{v})=c_{v} (q_2)\ \left(s,\frac{\det q_{2}}{s}\right)_{v} .
\end{align*}
We know such forms exist by Theorem \ref{localexistence} (3).

For each infinite place $v\in V_k$, define form $t_v$ by:
\begin{align*}
&t_v=\left \langle1, \frac{\det q_2}{s}\right\rangle.
\end{align*}
At each complex place $t_v$ divides $q_2\otimes k_v$.  By assumption, $q_2$ is ordered at each real place $v\in V_k$, and hence $t_v=\langle 1, 1\rangle$ is a subform of $q_2\otimes k_v$.  Therefore at each infinite place it makes sense to take the complement of $t_v$ in $q_{2}\otimes k_v$ and we may define forms $r_v$ by
\begin{align*}
 r_v=t_v^\perp.
\end{align*}
At each complex place $v\in V_k$, we trivially have $c_v(r_v)=1=c_v(q_2).$  For each real $v\in V_k$, let  $(m_+^{(v)}, m_-^{(v)})$ denote the signature of $q_2\otimes k_v$.  Observe that $r_v$ has signature $(m_+^{(v)}-2, m_-^{(v)})$, and hence is isotropic whenever $q_2\otimes k_v$ is isotropic.  
Note that $c_v(r_v)=c_v(q_2).$

We shall now show that for each place $v \in V_k$, $t_{v}\oplus r_{v}\cong q_{2} \otimes k_v$.  This is true by construction at the infinite places.  Now suppose $v$ is finite.  Clearly
$$\dim(t_{v}\oplus r_{v})=1+(n-1)=n=\dim( q_{2} \otimes k_v),$$
$$\det(t_{v}\oplus r_{v})= (\det q_2 / s)s=\det( q_{2} \otimes k_v),$$
and by the product formula for the Hasse invariant
$$c(t_{v}\oplus r_{v})=c(t_{v})c(r_{v})\left(\frac{\det q_2}{s}, s\right)= c_v(q_2)\left(\frac{\det q_2}{s}, s\right)^2=c( q_{2} \otimes k_v),$$
and hence by Theorem \ref{localuniqueness} (3), they are isomorphic. 

We now wish to build a global form, and hence must check that our forms satisfy the compatibility criteria of Theorem  \ref{localglobal}.  Observe that $c_v(t_{v})=1$ for all $v\in V_k$ and hence $\prod_{v\in V_k} c_v(t_{v})=1$.  Next observe that by our choice of $s$, $\left(s,\frac{\det q_{2}}{s}\right)_{v}=1$ at each infinite place, and hence
\begin{align}\label{compatibilityeven2}
\prod_{v\in V_k} c_v(r_{v})&=\left(\prod_{v\in V_k\ \mathrm{ finite}} c_v(r_{v})\right)\times \left(\prod_{v\in V_k\ \mathrm{ real}} c_v(r_{v})\right) \times \left(\prod_{v\in V_k\ \mathrm{ complex}} c_v(r_{v})\right)\notag\\
&=\left(\prod_{v\in V_k\ \mathrm{ finite}}c_{v} (q_2)\ \left(s,\frac{\det q_{2}}{s}\right)_{v}\right)\times \left(\prod_{v\in V_k\ \mathrm{ real}} c_v(q_2)\right) \times \left(\prod_{v\in V_k\ \mathrm{ complex}} c_v(q_{2})\right)\notag\\
&=\prod_{v\in V_k} c_v(q_{2}) \times \prod_{v\in V_k} \left(s,\frac{\det q_{2}}{s}\right)_{v}\notag\\
&=1.
\end{align}
Again, the final product is trivial because both the Hasse invariant and the Hilbert symbol of global objects satisfy  product formulas.

By Theorem \ref{localglobal}, there exist quadratic forms $t$ and $r$ over $k$ such that for all $v\in V_k$, $t\otimes k_v\cong t_{v}$ and $r\otimes k_v\cong r_{v}$.  Furthermore, for each $v \in V_k$, we have shown that $t_{v}\oplus r_{v}\cong q_{2} \otimes k_v$ so by Theorem \ref{localglobaluniqueness} we conclude $t\oplus r \cong q_2$, and hence $r$ is a subform of $q_2$.

Let $\mathbf{H}=\mathbf{SO}(r)$.  
We will show that $\mathbf{H}\not\subset \mathbf{G}_1=\mathbf{SO}(q_1)$, and hence that there are no representatives $r'$ of $\mathbf{H}$ such that $r'\subset q_1$.  
Again $\mathbf{H}$ is a group of type $D_n$ over $k$.  
Let $r'$ be any representative of $\mathbf{H}$.  
As in the proof of Theorem \ref{finiterepodd}, the group $\mathbf{H}$ determines the following invariants of $r'$:
\begin{enumerate}
\item $\dim (r')=2n-2 = \dim(r)$.
\item $\mathrm{disc}_{v}( r')=1$ at precisely the places $v\in V_k$ where $\mathbf{H}\otimes k_v$ is a group of inner type (i.e., the $*$-action is trivial).  This means that  $\mathrm{disc}_{v}( r')=1$ if and only if  $\mathrm{disc}_{v}( r)=1$, or in other words, at the places where $\mathrm{disc}_{v}( r)=1$, then $\det r'=\det r$.
\item $c_v(r')=c_v(r)$ at each place $v$ where  $\mathrm{disc}_v(r)=1$  (see equation \eqref{formula2}).
\end{enumerate}

Let $r'$ be any quadratic form satisfying these three properties.  Suppose there exists some form $t'$ such that $r'\oplus t'\cong q_1$.  It follows that $\dim t'=2$, $\det t'= \det q_1/\det r'$. 

Our choice of $s$ implies that $$\mathrm{disc}_{v}( r)=(-1)^{(n-1)}\det r= (-1)^{2n-2}=1.$$
Hence at $v_0$, we have $\det r= \det r'$ and $c_{v_0}( r)= c_{v_0} (r')$.  
Furthermore we have 
\begin{align*}
\mathrm{det}_{v_0} t' 	&=\frac{\det_{v_0} q_1}{\det_{v_0} r'}\\
					&=\frac{(-1)^{\frac{m}{2}}\mathrm{disc}(q_1)}{(-1)^{\frac{m-2}{2}}\mathrm{disc}(r')}\\
					&=\frac{(-1)^{\frac{m}{2}}}{(-1)^{\frac{m-2}{2}}}\\
					&=-1,
\end{align*} 
and thus by the exceptional restriction, $c_{v_0}(t')=1$.  
The product formula at $v_0$ now yields the following contradiction:
\begin{align*}
c_{v_0}(q_1)	&=c_{v_0}(r'\oplus t') \\
			&= c_{v_0}(r')\left(\det r', \frac{\det q_1}{\det r'}\right)_{v_0}\\
			&=c_{v_0}(q_2)\left(\det r', \frac{\det q_2}{\det r'}\right)_{v_0}\left(\det r', \frac{\det q_1}{\det r'}\right)_{v_0}\\
			&=c_{v_0}(q_2)\left((-1)^{\frac{m-2}{2}}, \frac{\det q_1 \det q_2}{(\det r')^2}\right)_{v_0}\\
			&=c_{v_0}(q_2)\left((-1)^{\frac{m-2}{2}},(-1)^{\frac{m}{2}}\det q_2\right)_{v_0}\\
			&=c_{v_0}(q_2)\left(-1,\mathrm{disc}(q_2)\right)_{v_0}^{\frac{m-2}{2}}.
\end{align*}
Hence $r$ is not isogroupic to a subform of $q_1$, concluding the proof.
\end{proof}


\begin{ex}\label{hyperbolic6}
Consider the following 6-dimensional quadratic forms over $\Q$:
$$q_1=\langle 1, 1, 1, 3, 3, -1\rangle \qquad \mbox{and}  \qquad q_2=\langle 1, 1, 1, 1, 1, -5\rangle. $$
Observe that $\det q_1=-1\ne -5=\det q_2$.  Furthermore, $\mathrm{disc}_3(q_1)=1$,  but $\mathrm{disc}_3(q_2)=5$ which is not a square in $\Q_3$.  Furthermore, a quick computation shows $c_3(q_1)=-1$ and $c_3(q_2)=1$.  
By Theorem \ref{finiterepeven2}, there exists a 4-dimensional quadratic form $r\subset q_2$ so that  $\mathbf{H}:=\mathbf{SO}(r)\subset \mathbf{SO}(q_2)$ but $\mathbf{H}$ is not $\Q$-isomorphic to a subgroup of $\mathbf{SO}(q_1)$.
It is not hard to check that $r=\langle1,1, 1, -5\rangle$ is such a form.
\end{ex}


\subsection*{Constructing Subforms In Codimension $>2$}\label{sectionconstructionallin}

We have shown that given certain nonisometric forms, we may find codimension one or codimension two subforms of one that are not isogroupic to a subform of the other.  
We now show that this is the best we can hope for.

\begin{prop}\label{allin}Let $k$ be a number field and let $q_1$ and $q_2$ be $m$-dimensional quadratic forms over $k$, $m\ge 4$, which are isometric at each infinite place.  If $r$ is a $j$-dimensional subform of $q_1$, where $0<j<m-2$,  then $r$ is also a subform of $q_2$.
\end{prop}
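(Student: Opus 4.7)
The plan is to prove this by constructing a complementary form $s'$ locally at each place and patching together via the local-to-global existence theorem. Write $q_1 \cong r \oplus s$ where $s$ has dimension $m-j \geq 3$. We wish to produce an $(m-j)$-dimensional $k$-form $s'$ with $q_2 \cong r \oplus s'$.

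First I would handle the infinite places. Since $q_{1,v} \cong q_{2,v}$ at every infinite $v$, we have $r_v \oplus s_v \cong q_{1,v} \cong q_{2,v}$, so by Witt cancellation we simply set $s'_v := s_v$ at each infinite place. In particular $\dim s'_v = m-j$, $\det s'_v = \det q_{2,v}/\det r_v$, and $c_v(s'_v) = c_v(q_{2,v}) c_v(r_v) (\det r_v, \det s'_v)_v$.

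Next, at each finite place $v$, I would use Theorem \ref{localexistence}(3) to choose a quadratic $k_v$-form $s'_v$ with
\begin{align*}
\dim s'_v = m-j, \quad \det s'_v = \det q_2/\det r, \quad c_v(s'_v) = c_v(q_{2,v})\,c_v(r_v)\,(\det r_v,\det s'_v)_v.
\end{align*}
This is where the hypothesis $j < m-2$ is crucial: it forces $m-j \geq 3$, so the exceptional restriction $(*)$ on Hasse invariants in dimensions $1$ and $2$ does not intervene and the desired $s'_v$ exists. With these choices and the product formula for $c_v$, the local form $r_v \oplus s'_v$ agrees with $q_{2,v}$ in dimension, determinant, and Hasse invariant, so by Theorem \ref{localuniqueness}(3) they are locally isometric at $v$.

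The main thing to verify is the global compatibility hypothesis of Theorem \ref{localglobal}, namely that $c_v(s'_v) = -1$ for only finitely many $v$ with their total product equal to $1$. By construction
\begin{align*}
\prod_v c_v(s'_v) \;=\; \prod_v c_v(q_{2,v}) \cdot \prod_v c_v(r_v) \cdot \prod_v (\det r_v, \det s'_v)_v,
\end{align*}
and each of the three factors on the right equals $1$: the first two by the global product formula for the Hasse invariant applied to the $k$-forms $q_2$ and $r$, and the third by Hilbert reciprocity applied to the global element $\det q_2/\det r \in k^\times/(k^\times)^2$. Thus Theorem \ref{localglobal} produces a global $k$-form $s'$ realizing the family $\{s'_v\}$, and then Theorem \ref{localglobaluniqueness} applied to $r \oplus s'$ and $q_2$ (they are locally isometric everywhere) concludes that $r \oplus s' \cong q_2$, exhibiting $r$ as a subform of $q_2$.

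The substantive content of the argument is really the dimensional count $m - j \geq 3$: this is precisely the regime in which Hasse invariants of local forms are freely prescribable, so that the obstructions exploited in Theorems \ref{finiterepodd}, \ref{finiterepeven1}, and \ref{finiterepeven2} simply disappear. Everything else is a routine assembly of local data via the standard local-to-global machinery already compiled in Section \ref{sectionquadforms}.
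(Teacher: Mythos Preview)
Your proof is correct and follows essentially the same route as the paper: construct the complementary form locally with the prescribed dimension, determinant, and Hasse invariant, verify the product-formula compatibility, and patch via Theorems \ref{localglobal} and \ref{localglobaluniqueness}. You are in fact slightly more explicit than the paper in flagging that $m-j\ge 3$ is exactly what sidesteps the exceptional restriction $(*)$ in Theorem \ref{localexistence}(3).
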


\begin{proof}
For each finite $v\in V_k$, let $t_v$ be the $k_v$ form uniquely determined by 
\begin{itemize}
\item $\dim t_v = n-m$,
\item $\det t_v= \dfrac{\det q_2}{\det r}$, and
\item $c_v(t_v)= c_v(q_2)\ c_v(r) \ \left(\det(r), \dfrac{\det(q_2)}{\det(r)}\right)_v$.
\end{itemize}
Such forms exist by Theorem \ref{localexistence} (3).  
Since $q_1\otimes k_v$ and $q_2\otimes k_v$ are isometric at each infinite $v\in V_k$,  $r\otimes k_v$ is a subform of $q_2\otimes k_v$, and it makes sense to take its complement.  
We define 
\begin{itemize}
\item $t_v:= (r\otimes k_v)^\perp$,
\end{itemize}

It follows that at each infinite place $v$, $$c_v(t_v)=c_v(q_2)c_v(r)\left(\det(r), \frac{\det(q_2)}{\det(r)}\right)_v.$$

To build a global form, we must check that our forms satisfy the compatibility criteria of Theorem  \ref{localglobal}.  This can be seen with the following computation:

\begin{align*}
\prod_{v\in V_k} c_v(t_v)&=\left(\prod_{v\in V_k} c_v(q_2)c_v(r)\left(\det(r), \frac{\det(q_2)}{\det(r)}\right)_v\right)\\
&=\left(\prod_{v\in V_k} c_v(q_2)\right)\times \left(\prod_{v\in V_k} c_v(r)\right)\times \left(\prod_{v\in V_k} \left(\det(r), \frac{\det(q_2)}{\det(r)}\right)_v\right)\\
&=1.
\end{align*}
The final product is trivial because both the Hasse invariant and the Hilbert symbol of global objects satisfy the product formula.  
By Theorem \ref{localglobal} there is a quadratic form $t$ over $k$ such that for all $v\in V_k$, $t\otimes k_v\cong t_v$.  Furthermore, for each $v \in V_k$, $t_v\oplus r_v$ and $q_2\otimes k_v$ have the same local invariants so by Theorem \ref{localuniqueness} they are isometric, and by Theorem \ref{localglobaluniqueness} we conclude $t\oplus r \cong q_2$, and hence $r$ is a subform of $q_2$.
\end{proof}


\section{Proofs of Theorems \ref{thrmB} and \ref{thrmC}}\label{sectionmain}

\begin{proof}[Proof of Theorem \ref{thrmB}]
By Theorem \ref{thrmA} we may assume that $k(M_1)$ and $k(M_2)$ are isomorphic and let $k$ be a fixed representative of this isomorphism class.
We now prove the contrapositive.  
Suppose that $M_1$ and $M_2$ are not commensurable.
If $q_1$ and $q_2$ are $k$-groups giving rise to the spaces $M_1$ and $M_2$, respectively, then 
$q_1$ is not $k$-isometric to any twist of $q_2$
%
%
%
By Lemma \ref{ordered}, we may assume that $q_1$ and $q_2$ are ordered and furthermore, we may replace $q_2$ with a twist so that $q_1$ and $q_2$ are isotropic at the same infinite place.
Since $\mathbf{SO}(q_1)$ and $\mathbf{SO}(q_2)$ are not $k$-isomorphic, the Hasse principle for special orthogonal groups \cite[pg. 348]{PlRa} implies that there exist finite places $v_0$ where  $\mathbf{SO}(q_{1,v_0})$ and $\mathbf{SO}(q_{2,v_0})$ are not $k_{v_0}$-isomorphic, and hence the forms $q_1$ and $q_2$ are not isometric over $k_{v_0}$.

First suppose $m$ is odd.
By Lemma \ref{squareexistence}, we may replace $q_1$ and $q_2$ with similar forms as necessary to guarantee that $\det_{v_0^{(j)}} q_1=\det_{v_0}q_2=1$ while not altering the signatures at the infinite places.  
Hence $c_{v_0}(q_1)\ne c_{v_0}(q_2)$, and by Theorem \ref{finiterepodd} the result follows.  

Now suppose $m=2n$ is even.  
If $\det q_1 = \det q_2$ but $\mathrm{disc}_{v_0}(q_i)\ne 1$, then by Lemma \ref{simc} and Lemma \ref{squareexistence}, we may replace $q_2$ with a similar form while not altering the signatures at the infinite place and for which $c_{v_0}(q_1)=c_{v_0}(q_2)$.  
This would imply $q_1$ and $q_2$ are isomorphic over $k_{v_0}$, contradicting our choice of $v_0$.  
Hence if $\det q_1 = \det q_2$, then after possibly relabeling, their  invariants must satisfy both of the following:
\begin{enumerate}
		\item $\mathrm{disc}_{v_0} (q_1)=1= \mathrm{disc}_{v_0} (q_2)$, and
		\item $c_{v_0}(q_1)=(-1,-1)_{v_0}^{\frac{n(n-1)}{2}} \ne -(-1,-1)_{v_0}^{\frac{n(n-1)}{2}}=c_{v_0}(q_2).$
	\end{enumerate}
By Theorem \ref{finiterepeven1} the result follows.  
Otherwise, if  $\det q_1 \neq \det q_2$, then, after possible relabeling, we have $\mathrm{disc}_{v_0} q_1=1$ and $\mathrm{disc}_{v_0} q_2\neq 1$.
Furthermore, if $c_{v_0}(q_1)=c_{v_0}(q_2)(-1, \mathrm{disc}(q_2))_{v_0}^{\frac{m-2}{2}}$, then we will replace $q_2$ with a similar form in the following way.  
Let $S=\{v_0\}\cup \{\mbox{infinite real places of $k$}\}$  and for each $v\in S$, we pick a square class $\alpha_v\in k_v^\times/(k_v^\times)^2$ as follows:
\begin{itemize}
\item  at $v_0$, $(\alpha_{v_0} , \mathrm{disc}(q_2))_{v_0}=-1$ (note that such a class exists by the  the nondegeneracy of the Hilbert symbol and the  fact that $\mathrm{disc}(q_2)\neq 1$), and
\item for all $v\in S$ real, $\alpha_v$ is trivial. 
\end{itemize}
By Lemma \ref{simc}, it follows that $c_{v_0}(\lambda q_2)=-c_{v_0}(q_2)$ and replacing $q_2$ by $\lambda q_2$, it follows that $c_{v_0}(q_1)\neq c_{v_0}(q_2)(-1, \mathrm{disc}(q_2))_{v_0}^{\frac{m-2}{2}}$.  
By Theorem \ref{finiterepeven2} the result follows.
\end{proof}

Since commensurable spaces are length-commensurable, we have the following corollary.  

\begin{cor}\label{cor:tothrmB}
Let $M_1$ and $M_2$ be $\R$-simple arithmetic locally symmetric spaces coming from quadratic forms of dimension $\ge 5$.  If $\Q TG(M_1)=\Q TG(M_2)$, then $\Q L(M_1)=\Q L(M_2)$.
\end{cor}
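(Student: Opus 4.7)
The plan is to derive this as an immediate consequence of Theorem \ref{thrmB} combined with the standard observation, already invoked in Section \ref{sectionintroduction}, that commensurable Riemannian manifolds are length-commensurable. So the proof is essentially a two-line chain of implications: from equal rational totally geodesic spectra to commensurability (via Theorem \ref{thrmB}), and from commensurability to equal rational length spectra (via the general fact about closed geodesics lifting to a common finite cover).

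First I would invoke Theorem \ref{thrmB} directly: since $M_1$ and $M_2$ are $\R$-simple arithmetic locally symmetric spaces coming from quadratic forms of dimension $\geq 5$ with $\Q TG(M_1)=\Q TG(M_2)$, the theorem tells us that $M_1$ and $M_2$ are commensurable. This already takes care of all of the heavy lifting; the classification of subforms and the Hasse principle arguments from Section \ref{sectionconstructions} have done all the real work.

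Next I would explain why commensurability implies length-commensurability. Let $\widetilde{M}$ be a shared finite sheeted cover with projections $\pi_i:\widetilde{M}\to M_i$. Given any closed geodesic $\gamma$ in $M_1$ of length $\lambda$, a connected component of $\pi_1^{-1}(\gamma)$ is a closed geodesic $\widetilde{\gamma}$ in $\widetilde{M}$ of length $d_1\lambda$ for some positive integer $d_1$ (the degree with which $\widetilde{\gamma}$ wraps $\gamma$), and its image $\pi_2(\widetilde{\gamma})$ is a closed geodesic in $M_2$ of length $d_1\lambda/d_2$ for some positive integer $d_2$. Hence $\lambda \in \Q L(M_2)$, and by symmetry $\Q L(M_1)=\Q L(M_2)$.

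There is essentially no obstacle here: the corollary is a formal consequence of Theorem \ref{thrmB} together with a well-known elementary fact about closed geodesics under finite covers, which is recorded in the paper's introduction just after equation \eqref{qlm1}. The only care required is the routine verification that rescaling by the covering degrees keeps us inside $\Q L$, which is immediate from the definition in \eqref{qlm1}.
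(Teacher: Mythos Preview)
Your proposal is correct and follows exactly the paper's approach: the corollary is stated immediately after Theorem~\ref{thrmB} with the one-line justification that commensurable spaces are length-commensurable. Your added explanation of why commensurability implies length-commensurability (lifting closed geodesics to a common finite cover) is a helpful elaboration of the fact the paper simply cites from the introduction.
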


Interestingly, Corollary \ref{cor:tothrmB} says that the set of totally geodesic subspaces determines the rational multiples of the lengths of all closed geodesics, even though there can exist closed geodesics that do not lie in any proper nonflat totally geodesic subspace.  
Upon specializing to $\R$-rank one spaces, Theorem \ref{thrmC} follows from Theorem \ref{thrmB}.
Furthermore, unravelling the proof of Theorem \ref{thrmB}
we see that we can tell apart noncommensurable spaces of type $B_n$ by solely looking at codimension one subforms, yielding the following theorem.

\begin{thm}\label{hypersurfacessuffice}
Let $M_1$ and $M_2$ be even dimensional arithmetic hyperbolic $n$-orbifolds, $n\ge 4$.  
Suppose every codimension one, totally geodesic subspace in $M_1$ is commensurable to a codimension one totally geodesic subspace in $M_2$ and vice versa.  
Then $M_1$ and $M_2$ are commensurable.
\end{thm}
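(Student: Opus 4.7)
The plan is to mimic the proof of Theorem \ref{thrmB}, taking advantage of the fact that when $n$ is even the ambient quadratic forms have odd dimension $m = n+1$. In that regime, every step of the earlier argument can be carried out using only codimension one subforms.

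First, I would reduce to the case $k(M_1) \cong k(M_2)$. The remark after the proof of Theorem \ref{thrmA} notes that for the fields of definition to be isomorphic it already suffices that $M_1$ and $M_2$ share, up to commensurability, a single totally geodesic subspace coming from a codimension one subform of each. Under the hypothesis of the theorem, every codimension one totally geodesic subspace of $M_1$ is commensurable to one in $M_2$, and these are subform subspaces, so the hypothesis of Theorem \ref{thrmA} is satisfied. Hence $k_1 \cong k_2 =: k$, and I may regard both $q_1$ and $q_2$ as quadratic forms over the same field $k$.

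Second, I argue by contrapositive. Suppose $M_1$ and $M_2$ are not commensurable. By Lemma \ref{ordered} and a twist, I may assume $q_1$ and $q_2$ are ordered and isotropic at the same real place. Proposition \ref{commimpliesisothm} implies $\mathbf{SO}(q_1)$ and $\mathbf{SO}(q_2)$ are not $k$-isomorphic, and the Hasse principle for special orthogonal groups furnishes a finite place $v_0$ at which they are not $k_{v_0}$-isomorphic. Since $n$ is even, $m$ is odd, so as in the odd-dimensional branch of the proof of Theorem \ref{thrmB}, Lemma \ref{squareexistence} lets me scale $q_1$ and $q_2$ (within their similarity classes) so that $\det_{v_0} q_1 = \det_{v_0} q_2 = 1$ while leaving the signatures at infinite places unaffected. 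The local nonisomorphism at $v_0$ then forces $c_{v_0}(q_1) \ne c_{v_0}(q_2)$.

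Third, I invoke Theorem \ref{finiterepodd}, whose hypotheses now hold. It produces an $(m-1)$-dimensional, i.e.\ codimension one, quadratic $k$-subform $r$ of $q_1$ with the property that $\mathbf{SO}(r)$ is not $k$-isomorphic to $\mathbf{SO}(r')$ for any subform $r' \subset q_2$. The final clause of Theorem \ref{finiterepodd} guarantees $r$ may be chosen isotropic at the unique real place where $q_1$ is isotropic, so the associated subform subspace $N_r \subset M_1$ is a nonflat, finite volume, totally geodesic subspace of codimension one. By Proposition \ref{commimpliesisothm}, no codimension one totally geodesic subspace of $M_2$ can be commensurable to $N_r$, contradicting the hypothesis.

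The main obstacle is bookkeeping rather than any new ingredient: I need to verify that the successive normalizations (replacing $q_2$ by a twist so that the isotropic real places agree, scaling each form to trivialize its determinant at $v_0$) all preserve both the commensurability classes of $M_1$ and $M_2$ and the local data that feeds into Theorem \ref{finiterepodd}. Once that is in place, the theorem is really just the specialization of Theorem \ref{thrmB} to the case where the odd-dimensional branch of the argument applies, combined with the fact that in that branch the witness subform produced is automatically of codimension one.
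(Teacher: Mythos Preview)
Your proposal is correct and follows exactly the approach the paper indicates: since $n$ is even the ambient forms have odd dimension $m=n+1$, so unravelling the proof of Theorem~\ref{thrmB} uses only the odd-$m$ branch (Theorem~\ref{finiterepodd}), which already produces a codimension one witness subform; combined with the remark after Theorem~\ref{thrmA} (a single shared codimension one subform subspace forces $k_1\cong k_2$) and Proposition~\ref{subformsubspaces} (all finite-volume totally geodesic subspaces of a standard arithmetic hyperbolic orbifold are subform subspaces), this is precisely the paper's intended argument. The only point worth making explicit is that for hyperbolic orbifolds, once $q_1$ and $q_2$ are ordered and twisted to be isotropic at the same real place their signatures agree at every infinite place, so the Hasse principle really does force the discrepancy to a finite place~$v_0$.
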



\section{Hyperbolic Subspace Dichotomy and Other Applications}\label{sectionapplications}

In Construction \ref{subformsubspacedef} we showed that quadratic subforms give totally geodesic subspaces called \hyperref[subformsubspacedef]{subform subspaces}.  
We now show that in the case of standard arithmetic hyperbolic orbifolds, these are the only finite volume totally geodesic subspaces.

\begin{prop}\label{subformsubspaces}
If $M$ is a standard arithmetic hyperbolic $n$-orbifold, $n\ge 4$, and $N\in \Q TG(M)$ then $(i)$ $k(N)=k(M)$ and $(ii)$ 
$N$ is a \hyperref[subformsubspacedef]{subform subspace}.
\end{prop}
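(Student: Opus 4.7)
The plan is to prove (ii)---that $N$ is a subform subspace---since (i) follows from $k(N_r) = k$ for any subform subspace. Realize $\tilde M = \mathbb{H}^n$ as the hyperboloid $\{x \in V \otimes_k k_{v_0} : q(x) = -1\}$, where $v_0$ is the unique real place of $k$ at which $q$ is isotropic. The totally geodesic subspace $N$ lifts to some $\tilde N \cong \mathbb{H}^j$ with $2 \le j \le n$, determined by a unique real subspace $W' \subset V \otimes_k k_{v_0}$ of signature $(j,1)$. It suffices to show $W'$ is the base change of a $k$-subspace $W \subseteq V$, for then $(W, q|_W)$ is a $k$-subform realizing $N$ as the subform subspace $N_{q|_W}$.

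Let $W \subseteq V$ be the smallest $k$-subspace of $V$ with $W \otimes_k k_{v_0} \supseteq W'$, and put $\Lambda := \mathrm{Stab}_\Gamma(\tilde N)$, a lattice in $\mathrm{Stab}_G(\tilde N)^\circ = \mathbf{SO}(W', q|_{W'})^\circ \times \mathbf{SO}((W')^\perp_V)$, whose second (compact) factor fixes $\tilde N$ pointwise. Each $\gamma \in \Lambda \subset \mathbf{SO}(q)(k)$ acts $k$-linearly on $V$ and preserves $W'$, so $\gamma(W)$ is a $k$-subspace whose base change contains $W'$; the minimality of $W$ forces $\gamma(W) = W$. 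Hence $\Lambda$ lies in the intersection
$$\mathrm{Stab}_G(\tilde N)^\circ \cap \mathrm{Stab}_{\mathbf{SO}(q)(k_{v_0})}(W\otimes_k k_{v_0}) = \mathbf{SO}(W', q|_{W'})^\circ \times \mathbf{SO}(U) \times \mathbf{SO}(W^\perp\otimes k_{v_0}),$$
where $U := (W')^\perp \cap (W \otimes_k k_{v_0})$ and $W^\perp \subset V$ is the $k$-orthogonal complement of $W$.

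The heart of the argument is a dimension count: writing $\dim_k W = p+1$, a direct calculation gives
$$\dim \mathrm{Stab}_G(\tilde N)^\circ - \dim(\mathrm{intersection}) = (p-j)(n-p) = \dim U \cdot \dim(W^\perp\otimes k_{v_0}).$$
Since a lattice in a real Lie group cannot be contained in a closed subgroup of strictly smaller dimension, either $U = 0$ or $W^\perp = 0$. The case $U = 0$ yields $W \otimes_k k_{v_0} = W'$, identifying $N$ with the subform subspace $N_{q|_W}$. The remaining case $W = V$ (where $\Lambda$ preserves no proper $k$-subspace of $V$) will be excluded via arithmeticity: by Theorem \ref{totallygeoarearith}, $\Lambda$ is arithmetic in its associated $\mathbb{Q}$-algebraic subgroup $\mathbf{H} \subseteq R_{k/\mathbb{Q}}\mathbf{SO}(q)$. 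Since a lattice in $R_{k/\mathbb{Q}}\mathbf{SO}(q)$ would have full (finite) covolume at $v_0$, whereas $\Lambda$ has infinite covolume in $\mathbf{SO}(q)(k_{v_0})$, we have $\mathbf{H} \subsetneq R_{k/\mathbb{Q}}\mathbf{SO}(q)$ properly. Its structure via Proposition \ref{restrictionofscalars} writes $\mathbf{H} = R_{k'/\mathbb{Q}}\mathbf{H}_0$ for an absolutely almost simple $\mathbf{H}_0$ of type $B$ or $D$ over some $k'$; matching $\mathbf{H}_0(k'_{v'_0}) \cong \mathbf{SO}(j,1)^\circ$ realizes $\mathbf{H}_0 \cong \mathbf{SO}(r)$ for a $(j+1)$-dimensional quadratic $k'$-space. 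Lemma \ref{fielddefdim} and Proposition \ref{fielddefcontain}, combined with the adjunction $\mathrm{Hom}_\mathbb{Q}(\mathbf{H}, R_{k/\mathbb{Q}}\mathbf{SO}(q)) = \mathrm{Hom}_k(\mathbf{H}\otimes_\mathbb{Q} k, \mathbf{SO}(q))$, then force $k' = k$ and produce a $k$-subspace $W_r \subset V$ with $W_r \otimes_k k_{v_0} = W'$, contradicting $W = V$.

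The main obstacle is the exclusion of the $W = V$ case: the clean dimension-counting argument disposes of the generic situation immediately, but the possibility that $W'$ fills out all of $V$ after $k$-saturation requires carefully leveraging both the arithmeticity of $\Lambda$ and the field-of-definition machinery of Section \ref{sectionfieldthrma}. The subtlety is that preserving $W'$ is an $\mathbb{R}$-algebraic but not a $k$-algebraic condition when $W'$ is not $k$-defined, so \emph{a priori} the $k$-Zariski closure of $\Lambda$ can be strictly larger than its $k_{v_0}$-closure; tracking the $\mathbb{Q}$-algebraic structure of $\mathbf{H}$ through the restriction-of-scalars formalism is what closes the remaining gap.
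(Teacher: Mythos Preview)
Your approach differs substantially from the paper's, and the exclusion of the case $W = V$ contains a genuine gap. You invoke Proposition~\ref{fielddefcontain} to force $k' = k$, but that proposition carries the hypothesis $\dim \mathbf{SO}(q) < 2\dim \mathbf{SO}(r)$, i.e.\ $\binom{n+1}{2} < 2\binom{j+1}{2}$. For small $j$ (already $j=2$ with $n \ge 4$) this inequality fails, so the argument does not apply to totally geodesic surfaces or other low-dimensional $N$. Even granting $k' = k$, the final assertion that the adjunction ``produces a $k$-subspace $W_r \subset V$ with $W_r \otimes_k k_{v_0} = W'$'' is not justified: a $k$-morphism $\mathbf{SO}(r) \to \mathbf{SO}(q)$ need not come from a $k$-linear isometric embedding of quadratic spaces, and you would still have to match its image with the specific real subgroup $\mathbf{SO}(W')$.

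The paper avoids the $W = V$ difficulty entirely by building $W$ \emph{from below} rather than from above. It sets $W$ equal to the $k$-span of $L \cap W'$, where $L \subset V$ is an $\mathcal{O}_k$-lattice stabilized by $\Gamma$, and uses Borel density of $\Lambda = G_L \cap H$ in $\mathbf{SO}(W')$ to conclude that the $\R$-span of $L \cap W'$ is all of $W'$. Equivalently: $\Lambda$ acts trivially on $(W')^\perp$, so the $k$-subspace $V^\Lambda \subset V$ of $\Lambda$-fixed vectors satisfies $V^\Lambda \otimes_k \R = (W')^\perp$, and then $W := (V^\Lambda)^\perp$ gives $W \otimes_k \R = W'$ in one stroke. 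This short argument shows directly that $W'$ is defined over $k$, so in your framework the minimal $k$-subspace containing $W'$ is automatically proper and your dimension count already finishes the proof---the elaborate machinery of Section~\ref{sectionfieldthrma} is not needed here.
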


\begin{proof}
By assumption, $M=M_q$ where $(V,q)$ is a quadratic $m$-space, $m=n+1\ge 5$, over a totally real number field $k$ with a unique real place $v$ where $q$ is isotropic.
Let $\mathbf{G}=\mathbf{SO}(V,q)$ and denote $k_v$ by $\R$.
Let $H\subset G:=\mathbf{G}(\R)$ be the connected semisimple Lie subgroup giving rise to $N$.  
Since $M_q$ is hyperbolic, it follows that $H=\mathbf{H}(\R)^\circ$ where $\mathbf{H}=\mathbf{SO}(W',r')$ for some $\R$-subspace $W'\subset V_\R$ and $r'$ the restriction of $ q_\R$ to $W'$.  
Let $L\subset V$ be an $\mathcal{O}_k$-lattice and let $G_{L}$ be its stabilizer in $G$.  
Since $\Lambda:=G_{L}\cap H$ is a lattice in $H$, it is Zariski-dense in $\mathbf{H}$.
It follows that the  $\R$-$\mathrm{span}$ of $L\cap W'$ must be all of $W'$.
Let $W$ denote the $k$-span of $L\cap W'$ and let $r$ be the restriction of $q$ to $W$.  
Then $N=N_r$ and the result follows. 
\end{proof}


\begin{proof}[Proof of Theorem \ref{thrmD}]
If $M_1$ and $M_2$ share a single finite volume totally geodesic subspace, Proposition \ref{subformsubspaces} implies $k(M_1)$ and $k(M_2)$ are isomorphic. 
Let $k$ be a fixed representative of the isomorphism class of $k(M_1)$ and $k(M_2)$.
We may now choose quadratic forms $q_1$ and $q_2$ over $k$ that are isotropic at the same real place and that give rise to $M_1$ and $M_2$, respectively.
The result then follows by Proposition \ref{allin}.  
\end{proof}

In particular, since all noncompact arithmetic hyperbolic $n$-orbifolds, $n\ge 4$, come from $k=\Q$, 
we have the following corollary.

\begin{cor}
Let $M_1$ and $M_2$ be $n$-dimensional ($n\ge 4$) noncompact, standard arithmetic hyperbolic orbifolds.
Then, up to commensurability, $M_1$ and $M_2$ have the exact same collection of finite volume totally geodesic subspaces of noncompact type of codimension $>2$.
\end{cor}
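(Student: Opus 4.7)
The plan is to reduce this statement directly to Theorem \ref{thrmD}(1) by showing that every noncompact standard arithmetic hyperbolic $n$-orbifold with $n \ge 4$ has field of definition isomorphic to $\Q$. Once this reduction is in place, the range $1 < j < n-2$ appearing in Theorem \ref{thrmD}(1) corresponds exactly to dimensions $j \ge 2$ (forcing the subspaces to be of noncompact type) together with codimension $n - j > 2$, which is precisely the conclusion of the corollary.

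To establish that $k(M_i) \cong \Q$, I would first invoke Construction \ref{quadcon} to write $M_i = M_{q_i}$ for an admissible hyperbolic pair $(k_i, q_i)$ with $k_i$ totally real, $\dim q_i = m = n+1 \ge 5$, and $q_i$ isotropic at a unique real place of $k_i$ and anisotropic elsewhere. By Godement's compactness criterion applied to $\mathbf{SO}(q_i)$, the arithmetic lattice is nonuniform (equivalently, $M_i$ is noncompact) if and only if $\mathbf{SO}(q_i)$ is $k_i$-isotropic, which in the orthogonal setting is equivalent to $q_i$ being isotropic over $k_i$. Meyer's theorem, a standard consequence of the Hasse-Minkowski principle, asserts that a quadratic form of dimension $\ge 5$ over a number field is isotropic over that field if and only if it is isotropic at every completion. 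Over a nonarchimedean local field, every nondegenerate form of dimension $\ge 5$ is automatically isotropic, so the obstruction is concentrated at the archimedean places. Since $q_i$ is anisotropic at all real places except one, global isotropy forces $k_i$ to have a unique real place; being totally real, this yields $k_i = \Q$.

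With $k(M_1) \cong \Q \cong k(M_2)$ established, Theorem \ref{thrmD}(1) applies directly and delivers the conclusion: for every integer $j$ with $1 < j < n-2$, up to commensurability $M_1$ and $M_2$ carry the same collection of $j$-dimensional finite volume totally geodesic subspaces. Translating the conditions, $j \ge 2$ is exactly the requirement that the subspace be of noncompact type, and $j < n-2$ is exactly codimension strictly greater than $2$.

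There is no substantive obstacle beyond invoking Theorem \ref{thrmD}; the corollary is a direct specialization. The only additional ingredient is the classical fact that noncompactness of a standard arithmetic hyperbolic orbifold of dimension $\ge 4$ forces $k = \Q$, and this is immediate from Godement's criterion combined with Hasse-Minkowski.
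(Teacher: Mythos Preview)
Your proposal is correct and follows essentially the same approach as the paper: the paper simply remarks that all noncompact arithmetic hyperbolic $n$-orbifolds with $n\ge 4$ come from $k=\Q$ and then invokes Theorem~\ref{thrmD}(1). You supply the standard justification (Godement plus Hasse--Minkowski/Meyer) for that fact, which the paper leaves implicit, but the structure of the argument is identical.
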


Recent work by McReynolds \cite{McRey} shows that certain noncommensurable arithmetic manifolds arising from the semisimple Lie groups of the form $(\mathbf{SL}_d(\R))^r \times (\mathbf{SL}_d(\C))^s$ have the same commensurability classes of totally geodesic surfaces coming from a fixed field.   An immediate consequence of our work above proves the following

\begin{cor}
For each $n\ge 4$, there exist noncommensurable, standard arithmetic, hyperbolic $n$-orbifolds $M_1$ and $M_2$ that have the same commensurability classes of totally geodesic surfaces.
\end{cor}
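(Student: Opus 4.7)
The plan is to deduce the corollary directly from the Hyperbolic Subspace Dichotomy (Theorem \ref{thrmD}). For $n\ge 5$, I would exhibit a noncommensurable pair of standard arithmetic hyperbolic $n$-orbifolds $M_1, M_2$ with a common field of definition, which can be done by picking quadratic forms $q_1, q_2$ as in Example \ref{hyperbolic6} (for $n = 5$) or more generally via the local-to-global constructions of Theorems \ref{finiterepodd}, \ref{finiterepeven1}, and \ref{finiterepeven2}: choose a totally real number field $k$ and two ordered $m$-dimensional $k$-forms (where $m = n+1$) that are isometric at every infinite place but differ in Hasse invariant at a single finite place $v_0$. By Proposition \ref{commimpliesisothm}, the resulting orbifolds $M_i := M_{q_i}$ are noncommensurable.

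Since $k(M_1)\cong k(M_2)$, Theorem \ref{thrmD}(1) applies and yields that for every $j\in\N$ with $1 < j < n-2$ the commensurability classes of $j$-dimensional finite-volume totally geodesic subspaces of $M_1$ and $M_2$ coincide. For $n \ge 5$ the value $j = 2$ lies in this range, so every commensurability class of totally geodesic surface is shared, completing the case $n \ge 5$.

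The case $n = 4$ is the main obstacle: the interval $(1, n-2)$ is empty, so Theorem \ref{thrmD} gives no direct information about surfaces. Here I would argue by refining Proposition \ref{allin} at the borderline form-dimension $j = m - 2 = 3$. Starting from a noncommensurable pair $q_1, q_2$ over $\Q$ (for example the forms of Example \ref{hyerbolic5}) and an arbitrary $3$-dimensional subform $r \subset q_1$, my goal is to produce $\lambda \in \Q^\times$ and a $2$-dimensional form $t$ with $q_2 \cong \lambda r \oplus t$. Because $\dim r = 3$ is odd, Proposition \ref{simparamgrp} guarantees $\lambda r$ and $r$ are isogroupic, and hence give commensurable surfaces in their respective orbifolds. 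The only obstruction to the local existence of $t$ is the exceptional restriction in Theorem \ref{localexistence}(3) when $\dim t = 2$ and $\det t = -1$; I would invoke the Square Existence Lemma (Lemma \ref{squareexistence}) to choose $\lambda$ so that at the single finite place $v_0$ where $q_1$ and $q_2$ differ this exception is avoided, while preserving signatures at each archimedean place. The global $t$ is then assembled via Theorems \ref{localglobal} and \ref{localglobaluniqueness}, exactly as in the proof of Proposition \ref{allin}. The delicate point is precisely that $j = 3$ sits on the boundary excluded by that proposition, so the flexibility of rescaling $3$-dimensional forms must be used explicitly rather than by direct quotation.
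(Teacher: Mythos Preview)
For $n\ge 5$ your argument coincides with the paper's: the corollary is stated there as an ``immediate consequence'' of the preceding work, meaning Theorem~\ref{thrmD} and Proposition~\ref{allin}, and you invoke them in the same way. You are also right to flag that Theorem~\ref{thrmD} as stated says nothing about surfaces when $n=4$ (the range $1<j<n-2$ is empty), so the paper's one-line justification does not literally cover that case; supplying an argument there goes beyond what the paper writes.

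Your idea for $n=4$ --- exploit that a $3$-dimensional subform can be rescaled without changing its isogroupy class (Proposition~\ref{simparamgrp}) --- is the correct one, but the execution has a real gap. You propose to choose $\lambda$ so that the exceptional restriction is avoided ``at the single finite place $v_0$ where $q_1$ and $q_2$ differ.'' Two problems: first, by the product formula for the Hasse invariant the forms of Example~\ref{hyerbolic5} differ at an even number of finite places, so $v_0$ is never unique. Second and more seriously, rescaling $r$ by $\lambda$ alters $\lambda r_v$ at \emph{every} place, so even at a place $v$ where $q_{1,v}\cong q_{2,v}$ and $r_v$ embeds, the rescaled form $\lambda r_v$ may fail to embed in $q_{2,v}$. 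Fixing $v_0$ can break other places, and you give no argument that it does not.

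The repair is to note that the local obstruction factors: at a finite place $v$, the complement $t_v$ fails to exist precisely when $\lambda\equiv -\det q_2\det r \pmod{(k_v^\times)^2}$ \emph{and} $c_v(q_2)\,c_v(r)\,(\det r,-1)_v=-1$. The second condition is independent of $\lambda$ and holds at only finitely many places; Lemma~\ref{squareexistence} then lets you choose a global $\lambda$ that is positive at the real place and avoids the forbidden square class at each of those finitely many bad places. With this correction your $n=4$ argument goes through.
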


We conclude this section by addressing the following question was posed to us by Jean-Fran\c{c}ois Lafont:
\textit{If $M_1$ and $M_2$ are good Riemannian orbifolds, when is it the case that $\Q TG(M_1)\subset \Q TG(M_2)$ implies $M_1\subset M_2$?}
We can answer this question for standard arithmetic hyperbolic orbifolds.  

\begin{prop}
Let $M_1$ and $M_2$ be standard arithmetic hyperbolic spaces.  
\begin{enumerate}
\item If $3 \le \dim M_1 \le \dim M_2-3$ and $\Q TG(M_1)\subset \Q TG(M_2)$, then, $M_1$ is commensurable to a totally geodesic subspace of $M_2$.
\item If $3 \le  \dim M_2-2\le \dim M_1$, there exist examples for which $\Q TG(M_1)  \subset \Q TG(M_2)$ but $M_1$ is not commensurable to a totally geodesic subspace of $M_2$
\end{enumerate}
\end{prop}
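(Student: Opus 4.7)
The argument separates into the two parts.

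\textbf{Part (1).} Write $M_i = M_{q_i}$ where $q_i$ is a quadratic form of dimension $m_i = \dim M_i + 1$ over a totally real number field $k_i$, isotropic at a unique real place $v_i$; note $m_1 \le m_2 - 3$. The plan is first to identify $k_1$ with $k_2$ in a way that matches $v_1$ with $v_2$, and then to embed $q_1$ as a subform of $q_2$ (after twist) by a local-to-global construction. For the field step, pick a proper subform $r \subset q_1$ isotropic at $v_1$ and for which $\mathbf{SO}(r)$ is absolutely almost simple (codimension $1$ suffices except when $m_1 = 5$, in which case take codimension $2$ to avoid the $D_2$ case). Then $N_r \in \Q TG(M_1) \subset \Q TG(M_2)$, and by Proposition \ref{subformsubspaces} there is a subform $r' \subset q_2$ such that $N_r$ and $N_{r'}$ are commensurable; Proposition \ref{commimpliesisothm} then gives that $R_{k_1/\Q}(\mathbf{SO}(r))$ and $R_{k_2/\Q}(\mathbf{SO}(r'))$ are $\Q$-isogenous, and since $\dim \mathbf{SO}(r) = \dim \mathbf{SO}(r')$, Lemma \ref{fielddefdim}(2) supplies a field isomorphism $\tau: k_1 \to k_2$. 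Because $r$ (resp.\ $r'$) inherits its unique isotropic real place from $q_1$ (resp.\ $q_2$), $\tau$ is forced to carry $v_1 \mapsto v_2$; identify $k := k_1 = k_2$ and $v_0 := v_1 = v_2$ via $\tau$.

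For the second step, use Lemma \ref{squareexistence} to choose $\lambda \in k^\times$ so that $\lambda q_2$ and $q_1$ have matching definite signs at every real place $w \ne v_0$, and replace $q_2$ by $\lambda q_2$ (the commensurability class of $M_2$ is unaffected). At each real place $w$, set $t_w := (q_1 \otimes k_w)^\perp \subset q_2 \otimes k_w$, a well-defined $(m_2 - m_1)$-dimensional definite form. At each finite place $v$ prescribe
\[
\dim t_v = m_2 - m_1 \ge 3, \quad \det t_v = \det q_2 / \det q_1, \quad c_v(t_v) = c_v(q_1)\,c_v(q_2)\,(\det q_1, \det q_2/\det q_1)_v,
\]
which is realized by some $k_v$-form via Theorem \ref{localexistence}(3), the exceptional restriction being avoided since $\dim t_v \ge 3$. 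The Hasse product $\prod_{v \in V_k} c_v(t_v) = \prod_v c_v(q_1) \prod_v c_v(q_2) \prod_v (\det q_1, \det q_2/\det q_1)_v$ equals $1$ by the product formulas for the Hasse invariant and the Hilbert symbol, so Theorem \ref{localglobal} yields a global $t$ over $k$ with $t \otimes k_v \cong t_v$ at every place. Then $q_1 \oplus t$ and $q_2$ share all local invariants (Theorem \ref{localuniqueness}), and Theorem \ref{localglobaluniqueness} upgrades this to a global isometry $q_1 \oplus t \cong q_2$. Hence $q_1$ is a subform of $q_2$, and Construction \ref{subformsubspacedef} realizes $M_1$ as a subform subspace of $M_2$ up to commensurability.

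\textbf{Part (2).} For this we appeal to Example \ref{interestingex}, which constructs a hyperbolic $3$-orbifold $N$ and a hyperbolic $5$-orbifold $M$ for which every totally geodesic surface in $N$ is commensurable to a totally geodesic surface in $M$, yet $N$ is not commensurable to any totally geodesic subspace of $M$. The nonflat finite-volume totally geodesic subspaces of a hyperbolic $3$-orbifold are exactly its totally geodesic surfaces, so $\Q TG(N) \subset \Q TG(M)$; and $\dim N = 3 = \dim M - 2$ places this within the prescribed regime $3 \le \dim M_2 - 2 \le \dim M_1$.

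The principal obstacle in Part (1) is ensuring $\tau(v_1) = v_2$ in the field-identification step: scalar twists preserve isotropy at every place, so a mismatch there cannot be corrected later and would block every embedding of $q_1$ into a twist of $q_2$. Fortunately the matching is automatic, because $r$ and $r'$ each have a unique isotropic real place inherited from the ambient form, and these unique places must correspond under the $k$-isomorphism $\mathbf{SO}(r) \cong \mathbf{SO}(r')$ arising from the commensurability of $N_r$ and $N_{r'}$.
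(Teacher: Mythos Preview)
Your proof is correct and follows the same overall strategy as the paper: identify the fields via a shared subform subspace, then embed $q_1$ into $q_2$ by a local-to-global construction of the complement. The paper is simply more terse: for the field step it invokes Proposition~\ref{subformsubspaces}(i) directly (since $N_r$ is a subform subspace of $M_1$ one has $k(N_r)=k_1$ by construction, and the same proposition applied to $M_2$ gives $k(N_r)=k_2$), bypassing your detour through Lemma~\ref{fielddefdim}; and for the embedding step it simply cites Proposition~\ref{allin}, whose proof you have essentially reproduced inline.
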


\begin{proof}
We begin by showing part $(i)$. 
By assumption, every totally geodesic surface of $M_1$ is totally geodesic in  $M_2$.
Proposition \ref{subformsubspaces} implies that $k(M_1)= k(M_2)=:k$.  
Let  $q_i$ be quadratic forms over $k$ which give rise to $M_i$ that are isotropic at the same real place of $k$.   
Then by Proposition \ref{allin}, it follows that $q_1$ is a subform of $q_2$ and $(i)$ follows. 
Part $(ii)$ follows from Example \ref{interestingex} below.
\end{proof}


\begin{ex}\label{interestingex}
Consider following quadratic forms over $\Q$ described in Example \ref{hyperbolic6}:  
$$q_1=\langle 1,1, 1, -5\rangle \qquad \mbox{and}  \qquad q_2=\langle 1, 1, 1, 3, 3, -1\rangle. $$
By Theorem \ref{finiterepeven2} the 3-dimensional hyperbolic space $M_{q_1}$ is not commensurable to a totally geodesic subspace of the 5-dimensional space $M_{q_2}$, yet by Proposition \ref{allin}, they contain precisely the same totally geodesic surfaces.  
\end{ex}



\section{Nonstandard Arithmetic Hyperbolic Manifolds\\ and Proofs of Theorems \ref{thrmE} and \ref{thrmF}}\label{sectionmaind}

The arithmetic lattices in groups of type $D_n$ that arise from skew Hermitian forms over division algebras over number fields we call \textbf{nonstandard lattices}.
For the algebraic theory of skew Hermitian forms, we refer the reader to \cite{Sch}.
In this section $m=2n\ge4$ and $\mathbb{H}$ denotes Hamilton's quaternions over $\R$.

\begin{con}\label{hermcon}Fix the following notation:
\begin{enumerate}
\item  $k$ is a number field with infinite places $V_k^\infty$;
\item  $D$ is a quaternion division algebra with center $k$;
\item  $(V,h)$ is an $n$-dimensional skew Hermitian space over $D$, $n\ge2$;
\item  $\mathbf{G}:=\mathbf{SU}(V,h)$ is the absolutely almost simple $k$-group defined by $(V,h)$ 
\item For each $v\in V_k^\infty$,   $V_{k_{v}}:=V\otimes_kk_{v}$, $q_{v}:=q\otimes k_{v}$, and $\mathbf{G}_{v}$ is the algebraic $k_{v}$-group $\mathbf{SU}(V_{k_{v}},q_{v})$
\begin{itemize}
\item If $v$ is real, and $D$ ramifies over $k_v$, then $\mathbf{G}_{v}(k_v)\cong \mathbf{SO}_{n}(\mathbb{H})$.  
\item If $v$ is real, and $D$ splits over $k_v$, then $\mathbf{G}_{v}(k_v)\cong \mathbf{SO}(m_+^{(v)}, m_-^{(v)})$.  
\item If $v$ is complex, then $\mathbf{G}_{v}(k_v)\cong \mathbf{SO}_{2n}(\C)$. 
		\item $q$ is the number of real places where $D$ ramifies,  
		\item $r$ is the number of real places where $D$ splits and $h$ is isotropic,  
		\item $s$ is the number of complex places, and
		\item $p_i=m_+^{(v_i)}$ where $\{v_1, \ldots, v_r\}$ is the set  of real places where $D$ splits and $h$ is isotropic;
\end{itemize}
\item $G':=(R_{k/\Q}\mathbf{G})(\R)$, $G$ is the projection of $G'$ and onto its noncompact factors, and $\pi:G'\to G$ is the projection map.
 Observe that $G$ is a semisimple Lie group with no compact factors and is $\R$-simple when $q+r+s=1$.
\item  $\mathcal{O}_D$ is an order in $D$, $L\subset V$ is an $\mathcal{O}_D$-lattice,  and $G_L:=\{T\in \mathbf{G}(k) \ | \ T(L)\subset L\}$, which is a discrete subgroup of $G'$
\item $\Gamma\subset G$ is commensurable up to $G$-automorphism with $\pi(G_L)$.  
Then $\Gamma$ is said to be a \textbf{nonstandard arithmetic lattice of $G$}.  
Figure \ref{figure:nonstandarddiagram}   summarizes this construction.
\begin{figure}[htbp]
\begin{tikzpicture}[scale=1.45]
\node (B) at (4.5,1) {$G':=\left(\displaystyle\prod\limits_{\substack{v \ \mathrm{real} \\ D\ \mathrm{ramifies}}} \mathbf{SO}_n(\mathbb{H}) \quad \times \displaystyle\prod\limits_{\substack{v \ \mathrm{real} \\ D\ \mathrm{splits}}} \mathbf{SO}(m_+^{(v)}, m_-^{(v)}) \quad \times \displaystyle\prod\limits_{v\ \mathrm{complex}}\mathbf{SO}_{2n}(\C)\right)\qquad$ };
\node (C) at (-2.5,1) {$G_L$};
\node (D) at (4.5,-1.5) {$G:= (\mathbf{SO}_n(\mathbb{H}))^q\times \displaystyle\prod\limits_{i=1}^r \mathbf{SO}(p_i,2n-p_i) \times (\mathbf{SO}_{2n}(\C))^s$};
\node (E) at (-2.5,-1.5) {$\Gamma$};
\path[right hook->,font=\scriptsize,>=angle 90]
(C) edge node[above]{diagonal} (B)
(E) edge node[above]{Commensurable (up to $G$-automorphism)} (D)
(E) edge node[below]{with $\pi(G_L)$} (D);
\path[->,font=\scriptsize,>=angle 90]
(C) edge node[below]{lattice} (D)
(B) edge node[right]{$\pi$} (D);
\end{tikzpicture}
\vspace{-2.0pc}
		\caption{Construction of Nonstandard arithmetic lattices in $G$.}
		\label{figure:nonstandarddiagram}
\end{figure}	
\item   $K\subset G$ is a maximal compact subgroup and $M_{\Gamma}:=\Gamma \backslash G /K$.  Then
\begin{enumerate}
	\item $M_\Gamma$ is a \textbf{nonstandard arithmetic locally symmetric space of $D_n$}, 
	\label{fieldalgebradef}
	\item $k(M_\Gamma):=k$ is the \textbf{field of definition} of $M_{\Gamma}$, and
	\item $D(M_\Gamma):=D$ is the \textbf{algebra of definition} of $M_{\Gamma}$.
\end{enumerate} 
\end{enumerate}
\end{con}
A choice of another order in $D$ and another lattice in $V$ will produce a space commensurable with $M_{L}$. 
Hence choosing $h$ determines a commensurability class that we denote by $M_h$.  
More on this construction can be found in \cite[\S2]{LM}.
Just as for quadratic forms (Construction \ref{subformsubspacedef}), skew Hermitian subforms $h'\subset h$ give rise to commensurability classes totally geodesic subspaces $N_{h'}\subset M_h$, which we also call subform subspaces.

\begin{Def}
We call $(k,D,h)$ an \textbf{admissible hyperbolic triple} if $M_h$ is a commensurability class of hyperbolic orbifolds, that is to say, 
\begin{enumerate}
\item $k$ is totally real (i.e., no $\mathbf{SO}_{2n}(\C)$ terms),
\item $D$ splits at all real places (i.e., no $\mathbf{SO}_{m}(\mathbb{H})$ terms),
\item $h$ is anisotropic at all but one real place, $v_0$ (i.e., only one $\mathbf{SO}(p_i, 2n-p_i)$ term),
\item $q\otimes k_{v_0}$ has signature $(2n-1,1)$ or $(1,2n-1)$.  (i.e., $G\cong\mathbf{SO}(2n-1,1)$).
\end{enumerate}
\end{Def}

\begin{proof}[Proof of Theorem \ref{thrmE}] 
If $\dim M_1\ne \dim M_2$, it is not hard to find a subform subspace of one that is not commensurable to a proper totally geodesic subspace of the other, so suppose $\dim M_1= \dim M_2$.
Then there exists a $2n$-dimensional quadratic form over a number field $k_1$ and an $n$-dimensional skew Hermitian form over a number field $k_2$ giving rise to $M_1$ and $M_2$ respectively.
Furthermore, as we have already seen, we may choose a codimension one subform $r\subset q$ that is isotropic at the same real places as $q$.
Suppose that $N_r\in \Q TG(M_2)$.
By Proposition \ref{commimpliesisothm},  $R_{k_1/\Q}\mathbf{SO}(r)$ is $\Q$-isogenous to a $\Q$-subgroup of $R_{k_2/\Q}\mathbf{SU}(h)$.
Observe that, when $n\ge 4$, 
$$\dim \mathbf{SU}(h)=n(2n-1)<(2n-1)(2n-2)=2\left(\frac{(2n-1)(2n-2)}{2}\right)=2\dim \mathbf{SO}(r),$$
hence by Proposition \ref{fielddefcontain}, $k_1$ and $k_2$ are isomorphic. 
Let $k$ be a fixed representative of this isomorphism class and replace $h$ with a twist so that $q$ and $h$ are forms over $k$ that are isotropic at the same infinite place.
If $v\in V_k$ is a finite place where $D$ ramifies, then  
$$\mathrm{rank}_{k_v}(\mathbf{SU}(h)) \le \frac{n}{2} \le n-2 \le \mathrm{rank}_{k_v}(\mathbf{SO}(r)).$$
By local rank considerations $\mathbf{SO}(r)$ cannot be a subgroup of $\mathbf{SU}(h)$.  
\end{proof}

\begin{proof}[Proof of Theorem \ref{thrmF}] 
Let $\mathbf{G}_i=\mathbf{SU}(h_i)$, $i=1,2$, be groups giving rise to $M_i$, where $h_i$ is an $n$-dimensional skew Hermitian form over $D_i$ .
Let $r$ be an $(n-1)$-dimensional Hermitian subform of $h_1$ which is isotropic at the real place $h_1$ is isotropic.  
Let $\mathbf{H}_1:=\mathbf{SU}(r)$.  
By assumption, the subform subspace $N_r\in \Q TG(M_2)$.
By Proposition \ref{commimpliesisothm},  $R_{k_1/\Q}\mathbf{H}_1$ is $\Q$-isogenous to a $\Q$-subgroup of $R_{k_2/\Q}\mathbf{G}_2$.
When $n\ge 4$, 
$$\dim \mathbf{SU}(h_2)=n(2n-1)<2(n-1)(2n-3)=2\dim \mathbf{SU}(r),$$
and hence by Proposition \ref{fielddefcontain}, $k_1$ and $k_2$ are isomorphic.  
Let $k$ be a fixed representative of this isomorphism class and replace $h_2$ with a twist so that $h_1$ and $h_2$ are forms over $k$ that are isotropic at the same infinite place and $D_1$ and $D_2$ are quaternion algebras over $k$.
Suppose that $D_1$ and $D_2$ are not isomorphic.
Then there is a finite 
place $v\in V_k$ where one splits and the other ramifies.  
After relabeling if necessary, we may assume $D_1$ splits and $D_2$ ramifies.  
When $n\ge 4$, $$\mathrm{rank}_{k_v}(\mathbf{G}_2) \le \frac{n}{2} \le n-2 \le \mathrm{rank}_{k_v}(\mathbf{H}).$$
Hence again by local rank considerations $\mathbf{H}$ cannot be a subgroup of $\mathbf{G}_2$ and the result follows.
\end{proof}

\begin{ques}\label{conjecturedn}
Let $M_1$ and $M_2$ be $\R$-simple, nonstandard arithmetic, locally symmetric spaces.
Does $\Q TG(M_1)= \Q TG(M_2)$ imply $M_1$ and $M_2$ are commensurable?
\end{ques}

Question \ref{conjecturedn} remains open.  
The primary obstacle to answering this question is the lack of local and global existence theorems for skew Hermitian forms over division algebras.  
Answering this would complete the analysis of the rational totally geodesic spectrum for $\R$-simple arithmetic spaces of type $D_n$ for all $n\ge 2$ not arising from triality in $D_4$.


\section*{Acknowledgments}\label{sectionack}
We thank Matthew Stover  for suggesting this problem and for countless helpful conversations.  
The author was supported by the NSF RTG grant 1045119.  
We would like to also thank Jean-Fran\c{c}ois Lafont, Lucy Lifschitz, Benjamin Linowitz, D. B. McReynolds, and Ralf Spatzier for their interest in this project and for many valuable and interesting discussions.

\appendix


\section{Machlachlan's Theorem: Parametrizing Commensurability Classes}\label{sectionmaclachlan}

In this section, we show how the techniques of Section \ref{sectiontits} may be used to parametrize commensurability classes of even dimensional arithmetic hyperbolic orbifolds.  
In so doing, we provide an alternate proof of the results of Maclachlan \cite{Mac},
whose proof uses techniques from the theory of quaternion algebras and Clifford algebras.  

\begin{thm}[Maclachlan \cite{Mac} Theorem 1.1]\label{macthrm}
The commensurability classes of arithmetic subgroups of $\mathrm{Isom}(\mathbb{H}^{2n})$, $n\ge 1$, are parametrized for each totally real number field $k\subset \R$ by sets $\{\mathfrak{p}_1, \mathfrak{p}_2, \ldots, \mathfrak{p}_r\}$ of prime ideals in the ring of integers $\mathcal{O}_k$ where 
\begin{align}\label{paramequation}r\equiv 
\begin{cases} 
0\ (\mathrm{mod}\ 2) & \mbox{if } n\equiv 0\ (\mathrm{mod}\ 4),\\
[k:\Q]-1\ (\mathrm{mod}\ 2) & \mbox{if } n\equiv 1\ (\mathrm{mod}\ 4),\\
[k:\Q]\ (\mathrm{mod}\ 2) & \mbox{if } n\equiv 2\ (\mathrm{mod}\ 4),\\
1\ (\mathrm{mod}\ 2) & \mbox{if } n\equiv 3\ (\mathrm{mod}\ 4).
\end{cases}\end{align}
\end{thm}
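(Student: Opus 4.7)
The plan is to describe commensurability classes of arithmetic subgroups of $\mathrm{Isom}(\mathbb{H}^{2n})$ via the $k$-isomorphism type of an associated group $\mathbf{SO}(q)$, and then to classify these types by their local Tits indices.

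Since $m=2n+1\ge 3$ is odd, every arithmetic lattice in $\mathrm{Isom}(\mathbb{H}^{2n})$ arises from Construction \ref{quadcon} applied to an admissible hyperbolic pair $(k,q)$. Rescaling by a similarity (with signs at the real places chosen via Lemma \ref{squareexistence}), I may assume the signature of $q$ is $(2n,1)$ at the isotropic real place $v_0$ and $(2n+1,0)$ at every other real place. By Propositions \ref{simparamgrp} and \ref{titsdeterminesbn}, the $k$-isomorphism class of $\mathbf{SO}(q)$, and hence the commensurability class of $M_q$, is determined by the local Tits indices of $\mathbf{SO}(q_v)$ over $k_v$. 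The hyperbolic condition fixes these indices at all archimedean places, while at each finite $v$ the only possibilities are the split $B_{n,n}$ and the nonsplit $B_{n,n-1}$. Hence commensurability classes correspond bijectively to pairs $(k,S)$, where $k$ is a totally real number field and $S=\{\mathfrak{p}_1,\dots,\mathfrak{p}_r\}$ is the finite set of primes at which $\mathbf{SO}(q_v)$ is of type $B_{n,n-1}$.

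The parity condition on $r$ then follows from the global product formula $\prod_v c_v(q)=1$. With the normalization above, every $c_v(q)=1$ at a real place, and the signature forces $\det q$ to be negative at $v_0$ and positive at all other real places. By formula \eqref{formula1}, at each finite $v$ one has
\[
c_v(q) = \epsilon_v\,(-1,-1)_v^{n(n-3)/2}\,(-1,\det q)_v^n, \qquad \epsilon_v=\begin{cases} +1,& v\notin S,\\ -1,& v\in S.\end{cases}
\]
Inserting this into $\prod_v c_v(q)=1$ and applying Hilbert reciprocity to the symbols $(-1,-1)$ and $(-1,\det q)$ to convert the products over finite places to products over archimedean places—whose contributions are $(-1)^{[k:\Q]}$ (each real place contributes the Hamilton quaternion class) and $-1$ (only $v_0$ contributes), respectively—yields
\[
(-1)^r \;=\; (-1)^{[k:\Q]\,n(n-3)/2+n},
\]
and reducing modulo $4$ in $n$ recovers the four cases of \eqref{paramequation}.

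Conversely, given a totally real $k$ and a set $S$ satisfying the parity relation, I choose $d\in k^\times/(k^\times)^2$ negative at $v_0$ and positive elsewhere (Lemma \ref{squareexistence}), and prescribe local forms of dimension $2n+1$, determinant $d$, and Hasse invariant dictated by whether $v\in S$; the parity condition is exactly the global compatibility $\prod_v c_v=1$ required by Theorem \ref{localglobal}, producing a global form $q$ with the prescribed local Tits indices. Uniqueness of the commensurability class attached to $(k,S)$ is then immediate from Proposition \ref{titsdeterminesbn}. The main obstacle is the bookkeeping in the parity computation—tracking signs in the Hilbert-symbol products across the four residue classes of $n$ modulo $4$—after which the result follows directly from the quadratic-form machinery already developed in Sections \ref{sectionquadforms} and \ref{sectiontits}.
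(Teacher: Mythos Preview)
Your argument is correct and follows the same overall strategy as the paper: reduce commensurability classes to local Tits indices of $\mathbf{SO}(q)$ at the finite places, then extract the parity constraint on $r$ from the product formula $\prod_v c_v(q)=1$, and finally invoke Theorem~\ref{localglobal} for existence. The one substantive difference is in normalization and bookkeeping. The paper fixes $\det q=1$, which forces signature $(1,2n)$ at the isotropic place and makes the real Hasse invariants contribute $(-1)^n$; it then splits the finite places according to whether $(-1,-1)_v=\pm 1$ and invokes the auxiliary Lemma~\ref{dyadlemma} on dyadic places to relate $\delta(k)$ to $[k:\Q]$. You instead fix the signatures so that all real Hasse invariants are trivial, let $\det q$ float, and apply Hilbert reciprocity directly to the global symbols $(-1,-1)$ and $(-1,\det q)$. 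Your route is slightly more streamlined---it bypasses Lemma~\ref{dyadlemma} and the $V_k^s/V_k^r$ case split---at the cost of carrying the extra factor $(-1,\det q)_v^n$ through the computation; both yield the same exponent $[k:\Q]\,n(n-3)/2+n$ and hence the same parity table.
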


A place $v\in V_k$ is called \textbf{dyadic} if $k_v$ is nonarchimedean with residue field of characteristic 2.  

\begin{lem}\label{dyadlemma}
Let $k$ be a totally real number field and define
$$\delta(k):=\left\{\mbox{number of dyadic places where $\left(\frac{-1,-1}{\Q}\right)$ ramifies}\right\}.$$Then $\delta(k)\equiv [k:\Q] \ (\mathrm{mod}\ 2)$.  
\end{lem}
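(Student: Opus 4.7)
The plan is to prove this via the product formula for Hilbert symbols, together with a local computation that restricts where the symbol $(-1,-1)_v$ can be nontrivial.

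First, I would unpack the definition. The quaternion algebra $\left(\frac{-1,-1}{\Q}\right)$ ramifies at a place $v \in V_k$ precisely when its base change to $k_v$ is a division algebra, which in turn is equivalent to the Hilbert symbol $(-1,-1)_{k_v}$ being $-1$. So $\delta(k)$ counts dyadic $v \in V_k$ with $(-1,-1)_{k_v} = -1$.

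Next, I would classify the places of $k$ according to the value of $(-1,-1)_{k_v}$. Since $k$ is totally real, there are $[k:\Q]$ real places, and at each real place $k_v \cong \R$, so $(-1,-1)_{k_v} = -1$. There are no complex places. At finite places, I would invoke the standard local computation: for a non-dyadic finite place $v$, both $-1$ and $-1$ are units in $\mathcal{O}_{k_v}$, and for non-dyadic $k_v$ the Hilbert symbol of two units vanishes, so $(-1,-1)_{k_v} = 1$. Therefore the places where $(-1,-1)_{k_v} = -1$ are exactly the real places (contributing $[k:\Q]$) together with the dyadic places counted by $\delta(k)$.

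Finally, I would apply the Hilbert reciprocity law (the product formula)
\begin{equation*}
\prod_{v \in V_k} (-1,-1)_{k_v} = 1,
\end{equation*}
which forces the total number of places with $(-1,-1)_{k_v} = -1$ to be even. Combining with the count above, $[k:\Q] + \delta(k) \equiv 0 \pmod{2}$, and the conclusion $\delta(k) \equiv [k:\Q] \pmod{2}$ follows immediately.

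There is no real obstacle here; the only subtlety worth double-checking is the assertion that $(-1,-1)_{k_v} = 1$ at every non-dyadic finite place, which is a standard fact about the Hilbert symbol of units in local fields of odd residue characteristic (as in \cite{OM} or \cite{Lam}). Everything else is an application of the product formula.
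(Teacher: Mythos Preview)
Your proof is correct and follows essentially the same approach as the paper's: both identify that $\left(\frac{-1,-1}{k}\right)$ ramifies exactly at the $[k:\Q]$ real places and at the $\delta(k)$ dyadic places, and then invoke the parity constraint (you via the Hilbert reciprocity product formula, the paper via the equivalent statement that a quaternion algebra ramifies at an even number of places). The only cosmetic difference is that the paper obtains the non-dyadic vanishing by first noting that Hamilton's quaternions over $\Q$ ramify only at $2$ and $\infty$ and then base-changing, whereas you compute $(-1,-1)_{k_v}=1$ directly at each non-dyadic place; the content is the same.
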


\begin{proof}
Over $\Q$, Hamilton's quaternions ramify at precisely 2 and $\infty$.   
Hence over $k$, Hamilton's quaternions ramify at precisely $\delta(k)$ places over $2$, $[k:\Q]$ places over $\infty$, and nowhere else.  
Since a quaternion algebra ramifies at an even number of places, the result follows.
\end{proof}

\begin{proof}[Proof of Theorem \ref{macthrm}]
We shall show that $k$-isomorphism classes of groups giving rise to standard arithmetic hyperbolic manifolds are parametrized by sets of the form $(v,  \{\mathfrak{p}_1, \mathfrak{p}_2, \ldots, \mathfrak{p}_r\})$
where $v\in V_k$ is a real place and $\{\mathfrak{p}_1, \mathfrak{p}_2, \ldots, \mathfrak{p}_r\}$ is a set of prime ideals  satisfying  \eqref{paramequation}.  
The theorem then follows from \cite[Prop 2.5]{PR} and our remarks after Construction \ref{quadcon} regarding twists of quadratic forms.

By Proposition \ref{simparamgrp}, similarity classes of $(2n+1)$-dimensional quadratic forms over $k$ parametrize groups of type $B_n$ over $k$.  
Picking the determinant 1 representative of each similarity class, the set
$$\mathcal{F}:=\{ q \ | \ \dim q = 2n+1, \det q = 1, \mbox{  and $(k,q)$ is an admissible hyperbolic pair}\}$$
parametrizes $k$-isomorphism classes of groups giving rise to arithmetic hyperbolic $2n$-orbifolds.
Let $v_1, \ldots v_\ell$ denote the real embeddings of $k$.    
For a fixed $v_i$, for $1\le i\le \ell$, let  $$\mathcal{F}_i:=\{q\in \mathcal{F}\ |\ \mbox{$q$ is isotropic at $v_i$}\}.$$ 

For $q\in \mathcal{F}_i$, the fact that $\det q=1$ now implies that $q$ has signature $(1, 2n)$ at $v_i$ and signature $(2n+1,0)$ at all other real places.  
A basic computation shows that the Hasse invariants at the real places are
$$c_{v_{j}}(q)=\begin{cases} (-1)^n & i=j\\ 1&i\ne j.\end{cases}$$

Let $V_k^s=\{ v\in V_k \ | \ (-1,-1)_v=+1\}$ and $V_k^r=\{ v\in V_k \ | \ (-1,-1)_v=-1\} $.  
These sets correspond to the finite places where Hamilton's quaternions split
and ramify, respectively.
For $q\in \mathcal{F}_i$, let $e_s(q)$ (resp. $e_r(q)$) denote the number of finite places in $V_k^s$ (resp. $V_k^r$) where $\mathbf{SO}(q)$ is not split.  
Clearly $r(q):=e_s(q)+e_r(q)$ is the total number of finite places where $\mathbf{SO}(q)$ is not split.  
(Note that this is always finite because any $k$-group is quasi-split at all but finitely many places and quasi-split groups of type $B_n$ are split.)

Since $q$ has determinant 1, \eqref{formula1} may be simplified to state that $\mathbf{SO}(q)$ splits over $v$ if and only if 
$c_v(q)=(-1,-1)_v^{\frac{n(n-3)}{2}}.$
Let $f_s(q)$ (resp. $f_r(q)$) denote the number of finite places $v$ in $V_k^s$ (resp. $V_k^r$) where $c_v(q)=-1$.  If as in Lemma \ref{dyadlemma}, $\delta(k)$ is the number of dyadic places where $\left(\frac{-1,-1}{\Q}\right)$ ramifies, then it follows that:
\begin{itemize}
\item $f_s(q)=e_s(q)$, and
\item $f_r(q)=\begin{cases}e_r(q)& \mbox{if }n\equiv 0,3 \ (\mathrm{mod}\ 4),\\  \delta(k)-e_r(q)  &  \mbox{if }n\equiv 1,2 \ (\mathrm{mod}\ 4).\\   \end{cases}$
\end{itemize}

By Theorem \ref{localglobal}, the local Hasse invariants of $q$ must satisfy the compatibility condition $\prod_{v\in V_k}c_v(q)=1$.   It follows that 
$$(-1)^n (-1)^{f_s(q)}(-1)^{f_r(q)}=1$$
and hence \begin{align}\label{formulaparam}n+f_s(q)+f_r(q) \equiv 0 \ (\mathrm{mod}\ 2).\end{align}
We now have the following four cases:
\begin{itemize}
\item \textbf{Case 1}:  $n\equiv 0\ (\mathrm{mod}\ 4)$\\
Equation \eqref{formulaparam} immediately gives $r(q)\equiv 0\ (\mathrm{mod}\ 2)$.
\item \textbf{Case 2}:  $n\equiv 1\ (\mathrm{mod}\ 4)$\\
Equation \eqref{formulaparam} gives 
$$n+e_s(q)+\delta(k)-e_r(q)\equiv 0\ (\mathrm{mod}\ 2).$$
By Lemma \ref{dyadlemma} and simplifying,
$$1+e_s(q)+[k:\Q]-e_r(q)\equiv 0\ (\mathrm{mod}\ 2),$$
and hence
$$r(q)\equiv[k:\Q]-1\ (\mathrm{mod}\ 2).$$
\item \textbf{Case 3}:  $n\equiv 2\ (\mathrm{mod}\ 4)$\\
Again using Lemma \ref{dyadlemma}, equation \eqref{formulaparam} gives
$$0+e_s(q)+[k:\Q]-e_r(q)\equiv 0\ (\mathrm{mod}\ 2),$$
and hence
$$r(q)\equiv[k:\Q]\ (\mathrm{mod}\ 2).$$
\item \textbf{Case 4}:  $n\equiv 3\ (\mathrm{mod}\ 4)$\\
Equation \eqref{formulaparam} immediately gives $r(q)\equiv 1\ (\mathrm{mod}\ 2)$.
\end{itemize}

We conclude that every form $q\in \mathcal{F}$ determines a set $(v_q, \{v_1, v_2,\ldots, v_{r(q)}\})$ where $v_q$ is the unique real place where $q$ is isotropic,  $\{v_1, v_2,\ldots, v_{r(q)}\}$ is the set of finite places where $\mathbf{SO}(q)$ is not split over $k_v$, and  $r(q)$ satisfies equation \eqref{paramequation}.
Furthermore, by the local-to-global uniqueness of Theorem \ref{localuniqueness}, no two forms $q,q'\in \mathcal{F}$ determine the same set of places.

Lastly we show that any collection $(v_0,  \{v_1, v_2, \ldots, v_r\})$ where $v_0\in V_k$ is a real place, $\{v_1, v_2, \ldots, v_r\}$ is a set of finite places, and $r$ satisfies equation \eqref{paramequation}, determines a form in $\mathcal{F}$.  
Let $\{q_v\}_{v\in V_k}$ be a family of $(2n+1)$-dimensional forms of determinant 1 satisfying the following:
\begin{itemize}
\item $q_{v_0}$ has signature $(1,2n)$,
\item $q_v$ has signature $(2n+1,0)$ at all other real places,
\item for $v\in V_k$ finite, $\mathbf{SO}(q_v)$ is not split if and only if $v\in \{v_1, v_2,\ldots, v_r\}$, and hence $c_v(q_v)$ is determined by equation \eqref{formula1}.
\end{itemize}
The above computations show that this family satisfies the compatibility condition of Theorem \ref{localglobal}, and hence there exists a global form $q\in \mathcal{F}$ with localizations $q_v$.   
By Construction \ref{quadcon}, we obtain a commensurability class and the result follows.
\end{proof}

With proper modification, these techniques may be used to rederive Maclachlan's parametrization of commensurability classes of odd dimensional standard arithmetic hyperbolic orbifolds \cite[Cor. 7.5]{Mac}.  
Furthermore, with additional modifications, these techniques are generalizable to give parameterizations of commensurability classes of standard arithmetic lattices in groups of type $B_n$ and $D_n$.

\end{document}